\numberwithin{equation}{section}
\newtheorem{theorem}{Theorem}[section]
\newtheorem{lemma}[theorem]{Lemma}
\newtheorem{corollary}[theorem]{Corollary}
\newtheorem{proposition}[theorem]{Proposition}
\theoremstyle{definition}
\newtheorem{definition}[theorem]{Definition}
\newtheorem{assumption}[theorem]{Assumption}
\newtheorem{example}[theorem]{Example}
\theoremstyle{remark}
\newtheorem{remark}[theorem]{Remark}
\begin{document}

\title[Spherical transonic shock]{Three dimensional spherical transonic shock in a hemispherical shell}

\author{Shangkun WENG}
\address{School of Mathematics and Statistics, Wuhan University, Wuhan, 430072, Hubei Province, China.}
\email{skweng@whu.edu.cn}


\keywords{Transonic shock, elliptic-hyperbolic mixed structure, spherical projection coordinates, deformation-curl decomposition, Rankine-Hugoniot conditions.}
\subjclass[2010]{35L67, 35M12, 76L05, 76H05, 76N10, 76N15}
\date{}
\maketitle

\def\be{\begin{eqnarray}}
\def\ee{\end{eqnarray}}
\def\ba{\begin{aligned}}
\def\ea{\end{aligned}}
\def\bay{\begin{array}}
\def\eay{\end{array}}
\def\bca{\begin{cases}}
\def\eca{\end{cases}}
\def\p{\partial}
\def\no{\nonumber}
\def\e{\epsilon}
\def\de{\delta}
\def\De{\Delta}
\def\om{\omega}
\def\Om{\Omega}
\def\f{\frac}
\def\th{\theta}
\def\la{\lambda}
\def\lab{\label}
\def\b{\bigg}
\def\var{\varphi}
\def\na{\nabla}
\def\ka{\kappa}
\def\al{\alpha}
\def\La{\Lambda}
\def\ga{\gamma}
\def\Ga{\Gamma}
\def\ti{\tilde}
\def\wti{\widetilde}
\def\wh{\widehat}
\def\ol{\overline}
\def\ul{\underline}
\def\Th{\Theta}
\def\si{\sigma}
\def\Si{\Sigma}
\def\oo{\infty}
\def\q{\quad}
\def\z{\zeta}
\def\co{\coloneqq}
\def\eqq{\eqqcolon}
\def\di{\displaystyle}
\def\bt{\begin{theorem}}
\def\et{\end{theorem}}
\def\bc{\begin{corollary}}
\def\ec{\end{corollary}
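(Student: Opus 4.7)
The plan is to reformulate the free-boundary transonic shock problem as a nonlinear fixed-point problem on a fixed hemispherical annulus, and then to solve the latter by an iteration scheme whose linearization decouples into first-order transport and second-order elliptic components via the deformation--curl decomposition advertised in the keywords.

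I would pull back the unknown shock surface to the background spherical shock by spherical projection coordinates, writing the shock as $r=r_{0}+\eta(\theta,\varphi)$, so that the downstream subsonic Euler system is posed on a fixed annulus with $\eta$ entering only as a coefficient. Starting from an iterate $(\mathbf{U}^{(k)},\eta^{(k)})$ close to the background radial transonic state, one produces $(\mathbf{U}^{(k+1)},\eta^{(k+1)})$ by: (i) computing the Bernoulli function and the entropy in the subsonic region from linear transport equations whose Cauchy data on the shock are fixed by the Rankine--Hugoniot conditions applied to the given supersonic upstream state and the previous shock position; (ii) solving a second-order elliptic boundary value problem for the velocity potential (or equivalently the radial velocity) carrying an oblique derivative condition coming from the tangential Rankine--Hugoniot relations on the fixed shock, a slip condition on the equatorial boundary of the hemispherical shell, and a prescribed exit pressure on the outer hemisphere; (iii) updating the shock position $\eta^{(k+1)}$ by enforcing the remaining scalar Rankine--Hugoniot relation, which plays the role of the solvability condition for the elliptic problem. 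Uniform $C^{2,\alpha}$ or $H^{s}$ estimates combined with a standard contraction argument in a small ball around the background radial solution would then close the scheme.

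The main difficulty will be rigorously decoupling the zero spherical-harmonic mode of the elliptic subproblem from the free-boundary correction $\eta$ and verifying the corresponding solvability condition: the background transonic shock is determined by the exit pressure only up to a radial shift, so $\eta^{(k+1)}$ is the unique degree of freedom available to absorb the orthogonality obstruction carried by the constant mode. Handling this together with the equatorial slip condition (nonstandard in the transonic shock literature, which typically treats full spherical shells or straight nozzles, and which forces one to restrict the spherical-harmonic expansion to modes symmetric under the equatorial reflection) is where the hemispherical geometry enters essentially and where the deformation--curl decomposition must be applied most carefully; the remaining ingredients, namely characteristic-based transport estimates along the nearly radial streamlines and standard elliptic regularity on a spherical annulus with oblique data, should be comparatively routine.
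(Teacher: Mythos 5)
Your overall architecture (fix the free boundary, transport the hyperbolic quantities from shock data, solve an elliptic problem for the velocity with an oblique condition, iterate and contract in $C^{2,\alpha}$) matches the paper's, but two of your central mechanisms are not the ones that make this problem work, and one of them would fail. First, you propose to determine the shock update $\eta^{(k+1)}$ as the solvability condition of the elliptic problem, on the grounds that the background shock is determined by the exit pressure "only up to a radial shift." In the divergent (spherical) setting this degeneracy does not exist: Proposition \ref{background} fixes $r_s$ uniquely from $P_e$, and the paper's elliptic problem \eqref{den41} carries an oblique condition $\p_{y_1}\phi-b_4\phi=m_s$ with $b_4=b_0b_1b_3>0$ of the correct sign, so it is uniquely solvable with \emph{no} orthogonality obstruction to absorb. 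The shock position is instead read off algebraically from the normal Rankine--Hugoniot relations, $W_6=b_1^{-1}W_1(r_s,\cdot)+\text{h.o.t.}$ (equation \eqref{shock400}). Second, you do not address the real difficulty in the jump conditions: the tangential Rankine--Hugoniot relations give \emph{both} components of $\nabla'\xi$ in terms of the trace of the velocity, i.e.\ two first-order equations for the single function $\xi$, which is overdetermined. The paper's key step (Lemma \ref{equi0}) converts the pair $F_2=F_3\equiv 0$ into a curl-free condition (yielding the boundary datum \eqref{shock17} for the first vorticity component on the shock), a divergence condition (yielding the unusual \emph{second-order} differential boundary condition \eqref{shock19} on the shock for the deformation--curl system), and a corner condition on $|y'|=1$. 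Without this reformulation your elliptic subproblem has no usable boundary condition on the shock front.

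Two further points. The downstream flow is rotational, so a velocity potential alone cannot represent it: the paper transports $\omega_1$ by \eqref{vor400}, recovers $\omega_2,\omega_3$ from \eqref{vor22}, solves a div--curl system with an auxiliary function $\Upsilon$ to restore the solvability of the curl part, and only the \emph{remainder} $M_j$ admits a potential $\phi$. Finally, the hemispherical geometry is not handled by restricting to equatorially symmetric spherical harmonics (which would be a genuine restriction on the data); instead one imposes the compatibility conditions \eqref{comp1} at the inlet, proves they propagate along streamlines and across the shock (Lemma \ref{c11}), and then uses even/odd reflection across $x_1=0$ to obtain $C^{2,\alpha}$ regularity up to the corner circles.
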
}
\def\bl{\begin{lemma}}
\def\el{\end{lemma}}
\def\bp{\begin{proposition}}
\def\ep{\end{proposition}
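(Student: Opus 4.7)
The plan is to reformulate the free boundary problem on a fixed reference domain by pinning down the unknown shock surface. Working in spherical projection coordinates $(r,\theta_1,\theta_2)$ adapted to the hemispherical shell, one writes the perturbed shock front as a graph $r=r_s+\xi(\theta_1,\theta_2)$ over the background transonic radius $r_s$, and pulls the downstream annulus back to the fixed slab $\{r_s<r<r_1\}$ by an affine radial stretching. This converts the problem into a quasilinear system on a fixed domain with $\xi$ appearing as an additional unknown coupled through the Rankine--Hugoniot jump conditions at $r=r_s$ and the prescribed exit data at $r=r_1$. The whole object is then the fixed point of a carefully designed nonlinear iteration around the spherically symmetric background transonic flow.

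The key technical device is the deformation--curl decomposition in the downstream subsonic region, applied in spherical coordinates. One rewrites the steady Euler system as a second order elliptic equation for the pressure (essentially the effective potential) coupled with transport equations along the perturbed streamlines for the Bernoulli function, the entropy, and the angular components of the vorticity. In the transformed coordinates the transport equations can be integrated from the shock toward $r=r_1$ along nearly radial characteristics, yielding $C^{1,\alpha}$ estimates in terms of the shock data, while the pressure equation is uniformly elliptic with coefficients that depend mildly on $\xi$ and on the transported quantities. Standard Schauder theory, combined with a good choice of boundary conditions on the hemispherical caps, then closes the estimate for the subsonic flow once $\xi$ and the Bernoulli/entropy profiles are frozen.

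The shock position $\xi$ itself is determined by a compatibility condition: integrating the elliptic pressure equation against the constant test function, and combining with the Rankine--Hugoniot relations and the prescribed exit pressure at $r=r_1$, yields a nonlocal, nonlinear equation for $\xi$ over the spherical cap. I would linearize this equation around $\xi\equiv 0$, identify the linearized operator as an elliptic pseudodifferential operator on the cap (essentially a zeroth order perturbation of a Laplace--Beltrami type operator), and invert it to produce an updated shock position. An implicit function theorem or contraction mapping argument, carried out in H\"older spaces with matching weights at the corner where the shock meets the lateral boundary, then yields a unique nearby solution.

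The hard part, I expect, will be twofold. First, one must verify that the linearized Rankine--Hugoniot operator, coupled to the trace of the downstream pressure, really is an isomorphism on the chosen function space; this is essentially the analogue of the well known ``supersonic side provides the Dirichlet data, subsonic side provides the solvability condition'' picture, and checking it in the three dimensional spherical setting requires a careful spectral analysis of the background transonic state. Second, one must control the corner compatibility where the free shock meets the hemispherical lateral boundary, so that the downstream solution has enough regularity to feed back into the iteration without loss of derivatives. Once both issues are handled, the nonlinear error terms are genuinely quadratic in the perturbation, and the contraction closes in a sufficiently small neighborhood of the background spherical transonic flow.
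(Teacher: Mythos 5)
Your outline reproduces the broad architecture of the paper (graph representation of the shock, affine stretching to a fixed slab, elliptic/hyperbolic splitting, iteration around the background flow), but the two steps that actually carry the proof are misstated in ways that would break the argument. First, the deformation--curl decomposition does \emph{not} produce ``a second order elliptic equation for the pressure'': it produces a first-order divergence--curl system for the \emph{velocity} (the divergence coming from the continuity equation with the density expressed through $B$ and $K$, the curl prescribed by the vorticity transport), which after homogenizing the curl by an auxiliary function $\Upsilon$ reduces to a second-order equation for a velocity potential $\phi$. This distinction is the whole point: the pressure-based second-order formulation is the older route that forces the pressure to be one derivative smoother than the velocity and requires $C^{3,\alpha}$ data, whereas the velocity formulation gives all quantities the same $C^{2,\alpha}$ regularity claimed in the theorem. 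Second, your mechanism for determining the shock front --- integrating the elliptic equation against the constant test function to get ``a nonlocal, nonlinear equation for $\xi$'' --- yields at best one scalar solvability condition and cannot determine the function $\xi(y')$. In the paper, $\xi$ is fixed \emph{pointwise} by the algebraic part of the Rankine--Hugoniot conditions ($V_1(\xi,\cdot)=b_1(\xi-r_s)+\cdots$ with $b_1>0$), while the two remaining jump conditions, which express $\nabla'\xi$ in terms of the tangential momentum fluxes, are converted by a div--curl equivalence lemma into (i) the boundary datum for the first vorticity component on the shock and (ii) an unusual \emph{second-order differential} boundary condition for $W_1$ on the shock, which after passing to the potential becomes an oblique (Robin-type) condition with a coefficient of favorable sign. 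It is precisely this sign structure ($b_1,b_2,b_4,d_4>0$) that lets one invert the linearized problem without the ``Structure Condition'' of earlier works; your appeal to ``a careful spectral analysis of the background transonic state'' leaves exactly this point open.

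Two further gaps. You treat ``spherical projection coordinates $(r,\theta_1,\theta_2)$'' as if they were ordinary spherical coordinates; but in spherical coordinates the Euler system carries singular terms ($\cot\theta$, $1/\sin\theta$) that obstruct solving the transport equations near the pole, and the paper's coordinates are the composition of spherical coordinates with the stereographic projection, chosen precisely so that the transformed system has no coordinate singularities and the hemispherical shell becomes a cylinder over a disc. And while you correctly flag the corner where the shock meets the lateral boundary, the resolution is not a matter of ``matching weights'': it relies on the special opening angle $\pi/2$, which makes the compatibility conditions imposed at the inlet propagate along the wall, across the shock, and through every step of the iteration, so that even/odd reflection across $x_1=0$ yields full $C^{2,\alpha}$ regularity; for a generic cone angle this fails and only $C^{\alpha}$ velocity is expected. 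Also note a labeling slip: the subsonic region is $\{r_s<r<r_2\}$ and the exit pressure is prescribed at $r=r_2$, not at $r=r_1$.
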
}
\def\br{\begin{remark}}
\def\er{\end{remark}}
\def\bd{\begin{definition}}
\def\ed{\end{definition}}
\def\bpf{\begin{proof}}
\def\epf{\end{proof}}
\def\bex{\begin{example}}
\def\eex{\end{example}}
\def\bq{\begin{question}}
\def\eq{\end{question}}
\def\bas{\begin{assumption}}
\def\eas{\end{assumption}}
\def\ber{\begin{exercise}}
\def\eer{\end{exercise}}
\def\mb{\mathbb}
\def\mbR{\mb{R}}
\def\mbZ{\mb{Z}}
\def\mc{\mathcal}
\def\mcS{\mc{S}}
\def\ms{\mathscr}
\def\lan{\langle}
\def\ran{\rangle}
\def\lb{\llbracket}
\def\rb{\rrbracket}
\begin{abstract}
  The existence and stability of a spherical transonic shock in a hemispherical shell under the three dimensional perturbations of the incoming flows and the exit pressure is established without any further restrictions on the background transonic shock solutions. The perturbed transonic shock are completely free and its strength and position are uniquely determined by the incoming flows and the exit pressure. A key issue in the analysis is the ``spherical projection coordinates" (i.e. the composition of the spherical coordinates and the stereographic projection), which provides an appropriate setting for the spherical transonic shock problem in the sense that the transformed equations have a similar structure as the steady Euler equations and do not contain any coordinates singularities. Then we decompose the hyperbolic and elliptic modes in the steady Euler equations in terms of the deformation and vorticity. An elaborate reformulation of the Rankine-Hugoniot conditions yields an unusual second order differential boundary condition on the shock front to the first order nonlocal deformation-curl system, from which an oblique boundary condition can be derived after homogenizing the curl system and introducing the potential function. The analysis of the compatibility conditions at the intersection of the shock front and the shell boundary is crucial for the optimal regularity of all physical quantities.
\end{abstract}

\section{Introduction and main results}

In this paper, we concern the existence and stability of the spherical transonic shock in the divergent part of De Laval nozzles whose cross section decreases first and then increases. Courant and Friedrichs \cite{cf48} had described that if the
upcoming flow becomes supersonic after passing through the throat of a De Laval nozzle, to match the prescribed appropriately large exit pressure, a shock front intervenes at some place in the diverging part of the nozzle and the gas is compressed and slowed down to subsonic speed. This transonic shock phenomena can be described by the steady compressible Euler equations
\be\label{com-euler}
\begin{cases}
\text{div }(\rho {\bf u})=0,\\
\text{div }(\rho {\bf u}\otimes {\bf u}+ p I_3) =0,\\
\text{div }(\rho (\f12|{\bf u}|^2 +e) {\bf u}+ P{\bf u}) =0.
\end{cases}
\ee
where ${\bf u}=(u_1, u_2,u_3)$, $\rho$, $P$, and $e$ stand for the velocity, density, pressure, and internal energy, respectively. For polytropic gases, the equation of state and the internal energy take the form
\begin{equation}\no
P=K(S)\rho^{\ga}=A  e^{\frac{S}{c_v}}\rho^{\gamma}\quad \text{and}\quad
e=\f{P}{(\ga-1)\rho},\ \ \ \gamma>1,
\end{equation}
respectively, where $\gamma> 1$, $A$, and $c_v$ are positive constants, and $S$ is called the specific entropy. It is well-known that the system \eqref{com-euler} is  hyperbolic for supersonic flows ($M_a>1$),  hyperbolic-elliptic  coupled   for subsonic flows ($M_a<1$), and degenerate at sonic point (i.e. $M_a=1$), where $M_a=\frac{|{\bf u}|}{c(\rho,K)}$ is the Mach number of the flow with $c(\rho,K)=\sqrt{\partial_\rho P(\rho, K)}$ being the local sound speed. The quantity $B=\frac{|{\bf u}|^2}{2}+e+\frac{P}{\rho}$ is called the Bernoulli's quantity.

To study the spherical transonic shock flow, people usually use the spherical coordinates
\be\no
x_1 =r\cos\theta,\q x_2=r\sin\theta\cos\varphi,\quad x_3=r\sin\theta\sin\varphi.
\ee
and decompose the velocity ${\bf u}=U_r {\bf e}_r+ U_{\theta}{\bf e}_{\th} + U_{\varphi}{\bf e}_{\var}$, where
\be\no
&&{\bf e}_r =(\cos\th,\sin\theta \cos\var,\sin\theta \sin\var)^t,\q {\bf e}_{\th}=(-\sin\th,\cos\theta \cos\var,\cos\th \sin\var)^t,\\\no
&&{\bf e}_{\var}=(0,-\sin\var,\cos\var)^t.
\ee
Then the steady compressible Euler equations can be rewritten as
\be\lab{euler-sph}\begin{cases}
\p_r U_r +\frac{1}{r}\p_{\theta} U_{\theta} + \frac{1}{r\sin\theta}\p_{\varphi} U_{\varphi} +\frac{2}{r} U_r \\
\q\q\q+\frac1{r} U_{\theta}\cot\theta+ \frac 1{\rho}(U_r \p_r +\frac{U_{\theta}}{r} \p_{\theta} +\frac{U_{\varphi}}{r\sin\theta} \p_{\varphi})\rho=0,\\
(U_r \p_r +\frac{U_{\theta}}{r} \p_{\theta} +\frac{U_{\varphi}}{r\sin\theta} \p_{\varphi})U_r + \frac 1{\rho} \p_r P- \frac{(U_{\theta}^2+U_{\varphi}^2)}{r}=0,\\
(U_r \p_r +\frac{U_{\theta}}{r} \p_{\theta} +\frac{U_{\varphi}}{r\sin\theta} \p_{\varphi})U_{\theta}+\frac{1}{\rho}\frac{1}{r}\p_{\theta} P +\frac{U_r U_{\theta}}{r}-\frac{U_{\varphi}^2}{r}\cot\theta=0,\\
(U_r \p_r +\frac{U_{\theta}}{r} \p_{\theta} +\frac{U_{\varphi}}{r\sin\theta} \p_{\varphi})U_{\varphi}+\frac{1}{\rho}\frac{1}{r\sin\theta}\p_{\varphi} P+\frac{U_r U_{\varphi}}{r}+\frac{U_{\theta} U_{\varphi}}{r}\cot\theta=0,\\
(U_r \p_r +\frac{U_{\theta}}{r} \p_{\theta} +\frac{U_{\varphi}}{r\sin\theta} \p_{\varphi}) B=0.
\end{cases}\ee
It was shown that there exists a class of spherical symmetric transonic shock solution in $\Omega$, which can be stated as follows.
\begin{proposition}\label{background}
{\it Suppose that a spherically symmetric supersonic flow ${\bf u}^-(x)=\bar{U}^-(r_1){\bf e}_r, \bar{\rho}^-(x)=\bar{\rho}^-(r_1)>0, \bar{K}^-(x)=\bar{K}^->0$ is prescribed at $r=r_1$, where $(\bar{U}^-(r_1))^2>c^2(\bar{\rho}^-(r_1),\bar{K}^-)$. Then there exists two positive constants $P_1$ and $P_2$ depending only on the incoming supersonic flows and $r_1,r_2$, such that when the exit pressure $P_e\in (P_1, P_2)$, there exists a unique spherical symmetric transonic shock solution
\be\no\begin{cases}
({\bar {\bf u}}^-,\bar{P}^-,\bar{K}^-)(r,\theta,\varphi):=(\bar{U}^-(r){\bf e}_r, \bar{P}^-(r),\bar{K}^-),\ \ &\text{in }\Omega^b_-=(r_1,r_s)\times E_0,\\
({\bar {\bf u}}^+,\bar{P}^+,\bar{K}^+)(r,\theta,\varphi):=(\bar{U}^+(r){\bf e}_r, \bar{P}^+(r),\bar{K}^+),\ \ &\text{in }\Omega^b_+=(r_s,r_2)\times E_0
\end{cases}\ee
to \eqref{euler-sph} satisfying the exit pressure condition
\be\no
\bar{P}^+(r_2)=P_e
\ee
with a shock front located at $r=r_s\in (r_1,r_2)$. Across the shock, the Rankine-Hugoniot conditions and the physical entropy condition are satisfied:
\be\no
[\bar{\rho} \bar{U}]=0,\quad [\bar{\rho} \bar{U}^2+\bar{P}]=[\bar{B}]=0,\quad \bar{K}^+>\bar{K}^-,
\ee
where $[f]|_{r=r_s}:=f(r_s+)-f(r_s-)$ denotes the jump of $f$ at $r=r_s$. }
\end{proposition}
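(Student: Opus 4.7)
The plan is to use the full spherical symmetry of the Ansatz to reduce the steady Euler system \eqref{euler-sph} to a problem in one radial variable on each side of the shock, to couple the two regions through the Rankine--Hugoniot relations to obtain a one-parameter family of candidate solutions indexed by the shock radius $r_s \in (r_1,r_2)$, and then to select the unique admissible $r_s$ from the prescribed exit pressure by an intermediate-value/monotonicity argument.

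First I would substitute $U_\theta = U_\varphi \equiv 0$ and $\partial_\theta = \partial_\varphi \equiv 0$ into \eqref{euler-sph}. The two angular momentum equations are automatically satisfied, the continuity equation integrates to $r^2 \bar\rho(r)\bar U(r) \equiv m_0$ with $m_0 := r_1^2 \bar\rho^-(r_1)\bar U^-(r_1)$, the Bernoulli equation reduces to $B$ being constant on each smooth piece, and the entropy $K$ is piecewise constant. Substituting $\bar\rho = m_0/(r^2\bar U)$ into $\tfrac{1}{2}\bar U^2 + \tfrac{\gamma K}{\gamma-1}\bar\rho^{\gamma-1} = B$ yields an implicit algebraic relation between $\bar U$ and $r$ (for fixed $B,m_0,K$) which, in the divergent shell $(r_1,r_2)$, admits a strictly increasing supersonic branch and a strictly decreasing subsonic branch, separated by the sonic state (classical area--Mach number computation).

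Launching the supersonic branch from the prescribed data at $r_1$ produces $(\bar U^-,\bar\rho^-,\bar P^-)(r)$ globally on $(r_1,r_2)$ together with a strictly increasing upstream Mach number $M^-(r)$. At any candidate $r_s \in (r_1,r_2)$, the stationary normal-shock RH relations $[\bar\rho\bar U]=[\bar\rho\bar U^2+\bar P]=[\bar B]=0$ form a closed algebraic system whose unique subsonic root (Prandtl relation) gives $(\bar U^+,\bar\rho^+,\bar P^+)(r_s+)$, and the entropy jump $\bar K^+ > \bar K^-$ is automatic since $M^-(r_s)>1$. Propagating this post-shock state through $(r_s,r_2)$ along the subsonic branch of the same algebraic relation (with the upstream $B$, $m_0$ and the new constant $\bar K^+$) yields a well-defined map
\begin{equation*}
F : (r_1,r_2) \to \mathbb{R}, \qquad F(r_s) := \bar P^+(r_2;r_s).
\end{equation*}

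The main analytical point, and the step I expect to be the principal obstacle, is showing that $F$ is continuous and strictly monotone. Continuity follows from smooth dependence of the algebraic Bernoulli relation and the RH system on parameters. For strict monotonicity the key observation is that $B$ and $m_0$ are preserved across the shock, so the only datum of the subsonic region that varies with $r_s$ is the downstream entropy $\bar K^+(r_s)$; since $M^-(r_s)$ is strictly increasing, the Prandtl/Hugoniot formulas force $\bar K^+(r_s)$ to be strictly increasing as well. Implicitly differentiating the Bernoulli relation $\tfrac{1}{2}\bar U^2 + \tfrac{\gamma \bar K^+}{\gamma-1}\bar\rho^{\gamma-1}=B$ at $r=r_2$ with respect to $\bar K^+$ at fixed $B, m_0$, and using $c^2(\bar\rho,\bar K^+) = \gamma \bar K^+\bar\rho^{\gamma-1} > \bar U^2$ on the subsonic branch, one gets $\partial_{\bar K^+}\bar P^+(r_2)<0$, so $F$ is strictly decreasing. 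Setting $P_2 := \lim_{r_s \to r_1^+} F(r_s)$ and $P_1 := \lim_{r_s \to r_2^-} F(r_s)$, the intermediate value theorem then produces, for each $P_e \in (P_1,P_2)$, a unique $r_s \in (r_1,r_2)$ with $F(r_s)=P_e$, completing the construction of the background transonic shock solution.
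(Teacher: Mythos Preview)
The paper does not supply its own proof of this proposition; immediately after the statement it simply refers the reader to \cite[Section~147]{cf48} and \cite[Theorem~1.1]{xy08b} for the details. Your outline is exactly the classical one-dimensional analysis carried out in those references: reduce to the radial ODEs via spherical symmetry, use the mass--flux and Bernoulli invariants to parametrize the supersonic and subsonic branches, apply the Rankine--Hugoniot/Prandtl relation at a candidate $r_s$, and then vary $r_s$ to match the exit pressure by monotonicity.

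Your argument for strict monotonicity of $F(r_s)$ is the right one, and your computation that $\partial_{\bar K^+}\bar P^+(r_2)<0$ on the subsonic branch (fixing $B$, $m_0$, $r_2$) is correct. One minor caveat worth making explicit: in defining $F$ you implicitly assume that the subsonic branch launched from the post-shock state exists smoothly all the way out to $r=r_2$ without choking (i.e.\ without hitting sonic). For a divergent shell this is true because along the subsonic branch the Mach number is monotone decreasing in $r$, so the flow moves away from sonic; but this should be stated, since otherwise $F$ is not a priori defined on all of $(r_1,r_2)$. With that remark your proposal matches the standard argument the paper is citing.
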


This special solution, $\overline{\bm{\Psi}}$, will be called the background solution. Clearly, one can extend the supersonic and subsonic parts of $\overline{\bm{\Psi}}$ in a natural way, respectively. With an abuse of notations, we still call the extended subsonic and supersonic solutions ${\overline{\bm{\Psi}}}^+$ and ${\overline{\bm{\Psi}}}^-$, respectively.  For detailed properties of this spherical symmetric transonic shock solution, we refer to \cite[Section 147]{cf48} or \cite[Theorem 1.1]{xy08b}. For the structural stability of the transonic shock problem in flat nozzles, one may refer to \cite{cf03,ccf07,chen05,chen08,fx21,xy05,xy08a}. For the structural stability of the transonic shock problem in divergent sectors and conic nozzles, see the references \cite{bf11,lxy09a,lxy09b,lxy10b,lxy13,lxy16,wxx,wxy21a,wx23a}. The main goal of this paper is to establish the structural stability of this spherical symmetric transonic shock solution under three dimensional perturbations of the incoming supersonic flows and the exit pressure.

To avoid the corner singularity near the intersection of the shock front and the nozzle wall, we assume the flow region to be a hemispherical shell $\Omega=\{(x_1,x_2,x_3)\in \mathbb{R}^3: x_1>0,\  r_1<r=\sqrt{x_1^2+x_2^2+x_3^2}<r_2\}$ so that a symmetric extension technique can be used to improve the regularity of flows. The flow region becomes $\Omega:=\{(r,\theta,\varphi): r_1<r<r_2,(\theta,\varphi)\in E_0\}$ with $E_0=[0,\frac{\pi}{2}]\times \mathbb{T}_{2\pi}$, where $\mathbb{T}_{2\pi}$ is a torus with period $2\pi$. The existence and uniqueness of cylindrical transonic shock flows under three dimensional perturbations has been established recently in \cite{wx23a}. Due to the more complicated structure of the nozzle (comparing with the cylindrical domain in \cite{wx23a}), the structural stability of the spherical transonic shock problem requires some new ideas.

Now we give a detailed description of our stability problem. It turns out that an appropriate choice of the coordinates will play a crucial role in the solvability of the problem. There are several ``singular terms" in \eqref{euler-sph} due to the spherical coordinates, it seems quite difficult to resolve the singularity especially when one solves the transport equations for the Bernoulli's quantity and the entropy. Thus the spherical coordinates do not provide us an appropriate problem setting.

It is well-known that the stereographic projection will map the sphere (excluding the projection point) one by one and onto the complex plane $\mathbb{C}$, and transform the Laplace-Beltrami operator $\Delta_{\mathbb{S}}=\p_{\theta}^2+\frac{\cos\theta}{\sin\theta}\p_{\theta}+\frac{1}{\sin^2\theta}\p_{\varphi}^2$ on the sphere to the standard Laplace operator on $\mathbb{C}$ up to a positive factor, see \cite{gm18} and the references therein. Inspired by these facts, we combine the spherical coordinates and the stereographic projection and introduce the ``spherical projection coordinates"\footnote{It is possible that someone in the history had used these coordinates, but the author can not locate it in the literatures.} as follows: $\mathscr{S}: x\in \Omega\rightarrow z\in \mathcal{N}:=(r_1,r_2)\times \{z'=(z_2,z_3):|z'|<1\}$ by
\be\label{st1}
&&z_1=r=\sqrt{x_1^2+x_2^2+x_3^2},\ \ z_2 = \frac{\sin \theta \cos\varphi}{1+\cos\theta}=\frac{x_2}{x_1+\sqrt{x_1^2+x_2^2+x_3^2}},\\\no
&&z_3= \frac{\sin \theta \sin\varphi}{1+\cos\theta}=\frac{x_3}{x_1+\sqrt{x_1^2+x_2^2+x_3^2}}.
\ee
The transform $\mathscr{S}$ is a one-to-one and onto smooth mapping from $\Omega$ to $\mathcal{N}$ and its inverse mapping $\mathscr{S}^{-1}:z\in \mathcal{N}\rightarrow x\in \Omega$ is
\be\no
x_1=\frac{1-|z'|^2}{1+|z'|^2} z_1,\ \ \ x_2=\frac{2 z_1 z_2}{1+|z'|^2},\ \ \ \ x_3=\frac{2 z_1 z_3}{1+|z'|^2}.
\ee

Define the vector field $(U_1, U_2,U_3)$ as
\be\no
U_1=U_r,\ \ U_{\theta}= U_2\cos \varphi+ U_3 \sin \varphi,\ \ \ U_{\varphi}=-U_2\sin\varphi +U_3 \cos\varphi,
\ee
then
\be\no
{\bf u}(x)= U_r {\bf e}_r + U_{\theta} {\bf e}_{\theta} + U_{\varphi} {\bf e}_{\varphi}= U_1(z) \tilde{{\bf e}}_1(z) +U_2(z) \tilde{{\bf e}}_2(z) +U_3(z) \tilde{{\bf e}}_3(z),
\ee
with
\be\label{basis}\begin{cases}
\tilde{{\bf e}}_1(z)={\bf e}_r=\frac{1}{1+|z'|^2}(1-|z'|^2, 2z_2,2z_3)^t,\ \ \ \ \ z'=(z_2,z_3),\\
\tilde{{\bf e}}_2(z)=\cos\varphi {\bf e}_{\theta}- \sin\varphi {\bf e}_{\varphi}=\frac{1}{1+|z'|^2}(-2z_2,1+z_3^2-z_2^2,-2z_2z_3)^t,\\
\tilde{{\bf e}}_3(z)=\sin\varphi {\bf e}_{\theta}+\cos\varphi {\bf e}_{\varphi}=\frac{1}{1+|z'|^2}(-2z_3,-2z_2z_3,1+z_2^2-z_3^2)^t.
\end{cases}\ee
It is noted that $\tilde{{\bf e}}_i, i=1,2,3$ constitute an orthonormal basis in $\mathbb{R}^3$. We list some basic facts about the divergence, the curl and the Laplace operator on the ``spherical projection coordinates". Define $\phi(\mathscr{S}^{-1}z)=\tilde{\phi}(z)$ and ${\bf u}(\mathscr{S}^{-1} z)=U_1(z) \tilde{{\bf e}}_1(z) +U_2(z) \tilde{{\bf e}}_2(z) +U_3(z) \tilde{{\bf e}}_3(z)$, then
\be\label{z1}
&&\Delta_x \phi(x)= \p_{z_1}^2\tilde{\phi}+\frac{(1+|z'|^2)^2}{4z_1^2}\sum_{j=2}^3\p_{z_j}^2\tilde{\phi}+\frac{2}{z_1}\p_{z_1}\tilde{\phi} ,\\\label{z2}
&&{\bf u}(x)\cdot\nabla_x= U_1\p_{z_1}+\frac{1+|z'|^2}{2z_1}(U_2\p_{z_2}+U_3\p_{z_3}),\\\label{z3}
&&\text{div }{\bf u}(x)= \p_{z_1}U_1(z) + \frac{1+|z'|^2}{2z_1}\sum_{j=2}^3\p_{z_j}U_j+\frac{2U_1}{z_1} -\frac{1}{z_1}\sum_{j=2}^3 z_j U_j,\\\label{z4}
&&\text{curl }{\bf u}(x)=\omega_1(z) \tilde{{\bf e}}_1 +\omega_2(z) \tilde{{\bf e}}_2 +\omega_3(z) \tilde{{\bf e}}_3,
\ee
where
\be\no
&&\omega_1= \frac{1+|z'|^2}{2z_1}(\p_{z_2} U_3- \p_{z_3} U_2) + \frac{1}{z_1}(z_3 U_2-z_2 U_3),\\\no
&&\omega_2=\frac{1+|z'|^2}{2z_1}\p_{z_3} U_1- \p_{z_1} U_3- \frac{U_3}{z_1},\\\no
&&\omega_3= \p_{z_1} U_2+\frac{U_2}{z_1}- \frac{1+|z'|^2}{2z_1}\p_{z_2} U_1.
\ee

Under the ``spherical projection coordinates", the steady Euler equations are transformed as
\be\label{euler-s}\begin{cases}
\p_{z_1} (\rho U_1) + \frac{1+|z'|^2}{2z_1}(\p_{z_2} (\rho U_2)+\p_{z_3} (\rho U_3))+\frac{2\rho U_1}{z_1} -\frac{\rho}{z_1}(z_2 U_2+z_3 U_3)=0,\\
(U_1\p_{z_1}+\frac{1+|z'|^2}{2z_1}(U_2\p_{z_2}+U_3\p_{z_3}))U_1+\frac{1}{\rho}\p_{z_1} P-\frac{U_2^2+U_3^2}{z_1}=0,\\
(U_1\p_{z_1}+\frac{1+|z'|^2}{2z_1}(U_2\p_{z_2}+U_3\p_{z_3}))U_2+\frac{1+|z'|^2}{2z_1}\frac{1}{\rho}\p_{z_2} P\\
\q\q\q \q+\frac{1}{z_1} U_1 U_2-\frac{1}{z_1} U_3(z_3 U_2-z_2 U_3)=0,\\
(U_1\p_{z_1}+\frac{1+|z'|^2}{2z_1}(U_2\p_{z_2}+U_3\p_{z_3}))U_3+\frac{1+|z'|^2}{2z_1}\frac{1}{\rho}\p_{z_3} P\\
\q\q\q\q+\frac{1}{z_1} U_1 U_3 +\frac{1}{z_1} U_2(z_3 U_2-z_2 U_3)=0,\\
(U_1\p_{z_1}+\frac{1+|z'|^2}{2z_1}(U_2\p_{z_2}+U_3\p_{z_3}))B=0.
\end{cases}\ee
There are no singular terms in \eqref{euler-s} which provides an appropriate setting for the stability analysis of the spherical transonic shock problem. The domain $\Omega$ is changed to be
\be\no
\mathcal{N}=\{(z_1,z'): r_1<z_1<r_2,\ z'\in E\}\ \ \  \text{with \ \ } \ E=\{z'=(z_2,z_3): |z'|<1\}.
\ee

In the coordinates $(z_1,z')$, the shock surface can be represented as  $z_1=\xi(z')$ and the corresponding Rankine-Hugoniot conditions become
\be\lab{rh}\begin{cases}
[\rho U_1] -\frac{1+|z'|^2}{2\xi(z')}(\p_{z_2}\xi [\rho U_2] + \p_{z_3}\xi [\rho U_3])=0,\\
[\rho U_1^2+ P]-\frac{1+|z'|^2}{2\xi(z')}(\p_{z_2}\xi [\rho U_1 U_2] +\p_{z_3}\xi [\rho U_1 U_3])=0,\\
[\rho U_1 U_2] -\frac{1+|z'|^2}{2\xi(z')}(\p_{z_2}\xi [\rho U_2^2+P] +\p_{z_3}\xi [\rho U_2 U_3])=0,\\
[\rho U_1 U_3]-\frac{1+|z'|^2}{2\xi(z')}(\p_{z_2}\xi [\rho U_2 U_3] +\p_{z_3}\xi [\rho U_3^2+P])=0,\\
[\frac{1}{2}\sum_{j=1}^3U_j^2+\frac{\gamma P}{(\gamma-1)\rho}]=0.
\end{cases}\ee

Moreover, the physical entropy condition is also satisfied
\be\lab{entropy}
K^+(\xi(z'),z')>K^-(\xi(z'),z'),\ \ \text{on}\ \ z_1=\xi(z').
\ee

To describe the compatibility conditions on the shell wall in a simple form, one needs to introduce the polar coordinates
\be\label{polar1}
z_2=a\cos\tau,\ \ \ z_3 =a\sin\tau,
\ee
and define
\be\label{polar2}\begin{cases}
U_a(z_1,a,\tau):= U_2(z_1,a\cos\tau,a\sin\tau) \cos\tau + U_3(z_1,a\cos\tau,a\sin\tau) \sin\tau,\\
U_{\tau}(z_1,a,\tau):= -U_2(z_1,a\cos\tau,a\sin\tau) \sin\tau + U_3(z_1,a\cos\tau,a\sin\tau) \cos\tau,\\
(U_1,K,P, B,\rho)(z_1,a,\tau):= (U_1,K, P,B,\rho)(z_1,a\cos\tau,a\sin\tau).
\end{cases}\ee

Under the coordinates $(z_1,a,\tau)$ introduced in \eqref{polar1}-\eqref{polar2}, the system \eqref{euler-s} is transformed to be
\be\label{euler-p}\begin{cases}
\p_{z_1} (\rho U_1) + \frac{1+a^2}{2z_1}(\p_{a} (\rho U_a)+\frac{1}{a}\rho U_a+\frac{1}{a}\p_{\tau} (\rho U_{\tau}))+\frac{2\rho U_1}{z_1} -\frac{a \rho U_a}{z_1}=0,\\
(U_1\p_{z_1}+\frac{1+a^2}{2z_1}(U_a\p_{a}+\frac{1}{a}U_{\tau}\p_{\tau}))U_1+\frac{1}{\rho}\p_{z_1} P-\frac{U_a^2+U_{\tau}^2}{z_1}=0,\\
(U_1\p_{z_1}+\frac{1+a^2}{2z_1}(U_a\p_{a}+\frac{1}{a}U_{\tau}\p_{\tau}))U_a+\frac{1+a^2}{2z_1}\frac{1}{\rho}\p_{a} P+\frac{1}{z_1} U_1 U_a-\frac{1-a^2}{2z_1a} U_{\tau}^2=0,\\
(U_1\p_{z_1}+\frac{1+a^2}{2z_1}(U_a\p_{a}+\frac{1}{a}U_{\tau}\p_{\tau}))U_{\tau}+\frac{1+a^2}{2z_1}\frac{1}{\rho}\frac{1}{a}\p_{\tau} P+\frac{1}{z_1} U_1 U_{\tau} +\frac{ 1-a^2}{2z_1a} U_a U_{\tau}=0,\\
(U_1\p_{z_1}+\frac{1+a^2}{2z_1}(U_a\p_{a}+\frac{1}{a}U_{\tau}\p_{\tau}))K=0.
\end{cases}\ee

Let the incoming supersonic flow at the inlet $r=r_1$ be prescribed as
\begin{equation}\lab{super1}
{\bf \Psi}^-(r_1, z')=\overline{\Psi}^-(r_1)+ \epsilon (U_{1,0}^-,U_{2,0}^-,U_{3,0}^-, P_0^-,K_0^-)(z'),
\end{equation}
where
$(U_{1,0}^-,U_{2,0}^-,U_{3,0}^-, P_0^-,K_0^-)\in (C^{2,\alpha}(\overline{E}))^5$ satisfy the compatibility conditions
\be\label{comp1}\begin{cases}
\p_a U_{1,0}^-(1,\tau)=\p_a U_{\tau,0}^-(1,\tau)=\p_a P_0^-(1,\theta)=\p_a K_0^-(1,\tau)=0,\ \ \forall\tau\in \mathbb{T}_{2\pi},\\
U_{a,0}^-(1,\tau)=\p_a^2 U_{a,0}^-(1,\tau)+\p_a U_{a,0}^-(1,\tau)=0,\forall\tau\in \mathbb{T}_{2\pi}.
\end{cases}\ee
with
\be\no\begin{cases}
U_{a,0}^-(a,\tau):=U_{2,0}^-(a\cos\tau,a\sin\tau)\cos\tau + U_{3,0}^-(a\cos\tau,a\sin\tau)\sin\tau,\\
U_{\tau,0}^-(a,\tau):=-U_{2,0}^-(a\cos\tau,a\sin\tau)\sin\tau + U_{3,0}^-(a\cos\tau,a\sin\tau)\cos\tau,\\
(U_{1,0}^-,P_0^-, K_0^-)(a,\tau):=(U_{1,0}^-,P_0^-, K_0^-)(a\cos\tau,a\sin\tau).
\end{cases}\ee

On the wall $\{(z_1,z'):r_1\leq z_1\leq r_2, |z'|=1\}$, the slip boundary condition yields that
\be\label{slip}
z_2 U_2(z_1,z')+ z_3 U_3(z_1,z')=0.
\ee

On the exit $z_1=r_2$, the exit pressure is prescribed by
\be\label{pres}
P^+(r_2,z')= P_e + \epsilon P_{ex}(z')\ \text{on}\ \ z'\in E,
\ee
here $\epsilon>0$ is sufficiently small,  $P_{ex}\in C^{2,\alpha}(\overline{E})$ satisfies the compatibility condition
\be\label{pressure-cp}
\p_a P_{ex}(1,\tau)=0,\ \ \ \forall \tau\in \mathbb{T}_{2\pi}.
\ee

The existence and uniqueness of supersonic flow to \eqref{euler-s} follows from the theory of classical solution to the boundary value problem for quasi-linear symmetric hyperbolic equations (See \cite{bs07}).
\begin{lemma}\label{supersonic}
{\it For the supersonic incoming flow given in \eqref{super1} satisfying \eqref{comp1}, there exists $\epsilon_0>0$ depending only on the background solution and the boundary data, such that for any $0<\epsilon<\epsilon_0$, there exists a unique $C^{2,\alpha}(\overline{\mathcal{N}})$ solution $\bm\Psi^-=(U_1^-,U_2^-,U_3^-,P^-,K^-)(z_1,z')$ to \eqref{euler-s} with \eqref{super1}, which satisfies
\be\no
\|(U_1^-,U_2^-,U_3^-,P^-,K^-)-(\bar{U}^-,0,0,\bar{P}^-, \bar{K}^-)\|_{C^{2,\alpha}(\overline{\mathcal{N}})}\leq C_*\epsilon
\ee
and the compatibility conditions
\be\label{comp2}\begin{cases}
(\p_a U_{1}^-,\p_a U_{\tau}^-,\p_a P^-,\p_a K^-)(z_1, 1,\tau)=0,\ \ \forall(z_1,\tau)\in [r_1,r_2]\times\mathbb{T}_{2\pi},\\
U_{a}^-(z_1, 1,\tau)=(\p_a^2 U_{a}^-+\p_a U_{a}^-)(z_1,1,\tau)=0,\forall(z_1,\tau)\in [r_1,r_2]\times \mathbb{T}_{2\pi}.
\end{cases}\ee
}\end{lemma}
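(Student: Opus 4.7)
The plan is to exploit the fact that, for flows close to the supersonic background $\overline{\bm\Psi}^-$, the system \eqref{euler-s} is quasi-linear symmetric hyperbolic with $z_1$ as the evolution variable. After eliminating $\rho=(P/K)^{1/\gamma}$, the system reads schematically
\begin{equation*}
A_1(\bm\Psi^-)\,\partial_{z_1}\bm\Psi^- + \sum_{j=2}^{3} A_j(\bm\Psi^-,z)\,\partial_{z_j}\bm\Psi^- = F(\bm\Psi^-,z),
\end{equation*}
and the strict supersonicity $(\bar U^-)^2>c^2(\bar\rho^-,\bar K^-)$ on $[r_1,r_2]$ furnished by Proposition \ref{background} yields a symmetrizer that makes $A_1$ positive definite in a $C^{2,\alpha}$ neighborhood of $\overline{\bm\Psi}^-$.

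The first step is to remove the lateral wall $\{|z'|=1\}$, which corresponds to the equator $\{x_1=0\}$ in the original Cartesian frame, by a symmetric extension. The Euler system \eqref{com-euler} is invariant under the reflection $(x_1,x_2,x_3,u_1,u_2,u_3)\mapsto(-x_1,x_2,x_3,-u_1,u_2,u_3)$. Tracing this reflection through the spherical coordinates and then the stereographic projection shows that $\theta\mapsto\pi-\theta$ becomes the inversion $a\mapsto 1/a$, under which $U_1,U_\tau,P,K$ must be even about $a=1$ while $U_a$ must be odd. Checking the resulting parity at orders zero, one, and two at $a=1$ produces exactly the identities in \eqref{comp1}; in particular, the inversion (as opposed to a plain reflection $a\mapsto 2-a$) is responsible for the unusual coupling $\partial_a^2 U_a + \partial_a U_a=0$, which is the correct condition for the odd extension of $U_a$ to be $C^{2,\alpha}$ across the equator.

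After extension, the problem becomes a Cauchy problem for a quasi-linear symmetric hyperbolic system on the full spherical shell $\{r_1<r<r_2\}$, with $C^{2,\alpha}$ initial data at $r=r_1$ that is $O(\epsilon)$ close to $\overline{\bm\Psi}^-$. The standard local well-posedness theory from \cite{bs07} yields a unique local-in-$z_1$ $C^{2,\alpha}$ solution. Closing $C^{2,\alpha}$ a priori estimates for the perturbation $\bm\Psi^--\overline{\bm\Psi}^-$, with constants depending only on the background and $r_2-r_1$, together with a standard continuation argument, extends the solution to the full interval $[r_1,r_2]$ for $\epsilon$ small enough and delivers the bound $\|\bm\Psi^--\overline{\bm\Psi}^-\|_{C^{2,\alpha}(\overline{\mathcal N})}\le C_*\epsilon$. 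Restricting the extended solution to the hemisphere and invoking uniqueness together with the reflection invariance of \eqref{euler-s} then propagates the compatibility conditions \eqref{comp1} to \eqref{comp2} for all $z_1\in[r_1,r_2]$.

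The main obstacle is the algebraic verification that \eqref{comp1} is exactly the set of second-order compatibility conditions induced by the inversion $a\mapsto 1/a$, especially the coupled identity $\partial_a^2 U_a+\partial_a U_a=0$ in place of the naive $\partial_a^2 U_a=0$. Once this geometric bookkeeping is in place, the remainder of the argument is a fairly routine application of the quasi-linear symmetric hyperbolic theory and no further technical surprises are expected.
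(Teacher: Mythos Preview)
Your proposal is correct and takes a genuinely different route from the paper's own argument. The paper does not extend to the full shell; instead it works directly on the hemisphere, takes the existence and the $C^{2,\alpha}$ bound as an immediate consequence of quasi-linear symmetric hyperbolic theory (citing \cite{bs07}), and then verifies \eqref{comp2} by differentiating the system \eqref{euler-p} in~$a$ and restricting to $a=1$. Concretely: from the slip condition $U_a^-(z_1,1,\tau)=0$ and the $U_a$–momentum equation one reads off $\partial_a P^-(z_1,1,\tau)=0$; differentiating the $U_1$–, $U_\tau$– and $K$–equations in $a$ and evaluating at $a=1$ yields a \emph{homogeneous} linear transport system for $(\partial_a U_1^-,\partial_a U_\tau^-,\partial_a K^-)$ along the wall, with zero data at $z_1=r_1$ by \eqref{comp1}, hence identically zero; finally, differentiating the continuity equation in~$a$ at $a=1$ forces $\partial_a^2 U_a^-+\partial_a U_a^-=0$. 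The special coefficient structure $\tfrac{1+a^2}{2a}=\tfrac12(a+\tfrac1a)$, whose $a$–derivative vanishes at $a=1$, is what makes these restricted equations close up---this is exactly the infinitesimal shadow of the inversion $a\mapsto 1/a$ that you exploit globally.

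Your extension argument is cleaner conceptually: it explains \emph{why} the coupled condition $\partial_a^2 U_a+\partial_a U_a=0$ (rather than the naive $\partial_a^2 U_a=0$) is the right one, it dispenses with the slip boundary by symmetry, and it reduces the whole lemma to a Cauchy problem on a boundaryless spatial slice. The paper's approach, on the other hand, is more self-contained (no need to pass back to Cartesian coordinates or argue about $C^{2,\alpha}$ regularity of the inverted extension) and makes the role of the specific opening angle $\theta_0=\pi/2$ transparent through the vanishing of $\frac{d}{da}(a+\tfrac1a)\big|_{a=1}$, as flagged in Remark~\ref{r-super}. Both arguments establish the same result; yours gives more geometric insight, the paper's is more mechanical but localized.
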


\begin{remark}\label{r-super}
{\it A detailed proof of \eqref{comp2} in Lemma \ref{supersonic} will be given in \S\ref{appendix} by using the equations \eqref{euler-p}. If one considers the conic nozzle with the opening angle $\theta_0\in (0,\frac{\pi}{2})$, the transformed domain becomes $\mathcal{N}_{\theta_0}=\{(y_1,y'):r_1<y_1<r_2, |y'|<\frac{\sin\theta_0}{1+\cos\theta_0}<1\}$. Then the compatibility condition \eqref{comp1} at the entrance can not be preserved on the whole nozzle wall in general. This can be seen from the proof of Lemma \ref{supersonic} in \S\ref{appendix} where the derivative $\frac{d}{da}(a+\frac1a)|_{a=1}=0$ is used crucially.
}\end{remark}

Therefore, our problem is reduced to solve a free boundary value problem for the steady Euler equations in which the shock front and the downstream subsonic flows are unknown. Then the main result in this paper is stated as follows.
\begin{theorem}\label{existence}
{\it There exists $\epsilon_0>0$ depending only on the background solution $\overline{\bm{\Psi}}$ and the boundary data $(U_{1,0}^-,U_{2,0}^-,U_{3,0}^-, P_0^-,K_0^-)\in (C^{2,\alpha}(\overline{E}))^5$, $P_{ex}\in C^{2,\alpha}(\overline{E})$ satisfying the compatibility conditions \eqref{comp1} and \eqref{pressure-cp}, such that if $0\leq \epsilon<\epsilon_0$, the problem \eqref{euler-s} with \eqref{super1}, \eqref{slip}, \eqref{pres}, and \eqref{rh} has a unique solution $\bm{\Psi}^+=(U_1^+,U_2^+,U_3^+,P^+,K^+)(z_1,z')$ with the shock front $\mathcal{S}: z_1=\xi(z')$ satisfying
\begin{enumerate}[(i)]
  \item The function $\xi(z')\in C^{3,\alpha}(\overline{E})$ satisfies
  \be\no
  \|\xi(z')-r_s\|_{C^{3,\alpha}(\overline{E})}\leq C_*\epsilon,
  \ee
  where $C_*$ depends only on the background solution and the incoming flow and the exit pressure.
  \item The solution $\bm{\Psi}^+=(U_1^+,U_2^+,U_3^+,P^+,K^+)(z_1,z')\in C^{2,\alpha}(\overline{\mathcal{N}_+})$ with $\mathcal{N}_+=\{(z_1,z'): \xi(z')<z_1<r_2,z'\in E\}$ satisfies the entropy condition
  \begin{equation}\no
  K^+(\xi(z')+,z') >K^-(\xi(z')-, z')\quad \text{for}\,\, z'\in E
  \end{equation}
   and
  \be\no
  \|\bm{\Psi}^+ -\overline{{\bm{\Psi}}}\|_{C^{2,\alpha}(\overline{\mathcal{N}_+})}\leq C_*\epsilon.
  \ee
\end{enumerate}
}\end{theorem}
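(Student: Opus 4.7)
The plan is to convert this free boundary problem into a fixed point problem for the shock function $\xi$: fix a candidate shock surface $z_1=\xi(z')$ in a small $C^{3,\alpha}$-neighborhood of $z_1=r_s$, solve the downstream Euler system \eqref{euler-s} in the fixed region $\mathcal{N}_+(\xi)=\{\xi(z')<z_1<r_2,\,z'\in E\}$ using the Rankine-Hugoniot conditions \eqref{rh} as boundary data, and use the leftover Rankine-Hugoniot relation to produce an updated shock position $\xi^\sharp$. The key structural input is the deformation-curl decomposition indicated in the abstract, which separates the elliptic mode (a potential for the mass flux) from the hyperbolic modes (Bernoulli, entropy, and vorticity).

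For fixed $\xi$, the downstream problem is reorganized as follows. The Bernoulli quantity $B$ and the entropy function $K$ satisfy transport equations along the streamlines of the subsonic flow, whose traces on the shock are determined by \eqref{rh} and the upstream data $\bm\Psi^-$ (note in particular $[B]=0$). Taking the curl of the momentum equations and using \eqref{z4} produces a first order transport-type system for the vorticity components $\omega_j$, again with data prescribed from the tangential jumps of the momentum on the shock. These hyperbolic pieces are solved by integration along streamlines close to the background radial field. Introducing a potential $\Phi$ via a decomposition $\rho U_j = (\rho U)_{\mathrm{back}}\tilde{\mathbf{e}}_{1,j} + \partial_{z_j}\Phi + W_j$, where $W$ absorbs the known hyperbolic pieces, the mass conservation equation becomes a second order uniformly elliptic equation for $\Phi$ with small nonlocal lower order terms.

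The Rankine-Hugoniot conditions \eqref{rh} are then repackaged into boundary data for this coupled system. The normal jump of the mass flux supplies an oblique-derivative condition for $\Phi$ on $z_1=\xi(z')$; \eqref{pres} gives a condition at $z_1=r_2$; \eqref{slip} supplies a homogeneous condition on $|z'|=1$. The remaining tangential Rankine-Hugoniot equations, after differentiation along the shock and substitution of the downstream momentum equations, deliver a genuinely \emph{second order} oblique boundary condition for $\xi$ itself in terms of the downstream state; this is precisely the unusual structure mentioned in the abstract and is the mechanism used to update $\xi\mapsto\xi^\sharp$. Schauder estimates for oblique derivative problems, applied on the evenly extended domain across $|z'|=1$, together with the smallness $O(\epsilon)$ of all perturbations, make this iteration a contraction and yield the stated regularity and estimates.

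Two points are delicate. First, strict obliqueness of the boundary condition for $\Phi$ at the background shock must be verified; because $[B]=0$, the $\xi$-equation emerges only at second order after tangential differentiation, and one must check that the coefficient of the principal derivative of $\xi$ is non-vanishing on $\overline{\bm\Psi}$. Second, and more technically demanding, is the propagation of the corner compatibility conditions \eqref{comp2} and \eqref{pressure-cp} to the shock front corner $\{z_1=\xi(z'),|z'|=1\}$ at every step of the iteration, as flagged in Remark \ref{r-super}. The algebraic identity $\tfrac{d}{da}(a+a^{-1})|_{a=1}=0$, which is special to the hemispherical angle $\theta_0=\pi/2$, is what makes the symmetric even-in-$a$ reflection available, and the analogous cancellations must be traced through the deformation-curl system, the second order shock relation, and the transport equations to secure the optimal $C^{2,\alpha}$/$C^{3,\alpha}$ regularity asserted in Theorem \ref{existence}. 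Uniqueness and the entropy inequality $K^+>K^-$ then follow from continuity in $\epsilon$ together with the contraction property.
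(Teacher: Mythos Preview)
Your proposal captures the main ingredients---deformation-curl decomposition, transport of $B$, $K$, and vorticity, even reflection across $|z'|=1$, and the role of the corner compatibility---but the iteration architecture and the attribution of the second-order boundary condition differ from the paper in ways that matter.

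First, the paper does \emph{not} iterate on the shock function $\xi$ alone. It fixes the domain once by the shear map \eqref{coor} and runs a contraction on the full tuple $(W_1,\dots,W_5,W_6)$. The shock position is recovered \emph{algebraically} at each step from the first relation in \eqref{shock13}, namely $W_6=\tfrac{1}{b_1}W_1(r_s,\cdot)+\text{h.o.t.}$; it is never updated via a second-order equation for $\xi$. Your scheme ``fix $\xi$, solve downstream, update $\xi$ by the leftover R--H relation'' would require solving the full nonlinear subsonic problem at each step and does not directly expose the cancellation that makes the contraction work.

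Second, and more substantively, the ``unusual second order differential boundary condition'' is for $W_1$ on the shock (equivalently, for the potential $\phi$), not for $\xi$. The mechanism is Lemma~\ref{equi0}: the two tangential R--H relations \eqref{shock12} are rewritten as $\p_{y_2}F_3-\p_{y_3}F_2=0$, $\p_{y_2}F_2+\p_{y_3}F_3=0$, $y_2F_2+y_3F_3=0$. The curl part supplies the boundary datum for $\tilde\omega_1$; the divergence part, combined with the algebraic shock relation, yields \eqref{shock19}, a second-order condition on $W_1(r_s,\cdot)$. Only after homogenizing the curl system (Lemma~\ref{dc-1}) does one introduce a potential $\phi$ for the \emph{residual curl-free velocity} $M_j$, not for $\rho U_j$ as you wrote; then \eqref{shock19} becomes, via Lemma~\ref{oblique}, the oblique condition $\p_{y_1}\phi(r_s,\cdot)-b_4\phi(r_s,\cdot)=m_s$ with $b_4>0$. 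The sign $b_4>0$ and the positivity of the nonlocal coefficient $d_4$ in \eqref{den41} are precisely what guarantee unique solvability without any structural hypothesis on the background (Remark~\ref{r3}); your description of ``strict obliqueness to be verified'' misses that this is where the verification actually lives.

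In short, your outline would need substantial reorganization: replace the $\xi$-iteration by a full-state iteration with the shock determined algebraically, attribute the second-order condition to $W_1$ (hence $\phi$) via the div--curl reformulation of the tangential jumps, and introduce the potential only after splitting off the vortical part of the velocity.
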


\begin{remark}\label{r3}
{\it Different from \cite{chen08,cy08} where they assumed that the shock front passed through a fixed point, the shock front in Theorem \ref{existence} is completely free and uniquely determined by the incoming flow and the exit pressure. The authors in \cite{lxy16} proved the stability of the spherical transonic shock by requiring that the background transonic shock satisfies the ``Structure Condition" which seems difficult to verify. Such a condition is also removed in Theorem \ref{existence}. This is essentially due to the use of the deformation-curl decomposition for the elliptic-hyperbolic coupled structure in the steady Euler equations developed by the authors in \cite{wx19} and the fact that the first order deformation-curl system for the velocity can be reduced to a second order nonlocal elliptic equation for a potential function with oblique boundary conditions on the shock front and the exit whose coefficients have correct signs (see the system \eqref{den41}) so that its unique solvability can be derived directly without any restrictions.
}\end{remark}

\begin{remark}\label{r4}
{\it For the transonic shock flows constructed in Theorem \ref{existence}, the velocity, the entropy and the pressure in the subsonic region have the same $C^{2,\alpha}(\overline{\Omega^+})$ regularity. This is an important difference between our works and the previous existence results \cite{chen08,cy08,lxy16}, where they require that the pressure has one order higher regularity than the velocity, thus the regularity requirement of the boundary datum prescribed in \cite{cy08,lxy16} are $C^{3,\alpha}$, stronger than ours. This regularity improvement is a crucial advantage of the deformation-curl decomposition and should play more important role when we consider the stability under generic perturbations of the shape of the conic nozzle.
}\end{remark}
\begin{remark}\label{r5}
{\it If one considers the spherical transonic shock flows in a conic nozzle with the opening angle $\theta_0\in (0,\frac{\pi}{2})$, then the compatibility condition \eqref{comp1} can not be preserved in general. In this case, the velocity field would develop singularity and only $C^{\alpha}(\overline{\mathcal{N}_+})$ regularity for the velocity field is expected in subsonic region (See also \cite[Remark 3.2 and Lemma 3.3]{xyy09}), which induces a loss of uniqueness for the streamlines. However, we emphasize that the ``spherical projection coordinates", the deformation-curl decomposition and the decomposition of the Rankine-Hugoniot conditions developed here are universal even for generic perturbations of supersonic incoming, the exit pressure and the nozzle geometry and may shed light on other related problems.
}\end{remark}

Mathematically, the steady Euler equations in the spherical coordinates contain several artificial ``singular terms" which may cause several trouble including a loss of derivatives when solving the transport equations. Therefore, one needs to find an appropriate coordinates and an associated basis of $\mathbb{R}^3$ to decompose the velocity field so that the transformed equations do not contain singular terms and share a similar structure with the original steady Euler equations. The introduction of  the ``spherical projection coordinates" \eqref{st1} and the basis \eqref{basis} is one of key observations and provide an appropriate setting for the study of the spherical transonic shock problem in a conic nozzle. Under the ``spherical projection coordinates", the divergent conic nozzle is transformed to a cylinder with a circular section. In general, singularities will form near the intersection of the shock front and the nozzle wall and propagate along the trajectories, thus the flow can not have good regularity up to the nozzle wall. In a very special case where the opening angle of the conic nozzle is $\pi/2$, we find that the compatibility conditions at the entrance will be preserved along the wall even crossing the shock front, and thus the regularity in subsonic region can be improved to be $C^{2,\alpha}$ up to the nozzle wall.

Note that the steady Euler equations \eqref{com-euler} are hyperbolic-elliptic mixed in subsonic region and the shock front is a free boundary on which only the Rankine-Hugoniot conditions and the entropy condition hold. we decompose the hyperbolic and elliptic modes in the steady Euler equations \eqref{euler-s} in terms of the deformation and the vorticity. An elaborate decomposition of the Rankine-Hugoniot jump conditions shows that the uniquely determination of the shock front by an algebraic equation and also suitable boundary conditions for the hyperbolic and elliptic modes, respectively. Especially, it yields an unusual second order differential boundary condition on the shock front to the first order nonlocal deformation-curl system, from which an oblique boundary condition can be derived after homogenizing the curl system and introducing the potential function. The compatibility conditions \eqref{comp1} and \eqref{pressure-cp} are prescribed to improve the regularity near the intersection of the entrance and the half spherical shell which is quite nontrivial since \eqref{comp1} is only prescribed at the entrance. We must show that they are preserved not only by the transportation along the streamlines but also when the fluid moves across the shock front.

This paper will be organized as follows. Section \S\ref{reformulation} will devote to the deformation-curl decomposition of the steady Euler equations \eqref{euler-s} in ``spherical projection coordinates", and the reformulation of the Rankine-Hugoniot jump conditions. We also introduce a coordinate transformation to fix the free boundary. In Section \S\ref{proof}, we design an iteration scheme and solve the deformation-curl system with nonlocal terms and the unusual second order differential boundary condition on the shock front. In the Appendix \S\ref{appendix}, we verify some compatibility conditions that are used in Section \S\ref{proof}.

\section{The reformulation of the spherical transonic shock problem}\label{reformulation}

\subsection{The deformation-curl decomposition of the steady Euler equations \eqref{euler-s}}
Let us give the details of the deformation-curl decomposition to the steady Euler system in the ``spherical projection coordinates". First, we describe the hyperbolic modes in \eqref{euler-s}. The Bernoulli's quantity and the entropy satisfy the transport equations
\be\lab{ent10}
&&\b(\p_{z_1}+\frac{1+|z'|^2}{2z_1 U_1}(U_2\p_{z_2}+U_3\p_{z_3})\b) K=0,\\\label{ber10}
&&\b(\p_{z_1}+\frac{1+|z'|^2}{2z_1 U_1}(U_2\p_{z_2}+U_3\p_{z_3})\b) B=0.
\ee

Recall the definition of the vorticity $\bm{\omega}=\text{curl }{\bf u}(x)= \omega_1 \tilde{e}_1+ \omega_2 \tilde{e}_2+ \omega_3 \tilde{e}_3$ in \eqref{z4}, we may rewrite the third and fourth equations in \eqref{euler-s} as
\be\no\begin{cases}
\displaystyle U_1 \om_3 - U_3 \om_1 + \frac{1+|z'|^2}{2z_1}\b(\p_{z_2} B-\frac{B-\frac{1}{2}|{\bf U}|^2} {\gamma K}\p_{z_2} K\b)=0,\\
\displaystyle-U_1 \om_2 + U_2 \om_1 + \frac{1+|z'|^2}{2z_1}\b(\p_{z_3} B-\frac{B-\frac{1}{2}|{\bf U}|^2} {\gamma K}\p_{z_3} K\b)=0,
\end{cases}\ee
from which one can derive that
\be\lab{vor13}\begin{cases}
\displaystyle\omega_2 =\frac{U_2\omega_1}{U_1} +\frac{1+|z'|^2}{2z_1 U_1} \b(\p_{z_3} B-\frac{B-\frac{1}{2}|{\bf U}|^2} {\gamma K}\p_{z_3} K\b),\\
\displaystyle\omega_3 =\frac{U_3\omega_1}{U_1}- \frac{1+|z'|^2}{2z_1 U_1} \b(\p_{z_2} B-\frac{B-\frac{1}{2}|{\bf U}|^2} {\gamma K}\p_{z_2} K\b).
\end{cases}\ee

Since $\text{div }\text{curl }{\bf u}=0$, then
\be\no
\p_{z_1} \omega_1 + \frac{1+|z'|^2}{2z_1}(\p_{z_2}\omega_2 + \p_{z_3}\omega_3) +\frac{2}{z_1}\omega_1-\frac{1}{z_1}(z_2\omega_2+z_3\omega_3)=0.
\ee
Substituting \eqref{vor13} into the above equation, one obtains
\be\no
&&\p_{z_1} \omega_1 +\frac{1+|z'|^2}{2z_1 U_1}\sum\limits_{i=2}^3U_i\p_{z_i}\omega_1+\frac{2}{z_1} \omega_1 +\frac{1+|z'|^2}{2z_1}\sum\limits_{i=2}^3\p_{z_i}\b(\frac{U_i}{U_1}\b)\omega_1\\\label{vor14}
&&\quad-\frac{z_2 U_2 +z_3 U_3}{z_1 U_1}\omega_1+\frac{(1+|z'|^2)^2}{4z_1^2}\bigg\{\p_{z_2}\b(\frac{1}{U_1}\b)\p_{z_3} B-\p_{z_3}\b(\frac{1}{U_1}\b)\p_{z_2} B\\\no
&&\quad-\p_{z_2}\b(\frac{B-\frac{1}{2}|{\bf U}|^2} {\gamma K U_1}\b)\p_{z_3}K+\p_{z_3}\b(\frac{B-\frac{1}{2}|{\bf U}|^2} {\gamma K U_1}\b)\p_{z_2}K\bigg\}=0.
\ee

Second, we study the elliptic modes in the steady Euler system \eqref{euler-s}. Using the Bernoulli's quantity $B=\frac12 |{\bf U}|^2 + \frac{\gamma K}{\gamma-1}\rho^{\gamma-1}$, one can represent the density as a function of $B, |{\bf U}|^2$ and $K$.
\be\label{den}
\rho= \rho(B,K, |{\bf U}|^2)=\b(\f{\ga-1}{\ga K}\b)^{\f{1}{\ga-1}}(B-\frac12 |{\bf U}|^2)^{\f{1}{\ga-1}}.
\ee
Substituting \eqref{den} into the continuity equation, we derive that
\be\label{den11}
&&\q (c^2(B,|{\bf U}|^2)-U_1^2)\p_{z_1} U_1 +\frac{1+|z'|^2}{2z_1} c^2(B,|{\bf U}|^2)\sum_{j=2}^3\p_{z_j} U_j\\\no
&&+ \frac{c^2(B,|{\bf U}|^2)}{z_1}(2 U_1-\sum_{j=2}^3z_j U_j)=U_1\sum_{j=2}^3 U_j\p_{z_1} U_j+ \frac{1+|z'|^2}{2z_1} \sum_{j=2}^3\sum_{i=1}^3U_j U_i\p_{z_j} U_i.
\ee

The equation \eqref{den11}, together with the vorticity equations, constitutes a deformation-curl system for the velocity field:
\be\label{dc}\begin{cases}
(c^2(B,|{\bf U}|^2)-U_1^2)\p_{z_1} U_1 +\frac{1+|z'|^2}{2z_1} c^2(B,|{\bf U}|^2)\sum_{j=2}^3\p_{z_j} U_j\\
\q\q+ \frac{c^2(B,|{\bf U}|^2)}{z_1}(2 U_1-\sum_{j=2}^3z_j U_j)
\\=U_1\sum_{j=2}^3 U_j\p_{z_1} U_j+ \frac{1+|z'|^2}{2z_1} \sum_{j=2}^3\sum_{i=1}^3U_j U_i\p_{z_j} U_i,\\
\frac{1+|z'|^2}{2z_1}(\p_{z_2} U_3- \p_{z_3} U_2) + \frac{1}{z_1}(z_3 U_2-z_2 U_3)=\omega_1,\\
\frac{1+|z'|^2}{2z_1}\p_{z_3} U_1- \p_{z_1} U_3- \frac{U_3}{z_1}=\omega_2,\\
\p_{z_1} U_2+\frac{U_2}{z_1}- \frac{1+|z'|^2}{2z_1}\p_{z_2} U_1=\omega_3.
\end{cases}\ee

\begin{lemma}\label{equiv}({\bf Equivalence.})
{\it Assume that $C^1$ smooth vector functions $(\rho, {\bf U}, K)$ defined on a domain $\mathbb{D}$ do not contain the vacuum (i.e. $\rho(z_1,z')>0$ in $\Omega$) and the radial velocity $U_1(z_1,z')>0$ in $\mathbb{D}$, then the following two statements are equivalent:
\begin{enumerate}[(i)]
  \item $(\rho, {\bf U}, K)$ satisfy the steady Euler system \eqref{euler-s} in $\mathcal{N}$;
  \item $({\bf U}, K, B)$ satisfy the equation \eqref{ent10},\eqref{ber10},\eqref{vor13} and \eqref{dc}.
\end{enumerate}
}\end{lemma}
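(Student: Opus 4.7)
The plan is to verify both implications by matching the Euler system \eqref{euler-s} term-by-term against the pieces of the deformation-curl reformulation via two standard identities: the Lamb identity $({\bf u}\cdot\nabla){\bf u}=\nabla(|{\bf U}|^2/2)-{\bf u}\times\text{curl}\,{\bf u}$ and the thermodynamic identity $\rho^{-1}\nabla P=\nabla h-T\nabla S$, which, for a polytropic gas, specializes through $h=\frac{\gamma K\rho^{\gamma-1}}{\gamma-1}=B-\frac{1}{2}|{\bf U}|^2$ to $\rho^{-1}\nabla P=\nabla h-\frac{B-\frac{1}{2}|{\bf U}|^2}{\gamma K}\nabla K$. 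Combining the two yields the Crocco-type identity
\be\no
{\bf u}\times\text{curl}\,{\bf u}=\nabla B-\frac{B-\frac{1}{2}|{\bf U}|^2}{\gamma K}\nabla K,
\ee
whose $\tilde{{\bf e}}_j$ components in the spherical projection basis, read off using \eqref{z2}, are precisely the algebraic content of \eqref{vor13} for $j=2,3$ and, for $j=1$, a consequence of \eqref{ber10}--\eqref{ent10}.

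For the implication (i)$\Rightarrow$(ii): the Bernoulli transport \eqref{ber10} is the fifth equation of \eqref{euler-s} after dividing by $U_1>0$. For \eqref{ent10}, I would dot the momentum equations (second through fourth lines of \eqref{euler-s}) with ${\bf U}$, use the continuity equation to eliminate ${\bf u}\cdot\nabla\rho$ and \eqref{ber10} to eliminate ${\bf u}\cdot\nabla(|{\bf U}|^2/2)$, leaving a nonzero multiple of ${\bf u}\cdot\nabla K$. Equations \eqref{vor13} are then the $\tilde{{\bf e}}_2,\tilde{{\bf e}}_3$ components of the Crocco identity solved for $\omega_2,\omega_3$, which is legitimate since $U_1>0$. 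The last three lines of \eqref{dc} are \eqref{z4}, while \eqref{den11} comes from substituting \eqref{den} into the continuity equation \eqref{z3}: because ${\bf u}\cdot\nabla B={\bf u}\cdot\nabla K=0$, one has ${\bf u}\cdot\nabla\rho=-\frac{\rho}{2c^2}{\bf u}\cdot\nabla|{\bf U}|^2$, and rearranging gives \eqref{den11}.

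For (ii)$\Rightarrow$(i): define $\rho$ by \eqref{den} and $P=K\rho^\gamma$; these are smooth on $\mathbb{D}$ by the no-vacuum and $U_1>0$ hypotheses, and the Bernoulli relation $B=\frac{1}{2}|{\bf U}|^2+\frac{\gamma K\rho^{\gamma-1}}{\gamma-1}$ holds by construction. The fifth equation of \eqref{euler-s} is \eqref{ber10} multiplied by $U_1$. Continuity is obtained by reversing the forward computation: the convective derivative of \eqref{den} along ${\bf u}$, using \eqref{ber10}--\eqref{ent10}, reduces to $-\frac{\rho}{2c^2}{\bf u}\cdot\nabla|{\bf U}|^2$, and \eqref{den11} then rearranges exactly to $\text{div}(\rho{\bf u})=0$ in the form of the first line of \eqref{euler-s}. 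For the three momentum equations, combining \eqref{vor13} with the curl identities (last three lines of \eqref{dc}, equivalently \eqref{z4}) recovers the $\tilde{{\bf e}}_2,\tilde{{\bf e}}_3$ components of the Crocco identity; via the Lamb and thermodynamic identities, these are equivalent to the third and fourth lines of \eqref{euler-s}. The $\tilde{{\bf e}}_1$ component, i.e. the second line of \eqref{euler-s}, must be checked separately: substituting \eqref{vor13} into $U_2\omega_3-U_3\omega_2$ and invoking the transport equations \eqref{ber10},\eqref{ent10} to trade $z_1$-derivatives for $z'$-derivatives reduces it to $\partial_{z_1}B-\frac{B-|{\bf U}|^2/2}{\gamma K}\partial_{z_1}K$, as required.

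The main obstacle is exactly this final radial-momentum compatibility check: the second equation of \eqref{euler-s} is not directly visible in the deformation-curl list and must emerge as a constraint forced by \eqref{vor13} together with the Bernoulli and entropy transport equations. All other steps are organized bookkeeping of the geometric factors $\frac{1+|z'|^2}{2z_1}$ and $\frac{1}{z_1}$ that appear because $\{\tilde{{\bf e}}_j\}$ is not a coordinate basis on $\mathcal{N}$; the algebra closely parallels the deformation-curl decomposition developed in \cite{wx19} for the Cartesian steady Euler system, with only these new metric terms to track.
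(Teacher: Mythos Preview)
Your proposal is correct and follows essentially the same route as the paper. The paper's proof is terser but logically identical: it observes that (i)$\Rightarrow$(ii) is already contained in the derivation preceding the lemma, and for (ii)$\Rightarrow$(i) it defines $\rho$ by \eqref{den}, recovers the $U_2$- and $U_3$-momentum equations from \eqref{vor13} together with the last three lines of \eqref{dc}, then obtains the $U_1$-momentum equation from these plus \eqref{ber10} (exactly your ``radial compatibility check,'' since $U_1M_1+U_2M_2+U_3M_3=({\bf u}\cdot\nabla)B$ once \eqref{ent10} is known), and finally recovers continuity from the first line of \eqref{dc} with \eqref{ent10}--\eqref{ber10}. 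Your explicit organization via the Crocco and Lamb identities is just the coordinate-free name for the same computation.
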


\begin{proof}
It suffices to prove that (ii) implies (i). Define $\rho$ by \eqref{den}. Then using the last three equations in \eqref{dc} and the equations \eqref{vor13},  one can directly derive the second and third momentum equations in \eqref{euler-s}. These together with the equation in \eqref{ber10} would imply the first momentum equation in \eqref{euler-s}. Finally, according to the definition of the density \eqref{den}, the continuity equation in \eqref{euler-s} follows directly from the first equation in \eqref{dc} and the equations \eqref{ent10} and \eqref{ber10}.

\end{proof}

\begin{remark}
{\it It is interesting to discuss the role played by the equations \eqref{vor13}. On one hand, we utilize the equations \eqref{vor13} to derive a transport equation \eqref{vor14} for $\omega_1$ which is hyperbolic. The equation \eqref{vor14} is not independent and is regarded here as a byproduct of the divergence freeness of the vorticity and \eqref{vor13}. On the other hand, the equations \eqref{vor13} are also used to constitute a deformation-curl elliptic system for the velocity which represents their ellipticity. Thus the hyperbolicity and ellipticity are coupled in \eqref{vor13} and we decouple them in the above way which turns out to be effective and suit our purpose for solving various problems such as smooth transonic spiral flows in \cite{wxy21b} and the transonic shock problem herein.
}\end{remark}

Define
\be\no
&&V_1(z_1,z')= U_1(z_1,z')- \bar{U}^+(z_1),\q V_j(z_1,z')=U_j(z_1,z'), j=2,3,\\\no
&&V_4(z_1,z')=K(z_1,z')-\bar{K}^+,\ V_5(z_1,z')= B(z_1,z')-\bar{B}^+,\\\no
&& V_6(z')= \xi(z')-r_s,\ \ \ {\bf V}(z_1,z')=(V_1,\cdots, V_5)(z_1,z').
\ee
Then the density and the pressure can be expressed as
\be\label{denw1}
&&\rho=\rho({\bf V})=\b(\frac{\gamma-1}{\gamma (\bar{K}^++V_4)}\b)^{\frac{1}{\gamma-1}}\b(\bar{B}^++V_5-\frac12(\bar{U}^++V_1)^2-\frac12\sum_{i=2}^3 V_i^2\b)^{\frac{1}{\gamma-1}},\\\label{denw2}
&&P=P({\bf V})=\b(\frac{(\gamma-1)^{\gamma}}{\gamma^{\gamma}(\bar{K}^++V_4)}\b)^{\frac{1}{\gamma-1}}\b(\bar{B}^++V_5-\frac12(\bar{U}^++V_1)^2-\frac12\sum_{i=2}^3 V_i^2\b)^{\frac{\gamma}{\gamma-1}}.
\ee

We rewrite the equations \eqref{ent10},\eqref{ber10}, \eqref{vor13} and \eqref{vor14} in terms of $V_1, V_2,\cdots,V_5$. The equations for the hyperbolic quantities $V_4$ and $V_5$ are
\be\label{ent11}
&&\b(\p_{z_1}+\frac{1+|z'|^2}{2z_1 (\bar{U}+V_1)}(V_2\p_{z_2}+V_3\p_{z_3})\b) V_4=0,\\\label{ber11}
&&\b(\p_{z_1}+\frac{1+|z'|^2}{2z_1 (\bar{U}+V_1)}(V_2\p_{z_2}+V_3\p_{z_3})\b) V_5=0.
\ee
The equations for the vorticity $\omega$ are
\be\no
&&\p_{z_1} \omega_1 +\frac{1+|z'|^2}{2z_1 (\bar{U}+V_1)}(V_2\p_{z_2}+V_3\p_{z_3})\omega_1+\frac{2}{z_1} \omega_1-\frac{z_2 V_2 +z_3 V_3}{z_1 (\bar{U}+V_1)}\omega_1\\\no
&&\q\q +\frac{1+|z'|^2}{2z_1}\sum_{i=2}^3\p_{z_i}\b(\frac{V_i}{\bar{U}+V_1}\b)\omega_1+\frac{(1+|z'|^2)^2}{4z_1^2}\bigg\{\p_{z_2}\b(\frac{1}{\bar{U}+V_1}\b)\p_{z_3} V_5\\\no
&&\q\q-\p_{z_3}\b(\frac{1}{\bar{U}+V_1}\b)\p_{z_2} V_5-\p_{z_2}\b(\frac{\bar{B}-\frac{1}{2}\bar{U}^2+V_5-\bar{U}V_1-\frac{1}{2}\sum_{j=1}^3V_j^2} {\gamma (\bar{K}+V_4)(\bar{U}+V_1)}\b)\p_{z_3}V_4\\\label{vor21}
&&\q\q+\p_{z_3}\b(\frac{\bar{B}-\frac{1}{2}\bar{U}^2+V_5-\bar{U}V_1-\frac{1}{2}\sum_{j=1}^3V_j^2} {\gamma (\bar{K}+V_4)(\bar{U}+V_1)}\b)\p_{z_2}V_4\bigg\}=0.
\ee
and
\be\label{vor22}\begin{cases}
\omega_2 =\frac{V_2\omega_1}{\bar{U}+V_1} +\frac{1+|z'|^2}{2z_1(\bar{U}+V_1)} \b(\p_{z_3} V_5-\frac{\bar{B}-\frac{1}{2}\bar{U}^2+V_5-\bar{U}V_1-\frac{1}{2}\sum_{j=1}^3V_j^2} {\gamma (\bar{K}+V_4)}\p_{z_3} V_4\b),\\
\omega_3 =\frac{V_3\omega_1}{\bar{U}+V_1}- \frac{1+|z'|^2}{2z_1 (\bar{U}+V_1)} \b(\p_{z_2} V_5-\frac{\bar{B}-\frac{1}{2}\bar{U}^2+V_5-\bar{U}V_1-\frac{1}{2}\sum_{j=1}^3V_j^2} {\gamma (\bar{K}+V_4)}\p_{z_2} V_4\b).
\end{cases}\ee
The boundary condition on $|z'|=1$ is
\be\label{slip15}
(z_2V_2+z_3 V_3)(z_1,z')=0, \ \ \forall z_1\in [r_s+V_6(z'),r_2],\ \ |z'|=1.
\ee

Then we further compute the equation for $V_1$. Since
\be\no
&&(c^2(\bar{B},\bar{U}^2)-\bar{U}^2)\bar{U}'(r)+\frac{2\bar{U}c^2(\bar{B},\bar{U}^2)}{r}=0,\\\no
&&(c^2(B,|{\bf U}|^2)-U_1^2-c^2(\bar{B},\bar{U}^2)+\bar{U}^2)\\
&&\q=(\gamma-1)V_5-(\gamma+1) \bar{U}V_1-\frac{\gamma+1}{2} V_1^2-\frac{\gamma-1}{2}(V_2^2+V_3^2),
\ee
then it follows from \eqref{den11} that
\be\lab{den15}
d_1\p_{z_1} V_1 + \frac{1+|z'|^2}{2 z_1}\sum_{i=2}^3\p_{z_i} V_i +\frac{2V_1}{z_1}- \frac{\sum_{i=2}^3 z_i V_i}{z_1}+d_2V_1=d_0V_5+ F({\bf V}),
\ee
where
\be\no
&&d_1(z_1)=1-\bar{M}^2(z_1),\,\,  \bar{M}^2(z_1)=\frac{\bar{U}^2(z_1)}{\bar{c}^2(z_1)},\, \, \bar{c}^2(z_1)=c^2(\bar{B},\bar{U}^2(z_1)),\\\no &&d_2(z_1)=\frac{2\bar{M}^2(2+(\gamma-1)\bar{M}^2)}{z_1(1-\bar{M}^2(z_1))}, \ \ d_0(z_1)=-\frac{(\gamma-1)(\bar{U}'(z_1)+\frac{2\bar{U}(z_1)}{z_1})}{\bar{c}^2(z_1)}.
\ee
and
\be\no
&&\bar{c}^2(z_1)F({\bf V})=-\frac{(\gamma-1)}{z_1}(V_5-\bar{U}V_1-\frac{1}{2}\sum_{j=1}^3 V_j^2)\b(2 V_1+\sum_{i=2}^3\frac{1+|z'|^2}{2}\p_{z_i} V_i - z_i V_i\b)\\\no
&&\quad-\b((\gamma-1)V_5-(\gamma+1)\bar{U}V_1\b)\p_{z_1} V_1 +\frac{(\gamma-1)\bar{U}}{z_1}\sum_{j=1}^3 V_j^2 \\\no
&&\quad+ (\bar{U}'+\p_{z_1}V_1)\b(V_1^2+(\gamma-1)\sum_{j=1}^3V_j^2\b)+ (\bar{U}+V_1)\sum_{j=2}^3 V_j\p_{z_1} V_j\\\no
&&\q\q+ \frac{1+|z'|^2}{2z_1}\sum_{j=2}^3 V_j((\bar{U}+V_1)\p_{z_j}V_1+\sum_{i=2}^3 V_i\p_{z_j}V_i).
\ee

Here $F({\bf V})$ and the following $H_i, G_i$ are quadratic and high order terms. Since we need to verify that these terms satisfy some compatibility conditions, we present their exact form.

\subsection{The reformulation of the Rankine-Hugoniot conditions and boundary conditions}\label{22}

It follows from the third and fourth equations in \eqref{rh} that
\be\no
\p_{z_2} \xi(z') =\frac{2\xi(z')}{1+|z'|^2}\frac{J_2}{J},\ \ \  \ \ \p_{z_3}\xi(z') =\frac{2\xi(z')}{1+|z'|^2}\frac{J_3}{J},
\ee
where
\be\no
J   &=&[\rho U_2^2 + P] [\rho U_3^2 +P]- ([\rho U_2 U_3])^2,\\\no
J_2 &=&[\rho U_3^2+P] [\rho U_1 U_2]- [\rho U_1 U_3] [\rho U_2 U_3],\\\no
J_3 &=&[\rho U_2^2+P] [\rho U_1 U_3]- [\rho U_1 U_2] [\rho U_2 U_3].
\ee
Thus one has
\be\label{shock12}\begin{cases}
\p_{z_2} V_6= \frac{2}{1+|z'|^2}\{b_0 r_s V_2(\xi(z'),z')+g_2({\bf V}(\xi,z'), V_6)\},\\
\p_{z_3} V_6= \frac{2}{1+|z'|^2}\{b_0 r_s V_3(\xi(z'),z')+ g_3({\bf V}(\xi,z'), V_6)\},
\end{cases}\ee
where $b_0= \frac{(\bar{\rho}^+ \bar{U}^+)(r_s)}{[\bar{P}(r_s)]}>0$ and
\be\no
g_i=(r_s+V_6(z'))\frac{J_i}{J}(r_s+V_6(z'),z')-b_0 r_s V_i(\xi(z'),z'),\ \ i=2,3.
\ee
The functions $g_i, i=2,3$ are regarded as error terms which can be bounded by
\be\no
|g_i|\leq C_0(|\bm{\Psi}^-(r_s+V_6,z') - \overline{\bm{\Psi}}^-(r_s+V_6)|+|{\bf V}(\xi,z')|^2+|V_6|^2).
\ee

Substituting \eqref{shock12} into the first two equations in \eqref{rh}, one obtains
\be\label{rh1}\begin{cases}
[\rho U_1]=\frac{[\rho U_2] J_2+[\rho U_3] J_3}{J},\\
[\rho U_1^2+P] =\frac{[\rho U_1 U_2] J_2+[\rho U_1 U_3] J_3}{J},\\
[B]=0.
\end{cases}\ee

Note that
\be\no
\frac{d}{dr}(\bar{\rho} \bar{U})(r)= -\frac{2}{r}(\bar{\rho} \bar{U})(r),\ \ \ \frac{d}{dr}(\bar{\rho} \bar{U}^2+ \bar{P})=-\frac{2}{r}\bar{\rho} \bar{U}^2,
\ee
hence
\be\no
[\bar{\rho} \bar{U}](r_s+V_6)=O(V_6^2),\q [\bar{\rho}\bar{U}^2+\bar{P}](r_s+V_6)-\frac{2}{r_s}[\bar{P}(r_s)] V_6=O(V_6^2).
\ee

Denote $\dot{\rho}(z_1,z')= \rho^+(z_1,z')-\bar{\rho}^+(z_1)$, then the first equation in \eqref{rh1} implies that
\be\no
&&-[\bar{\rho} \bar{U}](\xi)+(\rho^- U_1^-)(\xi,z')-(\bar{\rho}^-\bar{U}^-)(\xi)+\frac{[\rho U_2] J_2+[\rho U_3] J_3}{J}\\\no
&&=(\rho^+ U_1^+)(\xi,z')-(\bar{\rho}^+\bar{U}^+)(\xi)\\\no
&&=\bar{\rho}^+(r_s)V_1(\xi,z')+ \bar{U}^+(r_s)\dot{\rho}(\xi,z')+(V_1+\bar{U}^+(\xi)-\bar{U}^+(r_s))\dot{\rho}(\xi,z')\\\no
&&\quad\quad+ (\bar{\rho}^+(\xi)-\bar{\rho}^+(r_s))V_1(\xi,z').
\ee
Thus
\be\no
&&\bar{\rho}^+(r_s) V_1(\xi,z')+{\bar U}^+(r_s) \dot{\rho}(\xi,z')=-[\bar{\rho} \bar{U}](\xi)+ \frac{[\rho U_2] J_2+[\rho U_3] J_3}{J}\\\no
&&\quad+(\rho^- U_1^-)(\xi,z')-(\bar{\rho}^-\bar{U}^-)(\xi)-(V_1+\bar{U}^+(r_s+V_6)-\bar{U}^+(r_s))\dot{\rho}(\xi,z')\\\no
&&\quad-(\bar{\rho}^+(r_s+V_6)-\bar{\rho}^+(r_s))V_1(\xi,z'):=R_{01}({\bf V}(\xi,z'), V_6).
\ee

Similarly, one can conclude from \eqref{rh1} that at points $(\xi(z'),z')$
\be\label{rh2}\begin{cases}
\bar{\rho}^+(r_s) V_1+{\bar U}^+(r_s) \dot{\rho} = R_{01}({\bf V}(\xi,z'), V_6),\\
2(\bar{\rho}^+ \bar{U}^+)(r_s)V_1+ \{(\bar{U}^+(r_s))^2+c^2(\bar{\rho}^+(r_s),\bar{K}^+)\}\dot{\rho}+(\bar{\rho}^+(r_s))^{\gamma} V_4\\
\quad\quad=-\frac{2}{r_s}[\bar{P}(r_s)] V_6+ R_{02}({\bf V}(\xi,z'), V_6),\\
\bar{U}^+(r_s) V_1+ \frac{c^2(\bar{\rho}^+(r_s),\bar{K}^+)}{\bar{\rho}^+(r_s)} \dot{\rho}+ \frac{\ga (\bar{\rho}^+(r_s))^{\gamma-1}}{(\ga-1)} V_4=R_{03}({\bf V}(\xi,z'), V_6),
\end{cases}\ee
where
\be\no
&&R_{02}=-\left\{[\bar{\rho} \bar{U}^2+ \bar{P}](r_s +V_6)-\frac{2}{r_s}[\bar{P}(r_s)] V_6\right\}\\\no
&&\q\q+ (\rho^- (U_1^-)^2 +P^-)(\xi,z')-(\bar{\rho}^-(\bar{U}^-)^2+\bar{P}^-)(\xi)\\\no
&&\quad\quad-\bigg\{(\rho^+ (U_1^+)^2 +P^+)(\xi,z')-(\bar{\rho}^+(\bar{U}^+)^2+\bar{P}^+)(\xi)-2(\bar{\rho}^+ \bar{U}^+)(r_s)V_1\\\no
&&\quad\quad- \{(\bar{U}^+(r_s))^2+c^2(\bar{\rho}^+(r_s),\bar{K}^+)\}\dot{\rho}-(\bar{\rho}^+(r_s))^{\gamma} V_4\bigg\}
+\frac{[\rho U_1 U_2] J_2+[\rho U_1 U_3] J_3}{J},\\\no
&&R_{03}= B^-(r_s+ V_6,z')- \bar{B}^-- \bar{U}^+(r_s+V_6) V_1(\xi,z')-\frac{1}{2}\sum_{j=1}^3 V_j^2(\xi,z')\\\no
&&\quad\quad -\frac{\gamma}{\gamma-1}\left((\bar{K}^+ +V_4(\xi,z'))(\rho(\xi,z'))^{\gamma-1}-\bar{K}^+ (\bar{\rho}^+(\xi))^{\gamma-1}\right)\\\no
&&\quad\quad + \bar{U}^+(r_s) V_1+ \frac{c^2(\bar{\rho}^+(r_s),\bar{K}^+)}{\bar{\rho}^+(r_s)} \dot{\rho}+ \frac{\ga (\bar{\rho}^+(r_s))^{\gamma-1}}{(\ga-1)} V_4.
\ee

Using the equation \eqref{denw1}, $\dot{\rho}$ can be represented as a function of ${\bf V}, V_6$ and there exists a constant $C_0>0$ depending only on the background solution, such that
\be\no
|R_{0i}|\leq C_0(|\bm{\Psi}^-(\xi(z'),z')-\overline{\bm{\Psi}}^-(\xi(z'))|+ |{\bf V}(\xi(z'),z')|^2+|V_6(z')|^2),\ \ i=1,2,3
\ee
In the following, the functions $R_{0i}, i=2,3$ (also the functions $R_j$ in this paper) will represent different quantities that can be bounded as above.

Then solving the algebraic equations in \eqref{rh2}, one gains
\be\label{shock13}\begin{cases}
V_1(\xi(z'),z')=b_1V_6(z')+ R_1({\bf V}(\xi(z'),z'), V_6),\\
V_4(\xi(z'),z')=b_2V_6(z')+ R_2({\bf V}(\xi(z'),z'), V_6),\\
V_5(\xi(z'),z')= B^-(r_s+V_6(z'),z')-\bar{B}^-,
\end{cases}\ee
where
\be\no
&&b_1=\f{\ga \bar{U}^+(r_s)[\bar{P}(r_s)]}{r_s\bar{\rho}^+(r_s) (c^2(\bar{\rho}^+(r_s),\bar{K}^+)-(\bar{U}^+(r_s))^2)}>0,\\\no
&&b_2=\frac{2(\gamma-1)[\bar{P}(r_s)]}{r_s(\bar{\rho}^+(r_s))^{\gamma}}>0
\ee
and
\be\no
&&R_1=\frac{(c^2(\bar{\rho}^+(r_s),\bar{K}^+)+\gamma(\bar{U}^+(r_s))^2) R_{01}-\gamma \bar{U}^+(r_s)R_{02} + (\gamma-1)(\bar{\rho}^+ \bar{U}^+)(r_s)R_{03}}{\bar{\rho}^+(r_s) (c^2(\bar{\rho}^+(r_s),\bar{K}^+)-(\bar{U}^+(r_s))^2)}\\\no
&&:= \sum_{i=1}^3 b_{1i} R_{0i},\\\no
&&R_2=\frac{\gamma-1}{(\bar{\rho}^+(r_s))^{\gamma-1}}\b(\bar{U}^+(r_s)R_{01}-R_{02}+ \bar{\rho}^+(r_s) R_{03}\b):= \sum_{i=1}^3 b_{2i} R_{0i}.
\ee

In the following, the superscript ``+" in $\bar{U}^+, \bar{P}^+,\bar{K}^+, \bar{B}^+$ is ignored to simplify the notations. To derive the boundary condition at the exit, by the definition of the Bernoulli's function one obtains
\be\no
&&V_5(r_2,z')=\bar{U}(r_2)V_1(r_2,z')+ \frac{\p h}{\p P}(\bar{P}(r_2), \bar{K})(P(r_2,z')-\bar{P}(r_2))\\\no
&&\quad+ \frac{\p h}{\p K}(\bar{P}(r_2), \bar{K})V_4+ \frac12 \sum_{j=1}^3 V_j^2 +E({\bf V}(r_2,z')),
\ee
where
\be\no
&&E({\bf V}(z_1,z'))=\frac{\gamma}{\gamma-1}(\bar{K}+V_4)^{\frac1{\gamma}}(P({\bf V}))^{\frac{\gamma-1}{\gamma}}-\frac{\gamma}{\gamma-1}\bar{K}^{\frac1{\gamma}}\bar{P}^{\frac{\gamma-1}{\gamma}}\\\no
&&\quad\quad\quad-\frac{1}{\bar{\rho}(r)}(P({\bf V})- \bar{P})-\frac{\bar{B}-\frac{1}{2}\bar{U}^2(r)}{\gamma \bar{K}} V_4.
\ee

Together with the exit pressure condition \eqref{pres}, one concludes that
\be\label{pres2}
&&V_1(r_2,z')=\frac{V_5(r_2,z')}{\bar{U}(r_2)}- \frac{\bar{B}-\frac12 \bar{U}^2(r_2)}{\gamma\bar{K}\bar{U}(r_2)} V_4(r_2,z') -\frac{\epsilon P_{ex}(z')}{(\bar{\rho}\bar{U})(r_2)}\\\no
&&\quad-\frac{1}{2\bar{U}(r_2)}\sum_{j=1}^3 V_j^2(r_2,z')+E({\bf V}(r_2,z')).
\ee
Here $E$ is an error function that can be bounded by
\be\no
|E({\bf V}(r_2,z'))|\leq C_0|{\bf V}(r_2,z')|^2.
\ee

Then to solve the problem \eqref{euler-s} with \eqref{rh},\eqref{super1},  \eqref{slip} and \eqref{pres} is equivalent to find a function $W_6$ defined on $E$ and vector functions $(V_1,\cdots, V_5)$ defined on the $\mathcal{N}_{V_6}:=\{(z_1,z'): r_s +V_6(z')<z_1<r_2,z'\in E\}$, which solves the equations \eqref{ent11}-\eqref{vor22},\eqref{den15} with boundary conditions \eqref{shock12},\eqref{shock13},\eqref{slip15} and \eqref{pres2}.

\subsection{The coordinate transformation} Since the shock front is a free boundary, to fix the subsonic domain, relabel $W_6(z')=\xi(z')-r_s$, we introduce the following coordinates transformation
\be\label{coor}
y_1=\frac{z_1-\xi(z')}{r_2-\xi(z')}(r_2-r_s) + r_s,\ y'=(y_2,y_3)=z',
\ee
then
\be\no\begin{cases}
z_1= y_1+\frac{r_2-y_1}{r_2-r_s}W_6:=D_0^{W_6},\\
\p_{z_1}=\frac{r_2-r_s}{r_2-r_s-W_6} \p_{y_1}:= D_1^{W_6},\\
\p_{z_2}=\p_{y_2}-\frac{(r_2-y_1)\p_{y_2}W_6}{r_2-r_s-W_6}\p_{y_1}:=D_2^{W_6},\\
\p_{z_3}=\p_{y_3}-\frac{(r_2-y_1)\p_{y_3}W_6}{r_2-r_s-W_6}\p_{y_1}:=D_3^{W_6}.
\end{cases}\ee

Then the domain $\mathcal{N}_+:=\{(z_1,z'):\xi(z')<z_1<r_2,z'\in E\}$ is changed to be
\be\no
\mathbb{D}=\{(y_1,y'): y_1\in (r_s, r_2), y'\in E\}, \ \ \ \ E:=\{y': |y'|<1\}.
\ee
Denote
\be\no
&&\Sigma_{s}=\{(y_1,y_2,y_3):y_1=r_s, y'\in E\},\ \ \ \Sigma_{ex}=\{(y_1,y_2,y_3):y_1=r_2, y'\in E\},\\\no
&&\Sigma_w=\{(y_1,y'): y_1\in [r_s,r_2], |y'|=1\}.
\ee

Set
\be\no\begin{cases}
W_j(y)= V_j(y_1+\f{r_2-y_1}{r_2-r_s}W_6(y'),y'), j=1,\cdots, 5,\\
\tilde{\omega}_j(y)=\omega_j(y_1+\f{r_2-y_1}{r_2-r_s}W_6(y'),y'), j=1,2,3.
\end{cases}\ee

Then the functions $\rho(z_1,z')$ and $P(z_1,z')$ in \eqref{denw1}-\eqref{denw2} are transformed to be
\be\no
&&\tilde{\rho}({\bf W},W_6)=\b(\frac{\gamma-1}{\gamma(\bar{K}+W_4)}\b)^{\frac{1}{\gamma-1}}\b(\bar{B}+W_5-\frac{1}{2}(\bar{U}(D_0^{W_6})+W_1)^2-\frac{1}{2}\sum_{i=2}^3 W_j^2\b)^{\frac{1}{\gamma-1}},\\\no
&&\tilde{P}({\bf W},W_6)=\b(\frac{(\gamma-1)^{\gamma}}{\gamma^{\gamma}(\bar{K}+W_4)}\b)^{\frac{1}{\gamma-1}}\b(\bar{B}+W_5
-\frac{1}{2}(\bar{U}(D_0^{W_6})+W_1)^2-\frac{1}{2}\sum_{i=2}^3 W_j^2\b)^{\frac{\gamma}{\gamma-1}}.
\ee

Now we describe the iteration scheme to solve the transonic shock problem. The shock front will be determined by the first formula in \eqref{shock13} as follows
\be\label{shock400}
W_6(y')=\frac{1}{b_1}W_1(r_s,y')- \frac{1}{b_1}R_1({\bf W}(r_s,y'),W_6(y'))
\ee
where $R_{1}({\bf W}(r_s,y'),W_6(y'))=\sum_{i=1}^3 b_{1i}R_{0i}({\bf W}(r_s,y'),W_6(y'))$. The exact formulas for $R_{0i}, i=1,2,3$ in the polar coordinates $(y_1,a,\tau)$-coordinates are presented in the Appendix to verify the compatibility conditions (See \eqref{rv11}-\eqref{rv41}):

The second and third formulas in \eqref{shock13} yield the boundary datum for the Bernoulli's quantity and the entropy. The function $W_5$ and $W_4$ would satisfy
\be\label{ber31}\begin{cases}
\b(D_1^{W_6}+\frac{1+|z'|^2}{2D_0^{W_6}(\bar{U}+W_1)}(W_2D_2^{W_6}+W_3D_3^{W_6})\b) W_5=0,\\
W_5(r_s,y')=B^-(r_s+W_6(y'),y')-\bar{B}^-,
\end{cases}\ee
and
\be\label{ent31}\begin{cases}
\b(D_1^{W_6}+\frac{1+|z'|^2}{2D_0^{W_6}(\bar{U}+W_1)}(W_2D_2^{W_6}+W_3D_3^{W_6})\b) W_4=0,\\
W_4(r_s,y')=b_2 W_6(y')+R_2({\bf W}(r_s,y'),W_6(y'))
\end{cases}\ee
where $R_{2}({\bf W}(r_s,y'),W_6(y'))=\displaystyle\sum_{i=1}^3 b_{2i}R_{0i}({\bf W}(r_s,y'),W_6(y'))$.

In the $y$-coordinates, \eqref{shock12} is changed to be
\be\label{shock1200}
\p_{y_i} W_6(y')=\frac{2b_0 r_s}{1+|y'|^2} W_i(r_s,y') +\frac{2}{1+|y'|^2} g_i({\bf W}(r_s,y'), W_6),\ \ i=2,3,
\ee
where
\be\label{g21}
g_i=(r_s+W_6(y'))\frac{J_i}{J}(r_s+W_6(y'),y')-b_0 r_s W_i(r_s,y'), \ \ i=2,3.
\ee
and the exact formulas for $J_2, J_3$ and $J$ are given in the Appendix. These will be used to verify the compatibility conditions required below (See \eqref{j211}).

The following reformulation of the jump conditions \eqref{shock1200} is crucial for us to solve the transonic shock problem. Note that \eqref{shock1200} is equivalent to that on $E$ there holds
\be\label{shock15}\begin{cases}
F_2(y'):=\p_{y_2} W_6- \frac{2b_0 r_s W_2(r_s, y')}{1+|y'|^2} -\frac{2}{1+|y'|^2}g_2({\bf W}(r_s,y'),W_6)\equiv 0,\\
F_3(y'):=\p_{y_3} W_6- \frac{2b_0r_s W_3(r_s,y')}{1+|y'|^2}-\frac{2}{1+|y'|^2}g_3({\bf W}(r_s,y'),W_6)\equiv 0.
\end{cases}\ee

Then a key reformulation is the following.
\begin{lemma}\label{equi0}
{\it Let $F_j, j=2,3$ be two $C^1$ smooth functions defined on $\overline{E}$. Then the following two statements are equivalent
\begin{enumerate}[(i)]
  \item $F_2=F_3\equiv 0$ on $\overline{E}$;
  \item $F_2$ and $F_3$ solves the following problem
  \be\label{equi00}\begin{cases}
  \p_{y_2}F_3-\p_{y_3} F_2=0,\ \ &\text{in}\ E,\\
  \p_{y_2} F_2 + \p_{y_3} F_3=0,\ \ &\text{in}\ E,\\
  y_2 F_2+y_3 F_3=0,\ \ &\text{on}\ \ y_2^2+y_3^2=1.
\end{cases}\ee
\end{enumerate}
}\end{lemma}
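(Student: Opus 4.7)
The direction (i)$\Rightarrow$(ii) is immediate: if $F_2\equiv F_3\equiv 0$ then all three relations in \eqref{equi00} hold trivially, so nothing need be said beyond remarking this. All the content is in the reverse direction, which is where I would spend my effort.

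For (ii)$\Rightarrow$(i), my plan is to interpret \eqref{equi00} as a Cauchy--Riemann/Hodge system and reduce it to a homogeneous Neumann problem for the Laplacian on the unit disc $E$. The first equation $\partial_{y_2}F_3-\partial_{y_3}F_2=0$ says that the $1$-form $F_2\,dy_2+F_3\,dy_3$ is closed on $E$; since $E$ is simply connected, it is exact, so there exists $\phi\in C^2(\overline{E})$ (unique up to an additive constant) with
\begin{equation*}
F_2=\partial_{y_2}\phi,\qquad F_3=\partial_{y_3}\phi \quad \text{on } \overline{E}.
\end{equation*}
Substituting into the second equation in \eqref{equi00} gives $\Delta\phi=\partial_{y_2}F_2+\partial_{y_3}F_3=0$ in $E$, so $\phi$ is harmonic. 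Since $(y_2,y_3)$ is the outward unit normal $\nu$ on $\partial E=\{|y'|=1\}$, the boundary condition $y_2F_2+y_3F_3=0$ becomes $\partial_\nu\phi=y_2\partial_{y_2}\phi+y_3\partial_{y_3}\phi=0$ on $\partial E$.

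Thus $\phi$ solves the homogeneous Neumann problem $\Delta\phi=0$ in $E$, $\partial_\nu\phi=0$ on $\partial E$. To conclude I would apply Green's identity: since $F_j\in C^1(\overline{E})$ ensures $\phi\in C^2(\overline{E})$, integration by parts gives
\begin{equation*}
\int_E|\nabla\phi|^2\,dy' = \int_{\partial E}\phi\,\partial_\nu\phi\,dS - \int_E \phi\,\Delta\phi\,dy' = 0,
\end{equation*}
so $\nabla\phi\equiv 0$ on $E$, which yields $F_2=\partial_{y_2}\phi\equiv 0$ and $F_3=\partial_{y_3}\phi\equiv 0$ on $\overline{E}$ by continuity.

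The proof is essentially mechanical once one recognizes the structure, so there is no real obstacle; the only point worth double-checking is the regularity bookkeeping (the $C^1$ hypothesis on $F_j$ is exactly what is needed to make $\phi$ classically $C^2$ up to the boundary, so that the Green identity argument is legitimate and no approximation is required). An alternative, equally short route is to observe that $f(z):=F_2-iF_3$ with $z=y_2+iy_3$ is holomorphic on $E$ by the first two equations, and that the third equation reads $\operatorname{Re}(\bar z f(z))=0$ on $|z|=1$; either route gives the conclusion, but the potential-function formulation meshes better with the deformation--curl framework developed in the rest of the paper, so that is the one I would present.
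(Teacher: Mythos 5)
Your proposal is correct and follows exactly the paper's argument: introduce a potential $\Phi$ from the closedness of $F_2\,dy_2+F_3\,dy_3$, observe that it solves the homogeneous Neumann problem for the Laplacian on the disc, and conclude $\Phi$ is constant. Your added Green's-identity justification and the remark on the complex-analytic alternative are fine but not needed; the approach is the same as the paper's.
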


\begin{proof}
It suffices to show that (ii) will imply (i). Indeed the first equation in \eqref{equi00} implies that there exists a potential function $\Phi(y_2,y_3)\in C^2(\overline{E})$ such that $F_j=\p_{y_j} \Phi, j=2,3$ on $\overline{E}$. Then the second equation in \eqref{equi00} yields
\be\no\begin{cases}
\p_{y_2}^2\Phi+\p_{y_3}^2 \Phi = 0,\ \ &\text{in }E,\\
\sum_{j=2}^3y_j\p_{y_j} \Phi(y')=0,\ \ &\text{on}\ \ |y'|=1.
\end{cases}\ee
Thus one can conclude that $\Phi\equiv \text{const}$ on $\overline{E}$ and thus $F_2=F_3\equiv 0$ on $\overline{E}$.
\end{proof}

Substituting \eqref{shock15} into the first equation in \eqref{equi00} yields that
\be\label{shock17}
&&\frac{1+|y'|^2}{2r_s}(\p_{y_2} W_3-\p_{y_3} W_2)(r_s,y')+\frac{1}{r_s}(y_3 W_2-y_2 W_3)(r_s,y')\\\no
&&=\frac{(1+|y'|^2)^2}{2 b_0 r_s^2} \bigg(\p_{y_3}\bigg(\frac{g_2({\bf W}(r_s,y'), W_6(y'))}{1+|y'|^2}\bigg)-\p_{y_2}\bigg(\frac{g_3({\bf W}(r_s,y'), W_6(y'))}{1+|y'|^2}\bigg)\bigg).
\ee
This gives the boundary data on the shock front for the first component of the vorticity.

Substituting \eqref{shock15} into the second equation in \eqref{equi00} yields that
\be\no
(\p_{y_2}^2+\p_{y_3}^2) W_6(y')-b_0\sum_{j=2}^3 \p_{y_j} \b(\frac{2 r_s W_j}{1+|y'|^2}\b)=2\sum_{j=2}^3\p_{y_j}\b(\frac{g_j({\bf W}(r_s,y'), W_6(y'))}{1+|y'|^2}\b).
\ee
This, together with \eqref{shock400}, implies that
\be\label{shock19}
\sum_{j=2}^3\p_{y_j}^2 W_1(r_s,y')-b_0 b_1\p_{y_j} \b(\frac{2r_s W_j}{1+|y'|^2}\b)(r_s,y') =q_1({\bf W}(r_s,y'),W_6),
\ee
with
\be\no
q_1({\bf W}(r_s,y'),W_6)=\sum_{j=2}^3 2b_1\p_{y_j}\b(\frac{g_j({\bf W}(r_s,y'), W_6(y'))}{1+|y'|^2}\b)+\p_{y_j}^2\{R_1({\bf W}(r_s,y'), W_6(y'))\}.
\ee
The condition \eqref{shock19} is used as the boundary condition on the shock front for the deformation-curl system. Using the equation \eqref{shock400}, the boundary condition in \eqref{equi00} becomes
\be\no
&&\sum_{j=2}^3 y_j\p_{y_j} W_1(r_s,y')-b_0b_1 r_s\sum_{j=2}^3 y_j W_j(r_s,y')=b_1\sum_{j=2}^3 y_j g_j({\bf W}(r_s,y'), W_6(y'))\\\label{shock20}
&&\quad+ \sum_{j=2}^3 y_j\p_{y_j}\{R_1({\bf W}(r_s,y'), W_6(y'))\},\ \ \text{ on } y_1=r_s, |y'|=1.
\ee

We turn to the determination of the vorticity. It follows from \eqref{vor21} that
\be\label{vor400}
\bigg(D_1^{W_6} +\frac{(1+|y'|^2)(W_2D_{2}^{W_6}+W_3D_3^{W_6})}{2D_0^{W_6}(\bar{U}(D_0^{W_6})+W_1)}\bigg)\tilde{\omega}_1+\mu({\bf W},W_6)\tilde{\omega}_1=H_0({\bf W},W_6),
\ee
where
\be\no
&&\mu({\bf W},W_6)=\frac{2}{D_0^{W_6}}-\frac{y_2 W_2 +y_3 W_3}{D_0^{W_6}(\bar{U}(D_0^{W_6})+W_1)}+\frac{1+|y'|^2}{2D_0^{W_6}}\sum_{i=2}^3D_i^{W_6}\b(\frac{W_i}{\bar{U}(D_0^{W_6})+W_1}\b),\\\no
&&H_0({\bf W}, W_6)\\\no
&&=-\frac{(1+|y'|^2)^2}{4(D_0^{W_6})^2}\bigg\{D_2^{W_6}\b(\frac{1}{\bar{U}(D_0^{W_6})+W_1}\b)D_3^{W_6} W_5-D_3^{W_6}\b(\frac{1}{\bar{U}(D_0^{W_6})+W_1}\b)D_2^{W_6} W_5\\\no
&&\quad-D_2^{W_6}\b(\frac{\bar{B}-\frac{1}{2}\bar{U}^2(D_0^{W_6})+W_5-\bar{U}(D_0^{W_6})W_1-\frac{1}{2}\sum_{j=1}^3W_j^2} {\gamma (\bar{K}+W_4)(\bar{U}(D_0^{W_6})+W_1)}\b)D_3^{W_6}W_4\\\no
&&\quad+D_3^{W_6}\b(\frac{\bar{B}-\frac{1}{2}\bar{U}^2(D_0^{W_6})+W_5-\bar{U}(D_0^{W_6})W_1-\frac{1}{2}\sum_{j=1}^3W_j^2} {\gamma (\bar{K}+W_4)(\bar{U}(D_0^{W_6})+W_1)}\b)D_2^{W_6}W_4\bigg\}.
\ee
The equation \eqref{shock17} produces the boundary data for $\tilde{\omega}_1$ at $y_1=r_s$
\be\no
&&\tilde{\omega}_1(r_s,y')=\frac{(1+|y'|^2)^2}{2 b_0 r_s^2} \bigg(\p_{y_3}\bigg(\frac{g_2({\bf W}(r_s,y'), W_6)}{1+|y'|^2}\bigg)-\p_{y_2}\bigg(\frac{g_3({\bf W}(r_s,y'), W_6)}{1+|y'|^2}\bigg)\bigg)\\\label{vor401}
&&\q\q\q+g_4({\bf W}(r_s,y'), W_6),
\ee
with
\be\no
&&g_4({\bf W}(r_s,y'), W_6)=-\frac{W_6(y_3 W_2-y_2 W_3)}{r_s(r_s+W_6)}-\frac{1+|y'|^2}{2}\frac{W_6(\p_{y_2} W_3-\p_{y_3} W_2)}{r_s(r_s+W_6)}\\\label{vor402}
&&\quad-\frac{1+|y'|^2}{2}\frac{(r_2-r_s)(\p_{y_2}W_6\p_{y_1}W_3-\p_{y_3}W_6\p_{y_1}W_2)}{(r_s+W_6)(r_2-r_s-W_6)}).
\ee
After solving the problem \eqref{vor400}-\eqref{vor401} for $\tilde{\omega}_1$, we can conclude from \eqref{vor22} that
\be\no
&&\tilde{\omega}_2=\frac{1+|y'|^2}{2 D_0^{W_6}}D_3^{W_6} W_1- D_1^{W_6} W_3-\frac{W_3}{D_0^{W_6}}\\\no
&&=\frac{W_2\tilde{\omega}_1}{\bar{U}(D_0^{W_6})+W_1} +\frac{1+|y'|^2}{2D_0^{W_6}(\bar{U}(D_0^{W_6})+W_1)} \bigg(D_3^{W_6} W_5\\\no
&&\q\q-\frac{\bar{B}-\frac{1}{2}\bar{U}^2(D_0^{W_6})+W_5-\bar{U}(D_0^{W_6})W_1-\frac{1}{2}\sum_{j=1}^3W_j^2} {\gamma (\bar{K}+W_4)}D_3^{W_6} W_4\bigg),\\\no
&&\tilde{\omega}_3=D_1^{W_6} W_2+ \frac{W_2}{D_0^{W_6}}-\frac{1+|y'|^2}{2 D_0^{W_6}}D_2^{W_6} W_1\\\no
&&=\frac{W_3\omega_1}{\bar{U}(D_0^{W_6})+W_1}- \frac{1+|y'|^2}{2D_0^{W_6} (\bar{U}(D_0^{W_6})+W_1)} \b(D_2^{W_6} W_5\\\no
&&\q\q-\frac{\bar{B}-\frac{1}{2}\bar{U}^2(D_0^{W_6})+ W_5-\bar{U}(D_0^{W_6})W_1-\frac{1}{2}\sum_{j=1}^3W_j^2} {\gamma (\bar{K}+W_4)}D_2^{W_6} W_4\b).
\ee

Collecting the principal terms and putting the quadratic terms on the right hand side, we obtain
\be\label{vor404}
&&\frac{1+|y'|^2}{2y_1}(\p_{y_2} W_3- \p_{y_3} W_2)+\frac{1}{y_1} (y_3 W_2- y_2 W_3)\\\no
&&\q\q=\tilde{\omega}_1(y)+H_1({\bf W},W_6),\\\label{vor405}
&&\frac{1+|y'|^2}{2y_1}\p_{y_3} W_1- \p_{y_1} W_3-\frac{W_3}{y_1} + \frac{\bar{B}-\frac12 \bar{U}^2(y_1)}{\gamma \bar{K} \bar{U}(y_1)} \frac{1+|y'|^2}{2y_1} \p_{y_3} W_4\\\no
&&\quad\quad=\frac{W_2\tilde{\omega}_1}{\bar{U}(D_0^{W_6})+W_1}+\frac{(1+|y'|^2)D_3^{W_6}W_5}{2D_0^{W_6}(\bar{U}(D_0^{W_6})+W_1)}+ H_2({\bf W},W_6),\\\label{vor406}
&&\p_{y_1} W_2+\frac{W_2}{y_1}-\frac{1+|y'|^2}{2 y_1} \p_{y_2} W_1 - \frac{\bar{B}-\frac12 \bar{U}^2(y_1)}{\gamma \bar{K} \bar{U}(y_1)} \frac{1+|y'|^2}{2y_1}\p_{y_2} W_4\\\no
&&\quad\quad=\frac{W_3\tilde{\omega}_1}{\bar{U}(D_0^{V_6})+W_1}-\frac{(1+|y'|^2)D_2^{W_6}W_5}{2D_0^{W_6}(\bar{U}(D_0^{W_6})+W_1)}+H_3({\bf W},W_6),
\ee
where
\be\no
&&H_1({\bf W},W_6)=\frac{D_0^{W_6}-y_1}{y_1 D_0^{W_6}}(y_3W_2-y_2 W_3)\\\no
&&\quad-\frac{1+|y'|^2}{2}\bigg(\frac{D_2^{W_6}W_3}{D_0^{W_6}}-\frac{1}{y_1}\p_{y_2} W_3-\frac{D_3^{W_6}W_2}{D_0^{W_6}}+\frac{1}{y_1}\p_{y_3}W_2\bigg),
\ee
and
\be\no
&&H_2({\bf W},W_6)=(D_1^{W_6}-\p_{y_1}) W_3+(\frac{1}{D_0^{W_6}}-\frac1{y_1}) W_3-\frac{1+|y'|^2}{2}\bigg(\frac{D_3^{W_6}}{D_0^{W_6}}-\frac{1}{y_1}\p_{y_3}\bigg)W_1\\\no
&&\quad-\frac{(1+|y'|^2)}{2D_0^{W_6}(\bar{U}(D_0^{W_6})+W_1)}\frac{W_5-\bar{U}(D_0^{W_6})W_1-\frac{1}{2}\sum_{j=1}^3W_j^2} {\gamma (\bar{K}+W_4)}D_3^{W_6} W_4\\\no
&&\quad-\frac{(1+|y'|^2)}{2D_0^{W_6}(\bar{U}(D_0^{W_6})+W_1)}\frac{\bar{B}-\frac12 \bar{U}^2(D_0^{W_6})}{\gamma (\bar{K}+W_4)}D_3^{W_6} W_4+ \frac{\bar{B}-\frac12 \bar{U}^2(y_1)}{\gamma \bar{K} \bar{U}(y_1)} \frac{1+|y'|^2}{2y_1} \p_{y_3} W_4,\\\no
&&H_3({\bf W},W_6)=-(D_1^{W_6}-\p_{y_1}) W_2-(\frac{1}{D_0^{W_6}}-\frac1{y_1}) W_2+\frac{1+|y'|^2}{2}\bigg(\frac{D_2^{W_6}}{D_0^{W_6}}-\frac{1}{y_1}\p_{y_2}\bigg)W_1\\\no
&&\quad+\frac{(1+|y'|^2)}{2D_0^{W_6}(\bar{U}(D_0^{W_6})+W_1)}\frac{W_5-\bar{U}(D_0^{W_6})W_1-\frac{1}{2}\sum_{j=1}^3W_j^2} {\gamma (\bar{K}+W_4)}D_2^{W_6} W_4\\\no
&&\quad+\frac{(1+|y'|^2)}{2D_0^{W_6}(\bar{U}(D_0^{W_6})+W_1)}\frac{\bar{B}-\frac12 \bar{U}^2(D_0^{W_6})}{\gamma (\bar{K}+W_4)}D_2^{W_6} W_4-\frac{\bar{B}-\frac12 \bar{U}^2(y_1)}{\gamma \bar{K} \bar{U}(y_1)} \frac{1+|y'|^2}{2y_1} \p_{y_2} W_4.
\ee

The boundary condition on the nozzle wall is
\be\label{slip2}
(y_2W_2+y_3 W_3)(y_1,y')=0, \ \ \forall y_1\in [r_s,r_2],\ \ |y'|=1.
\ee

We can further derive from \eqref{den15} that
\be\no
&&\displaystyle d_1(y_1)\p_{y_1} W_1+ \frac{1+|y'|^2}{2y_1}\sum_{j=2}^3 \p_{y_j} W_j +\frac{2W_1}{y_1}-\frac{1}{y_1}\sum_{j=2}^3 y_j W_j+d_2(y_1) W_1\\\label{den20}
&&\quad\quad =d_0(D_0^{W_6}) W_5+ G_0({\bf W},W_6),
\ee
with
\be\no
&&G_0({\bf W},W_6)= \mathbb{F}({\bf W},W_6)-\b(d_1(D_0^{W_6})D_1^{W_6} W_1-d_1(y_1)\p_{y_1} W_1\b)\\\no
&&\q-\frac{1+|y'|^2}{2}\sum_{j=2}^3 (\frac{1}{D_0^{W_6}}D_j^{W_6} W_j-\frac{1}{y_1}\p_{y_j}W_j)\\\no
&&\quad-\b(\frac{1}{D_0^{W_6}}-\frac{1}{y_1}\b)(2 W_1-\sum_{j=2}^3 y_j W_j))-(d_2(D_0^{W_6})-d_2(y_1)) W_1.
\ee
and
\be\no
&&\bar{c}^2(D_0^{W_6})\mathbb{F}({\bf W},W_6)=-\b((\gamma-1)W_5-(\gamma+1)\bar{U}(D_0^{W_6})W_1\b)D_{1}^{W_6} W_1\\\no
&&\quad-(\gamma-1)(W_5-\bar{U}(D_0^{W_6})W_1-\frac{1}{2}\sum_{j=1}^3 W_j^2)\b(\frac{1+|y'|^2}{2 D_0^{W_6}}\sum_{i=2}^3D_{i}^{W_6} W_i \\\no
&&\quad+\frac{1}{D_0^{W_6}} (2W_1-\sum_{i=2}^3 y_i W_i)\b)+ \frac{(\bar{U}'(D_0^{W_6})+D_1^{W_6}W_1)}{2}\b(W_1^2+(\gamma-1)\sum_{j=1}^3W_j^2\b)\\\no
&&\quad+\frac{(\gamma-1)\bar{U}(D_0^{W_6})}{ D_0^{W_6}}\sum_{j=1}^3 W_j^2 + (\bar{U}(D_0^{W_6})+W_1)\sum_{j=2}^3 W_j D_1^{W_6} W_j\\\no
&&\quad+ \frac{1+|y'|^2}{2D_0^{W_6}}\sum_{j=2}^3 W_j((\bar{U}(D_0^{W_6})+W_1)D_{j}^{W_6}W_1+\sum_{i=2}^3 W_i D_{j}^{W_6}W_i).
\ee

Finally, the boundary condition \eqref{pres2} at the exit becomes
\be\label{pres3}
&&W_1(r_2,y')+\frac{\bar{B}-\frac{1}{2}\bar{U}^2(r_2)}{\gamma \bar{K}\bar{U}(r_2)}W_4(r_2,y')=\frac{W_5(r_2,y')}{\bar{U}(r_2)}-\frac{\epsilon P_{ex}(y')}{(\bar{\rho}\bar{U})(r_2)}\\\no
&&\quad\quad\quad-\frac{1}{2\bar{U}(r_2)}\sum\limits_{j=1}^3 W_j^2(r_2,y')+ E({\bf W}(r_2,y'),W_6(y')),
\ee
where
\be\no
&&E({\bf W}(r_2,y'))=\frac{\gamma}{\gamma-1}(\bar{K}+W_4(r_2,y'))^{\frac1{\gamma}}(\tilde{P}({\bf W})(r_2,y'))^{\frac{\gamma-1}{\gamma}}-\frac{\gamma}{\gamma-1}\bar{K}^{\frac1{\gamma}}\bar{P}^{\frac{\gamma-1}{\gamma}}\\\label{err3}
&&\quad\quad-\frac{1}{\bar{\rho}(r_2)}(\tilde{P}({\bf W})(r_2,y')- \bar{P}(r_2))-\frac{\bar{B}-\frac{1}{2}\bar{U}^2(r_2)}{\gamma \bar{K}} W_4(r_2,y').
\ee

Therefore after the coordinates transformation \eqref{coor}, to solve the problem \eqref{euler-s} with \eqref{super1}, \eqref{slip}, \eqref{pres}, and \eqref{rh} is equivalent to solve the following problem:

{\bf Problem TS.} To find a function $W_6$ defined on $E$ and vector functions $(W_1,\cdots, W_5)$ defined on the $\mathbb{D}$, which solve the equations \eqref{ber31}-\eqref{ent31},\eqref{vor400},\eqref{vor404}-\eqref{vor406},\eqref{den20} with boundary conditions \eqref{shock400} ,\eqref{shock17},\eqref{shock19}, \eqref{shock20},\eqref{vor401},\eqref{slip2} and \eqref{pres3}.

Theorem \ref{existence} will follow directly from the following result.
\begin{theorem}\label{main}
{\it Assume the compatibility conditions \eqref{comp1} and \eqref{pressure-cp} hold. There exists a small constant $\epsilon_0>0$ depending only on the background solution and the boundary data $(U_{1,0}^-,U_{2,0}^-,U_{3,0}^-, P_0^-,K_0^-)\in (C^{2,\alpha}(\overline{E}))^5$, $P_{ex}\in C^{2,\alpha}(\overline{E})$ such that if $0\leq \epsilon<\epsilon_0$, the problem \eqref{ber31},\eqref{ent31},\eqref{vor400},\eqref{vor404}-\eqref{vor406},\eqref{den20} with boundary conditions \eqref{shock400}, \eqref{shock17}, \eqref{shock19},\eqref{shock20},\eqref{vor401},\eqref{slip2} and \eqref{pres3} has a unique solution $(W_1,W_2,W_3,W_4,W_5)(y)$ with the shock front $\mathcal{S}: y_1=W_6(y')$ satisfying the following properties.
\begin{enumerate}[(i)]
  \item The function $W_6(y')\in C^{3,\alpha}(\overline{E})$ satisfies
  \be\no
  \|W_6(y')-r_s\|_{C^{3,\alpha}(\overline{E})}\leq C_*\epsilon,
  \ee
  and
  \be\label{shock91}
  &&\p_{a}W_6(1,\tau)=0,\ \ \forall \tau\in \mathbb{T}_{2\pi},
  \ee
  where $C_*$ is a positive constant depending only on the background solution and the supersonic incoming flow and the exit pressure.
  \item The solution $(W_1,W_2,W_3,W_4,W_5)(y)\in C^{2,\alpha}(\overline{\mathbb{D}})$ satisfies the estimate
  \be\no
  \sum\limits_{j=1}^5\|W_j\|_{C^{2,\alpha}(\overline{\mathbb{D}})}\leq C_*\epsilon
  \ee
  and the compatibility conditions
  \be\label{sub52}
  \begin{cases}
  W_a(y_1,1,\tau)=(\p_a^2 W_a+ \p_a W_a)(y_1,1,\tau)=0,\ \forall (y_1,\tau)\in [r_s,r_2]\times \mathbb{T}_{2\pi},\\
  (\p_a W_1,\p_a W_{\tau},\p_a W_4,\p_a W_5))(y_1,1,\tau)=0,\ \ \forall (y_1,\tau)\in [r_s,r_2]\times \mathbb{T}_{2\pi}.
  \end{cases}\ee
  Here the polar coordinate $y_2=a\cos\tau, y_3=a\sin\tau$ is also used and the functions $W_a,W_{\tau}$ are defined as in \eqref{polar2} similarly.
\end{enumerate}
}\end{theorem}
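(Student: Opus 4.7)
\smallskip

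\noindent\textbf{Proof plan for Theorem \ref{main}.}
The plan is to design a nonlinear iteration scheme in which, at each step, a given small perturbation $(\check{W}_1,\ldots,\check{W}_5,\check{W}_6)$ in a closed ball
\[
\mathcal{B}_{\delta}=\bigl\{(\check{\bf W},\check{W}_6):\ \|\check{\bf W}\|_{C^{2,\alpha}(\overline{\mathbb{D}})}+\|\check{W}_6\|_{C^{3,\alpha}(\overline{E})}\le \delta\bigr\}
\]
with $\delta=C_*\epsilon$, determines the next iterate $({\bf W},W_6)$ by solving a sequence of decoupled linear problems, and then to show that this map is a contraction in a lower-regularity norm. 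All nonlinear/quadratic terms ($F$, $H_i$, $G_0$, $g_i$, $R_{0i}$, $E$, etc.) are frozen at $\check{\bf W},\check{W}_6$, so the linear problems inherit the constant-coefficient principal parts of the background solution.

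\smallskip

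\noindent\textbf{Step 1: hyperbolic modes.} First I would determine the shock position via the algebraic relation \eqref{shock400}, i.e.\ set $W_6(y')=b_1^{-1}\check{W}_1(r_s,y')-b_1^{-1}R_1(\check{\bf W}(r_s,y'),\check{W}_6(y'))$; this directly gives the required $C^{3,\alpha}$ regularity because $\check{W}_1\in C^{2,\alpha}$ and one extra spatial derivative is available from the linear part in $y'$. Then I would integrate the transport equations \eqref{ber31} and \eqref{ent31} for $W_5,W_4$ along the characteristics of the frozen vector field $D_1^{\check{W}_6}+\frac{1+|y'|^2}{2D_0^{\check{W}_6}(\bar U+\check{W}_1)}(\check{W}_2 D_2^{\check{W}_6}+\check{W}_3 D_3^{\check{W}_6})$, using the boundary data at $y_1=r_s$ prescribed by \eqref{shock400}--\eqref{ent31}. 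Then the scalar transport equation \eqref{vor400} for the radial vorticity $\tilde{\omega}_1$ is solved with initial datum \eqref{vor401}. The characteristic field is transverse to $\Sigma_s$ for small $\delta$ since $\bar U(r_s)>0$, so classical existence and $C^{2,\alpha}$ estimates apply.

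\smallskip

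\noindent\textbf{Step 2: elliptic mode via a potential.} With $W_4,W_5,\tilde{\omega}_1$ in hand, the last four equations of \eqref{vor404}--\eqref{vor406} and \eqref{den20} form a first-order deformation--curl system for $(W_1,W_2,W_3)$. To handle the nonzero curl I would first construct a divergence-free vector field $\mathbf{W}^{(c)}$ with the prescribed curl (the hyperbolic modes' contribution plus the $H_i$ remainders) satisfying the wall slip condition \eqref{slip2}; this can be done by standard Hodge/Biot--Savart arguments on the cylindrical shell $\mathbb{D}$ using the symmetric extension across $|y'|=1$ made possible by the compatibility conditions. Writing $(W_1,W_2,W_3)=\mathbf{W}^{(c)}+\nabla\Phi$ reduces the problem to a single linear second-order equation for the potential $\Phi$, whose principal part is $d_1(y_1)\partial_{y_1}^2\Phi+\frac{1+|y'|^2}{2y_1}\Delta_{y'}\Phi+\text{l.o.t.}$, which is uniformly elliptic since $d_1=1-\bar M^2>0$ in the subsonic background. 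The boundary conditions come from: (i) the slip condition \eqref{slip2} which becomes a homogeneous Neumann condition for $\Phi$ on $\Sigma_w$; (ii) the exit pressure relation \eqref{pres3} which, combined with the known $W_4,W_5$, gives an oblique condition $\partial_{y_1}\Phi+c_0\Phi=\text{data}$ on $\Sigma_{ex}$ after a harmless normalization; and, crucially, (iii) the second-order shock condition \eqref{shock19}, which after substituting $W_j=W_j^{(c)}+\partial_{y_j}\Phi$ and using the algebraic relation \eqref{shock400} to express $W_6$ in terms of $W_1|_{\Sigma_s}$, reduces to an oblique boundary condition for $\Phi$ on $\Sigma_s$ whose coefficient has the correct sign so that the Lopatinskii condition holds (as noted in Remark \ref{r3}). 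Schauder theory for oblique boundary value problems on the corner domain $\mathbb{D}$ then yields a unique $C^{3,\alpha}$ potential $\Phi$ with the estimate $\|\Phi\|_{C^{3,\alpha}}\le C(\delta+\epsilon)$.

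\smallskip

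\noindent\textbf{Step 3: compatibility at the corners and contraction.} The main technical obstacle I foresee is preserving the compatibility conditions \eqref{sub52} and \eqref{shock91} at the curves $\{|y'|=1\}\cap\Sigma_s,\Sigma_{ex}$, so that all iterates live in the same corner-compatible function space and Schauder estimates up to $\Sigma_w$ are available via even/odd reflection. I would verify, following the computations in \S\ref{appendix}, that: the quadratic terms $F$, $H_i$, $G_0$, $R_{0i}$, $g_i$ all vanish to the right order on $|a|=1$ (this is where the identity $\frac{d}{da}(a+1/a)|_{a=1}=0$ of Remark \ref{r-super} is used); the transport steps preserve \eqref{sub52} because the wall $|y'|=1$ is a characteristic surface; and the elliptic problem for $\Phi$ admits a reflection across $|y'|=1$ producing a problem on a full cylinder with smooth data. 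Once the iteration map $\mathcal{T}:(\check{\bf W},\check{W}_6)\mapsto({\bf W},W_6)$ is shown to map $\mathcal{B}_\delta$ into itself, I would prove contraction in the weaker norm $C^{1,\alpha}\times C^{2,\alpha}$ by subtracting two iterates: the differences satisfy linear problems with zero boundary data and right-hand sides of size $\|\check{\bf W}-\check{\bf W}'\|_{C^{1,\alpha}}\cdot\delta$, yielding a contraction factor $\le C\delta\le 1/2$ for small $\epsilon$. A standard interpolation argument then upgrades the fixed point to $C^{2,\alpha}\times C^{3,\alpha}$, and the equivalence Lemma \ref{equiv} together with Lemma \ref{equi0} recovers a genuine solution of the original free boundary problem, proving Theorem \ref{existence}.
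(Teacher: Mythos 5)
Your overall architecture (shock position from the algebraic Rankine--Hugoniot relation, transport of $B$, $K$, $\omega_1$, reduction of the deformation--curl system to a potential with oblique boundary conditions, reflection across $|y'|=1$ for corner regularity, contraction in a weaker norm) matches the paper's, but there is one gap that breaks the scheme as written. You freeze the entropy and the shock position entirely at the previous iterate: $W_6$ is computed from $\check{W}_1(r_s,\cdot)$ and then $W_4$ is transported from the datum $b_2W_6+R_2$, so that by the time you reach the elliptic step you treat $W_4$ as ``known.'' But the coupling $W_4\approx \frac{b_2}{b_1}W_1(r_s,y')$ (see \eqref{ent43}) is a \emph{principal, linear} coupling with an $O(1)$ coefficient, not a quadratic error. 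It enters the curl equations through the terms $d_3(y_1)\p_{y_j}W_1(r_s,y')$ in \eqref{vor405}--\eqref{vor406} and the exit condition through $d_3(r_2)W_1(r_s,y')$ in \eqref{pres4}. If these are evaluated at $\check{{\bf W}}$, the output satisfies only $\|W_1\|\le C\epsilon+C'\|\check{{\bf W}}\|$ with $C'=O(1)$, so the map is neither a self-map of $\mathcal{B}_{C_*\epsilon}$ nor a contraction. The paper avoids this by keeping the new $W_1(r_s,y')$ in these terms, which produces the \emph{nonlocal} elliptic equation \eqref{den41} (a zeroth-order term $b_0b_1d_4(y_1)\phi(r_s,y')$ in the interior and a boundary condition at $\Sigma_{ex}$ coupled to the trace at $\Sigma_s$); its unique solvability is exactly where the signs $d_3,d_4,b_4>0$ are used (Remark \ref{r3}), via the eigenfunction expansion of Lemma \ref{eig}, Lax--Milgram/Fredholm, and a maximum-principle argument on each Fourier mode. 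Your proposal never confronts this nonlocal problem, and invoking standard Schauder theory for oblique problems does not cover it.

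Two secondary points. First, your claim that \eqref{shock400} ``directly gives'' $W_6\in C^{3,\alpha}$ is not justified: that formula only yields $C^{2,\alpha}$ since $W_1(r_s,\cdot)\in C^{2,\alpha}$. The extra derivative comes from the gradient relations \eqref{shock1200}, i.e.\ $\p_{y_j}W_6=\frac{2b_0r_s}{1+|y'|^2}W_j(r_s,\cdot)+\cdots$ with $W_j(r_s,\cdot)\in C^{2,\alpha}$, which become available only after one verifies $F_2=F_3\equiv0$ via Lemma \ref{equi0}; this is done at the end of the iteration step, not at the beginning. Second, constructing a field $\mathbf{W}^{(c)}$ with ``the prescribed curl'' requires the curl data $(G_1,G_2,G_3)$ to be divergence-free, which fails after linearization; the paper introduces the auxiliary unknown $\Upsilon$ in \eqref{den32}--\eqref{den33} precisely to restore this solvability condition and then proves $\Upsilon\equiv0$ at the fixed point. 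You would need an analogous correction (or a projection onto divergence-free data together with an argument that the correction vanishes for the true solution) for Step 2 to go through.
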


\section{Proof of Theorem \ref{main}}\label{proof}

We are ready to prove Theorem \ref{main}. The solution class $\Xi$ consists of the vector functions
$$(W_1,\cdots, W_5,W_6)\in (C^{2,\alpha}(\overline{\mathbb{D}}))^5\times C^{3,\alpha}(\overline{E})$$
satisfying the compatibility conditions (which is precisely \eqref{shock91} and \eqref{sub52})
\be\label{cp100}\begin{cases}
(\p_a W_1,\p_a W_{\tau},\p_a W_4, \p_a W_5)(y_1,1,\tau)=0,\ \ \ \forall (y_1,\tau)\in [r_s,r_2]\times \mathbb{T}_{2\pi},\\
W_a(y_1,1,\tau)=(\p_a^2 W_a+\p_a W_a)(y_1,1,\tau)=0,\ \ \forall (y_1,\tau)\in [r_s,r_2]\times \mathbb{T}_{2\pi},\\
\p_a W_6(1,\tau)=0,\ \ \ \ \forall \tau\in \mathbb{T}_{2\pi}
\end{cases}\ee
and the estimate
\be\no
\|({\bf W},W_6)\|_{\Xi}:=\sum_{j=1}^5 \|W_j\|_{C^{2,\alpha}(\overline{\mathbb{D}})}+\|W_6\|_{C^{3,\alpha}(\overline{E})}\leq \delta_0.
\ee
with $\delta_0$ being a small positive constant to be determined later.

For any $(\hat{{\bf W}},\hat{W}_6)\in \Xi$, we will define an operator $\mathcal{T}$ mapping $\Xi$ to itself, the unique fixed point of $\mathcal{T}$ in $\Xi$ will be the solution to the {\bf Problem TS}. Note that the principal terms in the deformation-curl system \eqref{vor404}-\eqref{vor406} and \eqref{den99} contain hyperbolic quantities $W_4$ and $W_5$, it is still hyperbolic-elliptic coupled. Since $W_4$ and $W_5$ are conserved along the trajectory, one may represent $W_4$ and $W_5$ as functions of their boundary data at the entrance. Note that the boundary data for $W_4$ and $W_5$ involves the shock position $W_6$, using the condition \eqref{shock400}, one can see that the principal term in $W_4(y)$ is given by a scalar multiple of $W_1(r_s,y')$ and $W_5$ can be regarded as a higher order term. Substituting these into \eqref{vor404}-\eqref{vor406}, \eqref{den20} and \eqref{pres3}, we derive a deformation-curl first order elliptic system for $(W_1, W_2,W_3)$ containing a nonlocal term $W_1(r_s,y')$ with some boundary conditions involving only $(W_1, W_2,W_3)$, from which $(W_1,W_2,W_3)$ can be solved uniquely. Then the entropy $W_4$ and the shock front $W_6$ are uniquely determined. Now we give a detailed derivation for this procedure.

{\bf Step 1.} The shock front is uniquely determined by
\be\label{shock41}
W_6(y')=\frac{1}{b_1} W_1(r_s,y')-\frac{R_1(\hat{{\bf W}}(r_s,y'),\hat{W}_6)}{b_1},
\ee
provided $W_1(r_s,y')$ is uniquely determined.

{\bf Step 2.} We solve the transport equations for the Bernoulli's quantity and the entropy respectively. The Bernoulli's quantity will be determined by
\be\no\begin{cases}
\bigg(D_1^{\hat{W}_6}+\frac{1+|y'|^2}{2D_0^{\hat{W}_6}(\bar{U}(D_0^{\hat{W}_6})+\hat{W}_1)}(\hat{W}_2D_2^{\hat{W}_6}+\hat{W}_3D_3^{\hat{W}_6})\bigg) W_5=0,\\
W_5(r_s,y')=B^-(r_s+\hat{W}_6(y'),y')-\bar{B}^-.
\end{cases}\ee
Set
\be\no\begin{cases}
K_2(y):=\frac{(1+|y'|^2)(r_2-r_s-\hat{W}_6)\hat{W}_2}{2(r_2-r_s)D_0^{\hat{W}_6}(\bar{U}(D_0^{\hat{W}_6})+\hat{W}_1)+(1+|y'|^2)(y_1-r_2)\sum_{j=2}^3 \hat{W}_j \p_{y_j}\hat{W}_6},\\
K_3(y):=\frac{(1+|y'|^2)(r_2-r_s-\hat{W}_6)\hat{W}_3}{2(r_2-r_s)D_0^{\hat{W}_6}(\bar{U}(D_0^{\hat{W}_6})+\hat{W}_1)+(1+|y'|^2)(y_1-r_2)\sum_{j=2}^3 \hat{W}_j \p_{y_j}\hat{W}_6},
\end{cases}\ee
Then $K_2,K_3\in C^{2,\alpha}(\overline{\mathbb{D}})$. The function $W_5$ is conserved along the trajectory determined by the following ODE system
\begin{eqnarray}\label{char1} \left\{\begin{array}{ll}
\frac{d \bar{y}_2(t; y)}{dt}=K_2(t,\bar{y}_2(t;y),\bar{y}_3(t;y)),\ \ \forall t\in [r_s,r_2],\\
\frac{d \bar{y}_3(t; x)}{dt}=K_3(t,\bar{y}_2(t;y),\bar{y}_3(t;y)),\ \ \forall t\in [r_s,r_2],\\
\bar{y}_2(y_1; y)=y_2,\ \bar{y}_3(y_1;y)=y_3.
\end{array}\right. \end{eqnarray}
To our purpose, we need the polar coordinates $y_2=a\cos\tau, y_3=a\sin\tau$ and set
\be\no
&&K_a(y_1,a,\tau)= \cos\tau K_2 + \sin\tau K_3\\\no
&&=\frac{(1+a^2)(r_2-r_s-\hat{W}_6)\hat{W}_a}{2(r_2-r_s)D_0^{\hat{W}_6}(\bar{U}(D_0^{\hat{W}_6})+\hat{W}_1)+(1+a^2)(y_1-r_2)(\hat{W}_a \p_a\hat{W}_6+\frac{1}{a}\p_{\tau}\hat{W}_6)},\\\no
&&K_{\tau}(y_1,a,\tau)= -\sin\tau K_2+ \cos\tau K_3\\\no
&&=\frac{(1+a^2)(r_2-r_s-\hat{W}_6)\hat{W}_{\tau}}{2(r_2-r_s)D_0^{\hat{W}_6}(\bar{U}(D_0^{\hat{W}_6})+\hat{W}_1)+(1+a^2)(y_1-r_2)(\hat{W}_a \p_a\hat{W}_6+\frac{1}{a}\p_{\tau}\hat{W}_6)}.
\ee
Denote
\be\no
&&\bar{y}_2(t; y_1,a,\tau)=A(t;y_1,a,\tau)\cos \vartheta(t;y_1,a,\tau),\\\no
&&\bar{y}_3(t; y_1,a,\tau)=A(t;y_1,a,\tau)\sin \vartheta(t;y_1,a,\tau),
\ee
then $A^2(t;y_1,a,\tau)=\sum_{j=2}^3 \bar{y}_j^2(t; y_1,a,\tau)$ and
\be\no
&&\frac{d}{dt}A(t;y_1,a,\tau)=\frac{1}{A(t;y_1,a,\tau)}\sum_{j=2}^3 \bar{y}_j(t;y_1,a,\tau) K_j(t,\bar{y}_2(t;y_1,a,\tau),\bar{y}_3(t;y_1,a,\tau))\\\no
&&=K_a(t;A(t;y_1,a,\tau),\vartheta(t;y_1,a,\tau)),\ \ \forall t\in [r_s,r_2].
\ee
Since $y_2 \hat{W}_2 +y_3 \hat{W}_3=0$ holds on $\Sigma_w$, $K_a(y_1,1,\tau)=0$ on $\Sigma_w$. Then the uniqueness theorem for the ODE theory implies that for any $y\in \Sigma_w$, i.e. $y_2^2+y_3^2=1$, one has
\be\label{char21}
A(t;y_1,1,\tau)=\sqrt{\bar{y}_2^2(t; y_1,1,\tau)+\bar{y}_3^2(t; y_1,1,\tau)}=1,\forall(t,y_1,\tau)\in [r_s,r_2]^2\times \mathbb{T}_{2\pi}.
\ee

Denote $\vec{\beta}(y)=(\beta_2(y),\beta_3(y))=(\bar{y}_2(r_s;y),\bar{y}_3(r_s;y))$, then
\be\label{char22}
\beta_2^2(y_1,1,\tau)+ \beta_3^2(y_1,1,\tau)= 1,\ \ (y_1,\tau)\in [r_s,r_2]\times \mathbb{T}_{2\pi}.
\ee
It follows from \eqref{char1} that
\be\no
\sum_{j=2}^3\|\beta_j(y)-y_j\|_{C^{2,\alpha}(\overline{\mathbb{D}})}\leq C_*\|(\hat{{\bf W}},\hat{W}_6)\|_{\Xi}.
\ee

Since $W_5$ is conserved along the trajectory, one has
\be\no
W_5(y)=W_5(r_s,\vec{\beta}(y))=B^-(r_s+\hat{W}_6(\vec{\beta}(y)),\vec{\beta}(y))-\bar{B}^-
\ee
and the following estimate holds
\be\label{ber43}
&&\|W_5\|_{C^{2,\alpha}(\overline{\mathbb{D}})}\leq C_*\epsilon+C_*\epsilon (\|\hat{W}_6\|_{C^{2,\alpha}(\overline{E})}+\sum_{j=2}^3\|\beta_j-y_j\|_{C^{2,\alpha}(\overline{\mathbb{D}})})\\\no
&&\leq C_*(\epsilon+\epsilon\|(\hat{{\bf W}}, \hat{W}_6)\|_{\Xi})\leq C_*(\epsilon+\epsilon \delta_0).
\ee

Employing the compatibility conditions \eqref{cp100}, simple calculations show that
\be\no
K_a(y_1,1,\tau)=\p_a\bigg(\frac{K_{\tau}}{a}\bigg)(y_1,1,\tau)=0,\ \ \forall (y_1,\tau)\in [r_s,r_2]\times \mathbb{T}_{2\pi}.
\ee
To prove that $W_5$ satisfies the compatibility condition, one may regard $W_5$ as a function of $(y_1,a,\tau)$, which satisfies
\be\no\begin{cases}
\p_{y_1} W_5 + K_a \p_{a} W_5 + \frac{K_{\tau}}{a} \p_{\tau} W_5=0,\ \\
W_5(r_s,a,\tau)=B^-(r_s+\hat{W}_6(a,\tau),a,\tau)-\bar{B}^-.
\end{cases}\ee
Differentiating with respect to $a$, and restricting the resulting equation on $a=1$, one gets
\be\no\begin{cases}
\bigg(\p_{y_1} (\p_a W_5) +\frac{K_{\tau}}{a}\p_{\tau} \p_a W_5+ \p_a K_a \p_{a} W_5\bigg)(y_1,1,\tau)=0, \forall (y_1,\tau)\in [r_s,r_2]\times \mathbb{T}_{2\pi},\\
(\p_a W_5)(r_s,1,\tau)=\p_{z_1} B^-(r_s+\hat{W}_6(1,\tau),1,\tau)\p_a \hat{W}_6(1,\tau)\\
\q\q\q\q\q+\p_a B^-(r_s+\hat{W}_6(1,\tau),1,\tau)=0,\ \ \forall \tau\in \mathbb{T}_{2\pi}.
\end{cases}\ee
By the uniqueness theorem of the ordinary differential equations, one has
\be\label{ber48}
\p_a W_5(y_1,1,\tau)=0, \forall (y_1,\tau)\in [r_s,r_2]\times \mathbb{T}_{2\pi}.
\ee

The function $W_4$ satisfies
\be\no\begin{cases}
\bigg(D_1^{\hat{W}_6}+\frac{1+|y'|^2}{2D_0^{\hat{W}_6}(\bar{U}(D_0^{\hat{W}_6})+\hat{W}_1)}(\hat{W}_2D_2^{\hat{W}_6}+\hat{W}_3D_3^{\hat{W}_6})\bigg)W_4=0,\\
W_4(r_s,y')= b_2 W_6(y')+R_2(\hat{{\bf W}}(r_s,y'),\hat{W}_6(y')),\ \ \forall y'\in \overline{E}.
\end{cases}\ee

By the characteristic method and the equation \eqref{shock41}, one has
\be\no
&&W_4(y)=W_4(r_s,\vec{\beta}(y))\\\label{ent42}
&&=b_2 W_6(\vec{\beta}(y))+ R_2(\hat{{\bf W}}(r_s,\vec{\beta}(y)),\hat{W}_6(\vec{\beta}(y)))\\\no
&&=b_2 W_6(y')+ b_2(W_6(\vec{\beta}(y))-W_6(y'))+ R_2(\hat{{\bf W}}(r_s,\vec{\beta}(y)),\hat{W}_6(\vec{\beta}(y)))\\\no
&&=\frac{b_2}{b_1} W_1(r_s,y')+b_2(W_6(\vec{\beta}(y))-W_6(y'))+R_3(\hat{{\bf V}}(r_s,\vec{\beta}(y)),\hat{W}_6(\vec{\beta}(y))),
\ee
where
\be\no
R_3(\hat{{\bf W}}(r_s,y'),\hat{W}_6)=R_2(\hat{{\bf W}}(r_s,y'),\hat{W}_6)-\frac{b_2 R_1(\hat{{\bf W}}(r_s,y'),\hat{W}_6)}{b_1}
\ee
Since $W_6(y')$ is still unknown, one may replace \eqref{ent42} by
\be\label{ent43}
W_4(y_1,y')=\frac{b_2}{b_1} W_1(r_s,y')+R_4(\hat{{\bf W}}(r_s,\vec{\beta}(y)),\hat{W}_6(\vec{\beta}(y))),
\ee
where
\be\no
R_4=b_2(\hat{W}_6(\vec{\beta}(y))-\hat{W}_6(y'))+R_3(\hat{{\bf W}}(r_s,\vec{\beta}(y)),\hat{W}_6(\vec{\beta}(y))).
\ee
Therefore $W_4$ is decomposed as a scalar multiple of $W_1(r_s,y')$ with higher order terms satisfying
\be\no
&&\|W_4\|_{C^{2,\alpha}(\overline{\mathbb{D}})}\leq C_*\|W_1(r_s,\cdot)\|_{C^{2,\alpha}(\overline{E})}+\|R_4\|_{C^{2,\alpha}(\overline{\mathbb{D}})}\\\no
&&\leq C_*(\|W_1(r_s,\cdot)\|_{C^{2,\alpha}(\overline{E})}+\|\hat{W}_6\|_{C^{3,\alpha}(\overline{E})}\sum_{j=2}^3\|\beta_j(y)-y_j\|_{C^{2,\alpha}(\overline{\mathbb{D}})})\\\no
&&\q\q+ C_*(\epsilon \|(\hat{{\bf W}}, \hat{W}_6)\|_{\Xi}+\|(\hat{{\bf W}}, \hat{W}_6)\|_{\Xi}^2)\\\no
&&\leq C_*\|W_1(r_s,\cdot)\|_{C^{2,\alpha}(\overline{E})}+ C_*(\epsilon \delta_0+ \delta_0^2).
\ee

Furthermore, since $(\hat{{\bf W}}, \hat{W}_6)\in\Sigma$ satisfies the compatibility conditions \eqref{cp100} and the supersonic incoming flows $\bm{\Psi}^-$ satisfies \eqref{comp2}, using the formulas in the Appendix, one could verify by direct but tedious computations that for $J_a=(\cos\tau J_2 + \sin\tau J_3)$ and $J_{\tau}=-\sin\tau J_2 + \cos\tau J_3$
\be\label{j211}\begin{cases}
J_a(\hat{{\bf W}}(r_s,1,\tau),\hat{W}_6(1,\tau))=\p_a\{ J_{\tau}(\hat{{\bf W}}(r_s,1,\tau),\hat{W}_6(1,\tau))\}=0,\ \forall \tau\in \mathbb{T}_{2\pi},\\
\cos\tau g_2(\hat{{\bf W}}(r_s,1,\tau),\hat{W}_6(1,\tau))+ \sin\tau g_3(\hat{{\bf W}}(r_s,1,\tau),\hat{W}_6(1,\tau))=0,\ \forall \tau\in \mathbb{T}_{2\pi},\\
\p_{a}\{J(\hat{{\bf W}}(r_s,1,\tau),\hat{W}_6(1,\tau))\}=0,\ \forall \tau\in \mathbb{T}_{2\pi},
\end{cases}\ee
and for all $j=1,2,3$
\be\label{rv11}
\p_{a}\{ R_{0j}(\hat{{\bf W}}(r_s,1,\tau),\hat{W}_6(1,\tau))\}=0,\ \ \forall \tau\in \mathbb{T}_{2\pi}.
\ee
Thus for $k=1,2,3$
\be\label{rv41}
\p_{a}\{ R_{k}(\hat{{\bf W}}(r_s,1,\tau),\hat{W}_6(1,\tau))\}=0,\ \ \forall \tau\in \mathbb{T}_{2\pi}.
\ee
The proof of \eqref{j211}-\eqref{rv41} is delayed to the Appendix \S\ref{appendix}.

Note that $b_2\hat{W}_6(\vec{\beta}(y))+R_3(\hat{{\bf W}}(r_s,\vec{\beta}(y)),\hat{W}_6(\vec{\beta}(y)))$ is the unique solution to the transport equation
\be\no\begin{cases}
\bigg(D_1^{\hat{W}_6}+\frac{1+|y'|^2}{2D_0^{\hat{W}_6}(\bar{U}(D_0^{\hat{W}_6})+\hat{W}_1)}(\hat{W}_2D_2^{\hat{W}_6}+\hat{W}_3D_3^{\hat{W}_6})\bigg) H(y)=0,\ \ \forall y\in \mathbb{D},\\
H(r_s,y')=b_2\hat{W}_6(y')+R_3(\hat{{\bf W}}(r_s,y'),\hat{W}_6(y')),\ \ \forall y'\in E.
\end{cases}
\ee
Using $\p_a \hat{W}_6(1,\tau)=0$ and \eqref{rv41}, as in \eqref{ber48} one can prove that for all $(y_1,\tau)\in [r_s,r_2]\times\mathbb{T}_{2\pi}$
\be\no
&&\p_a \{b_2\hat{W}_6(\vec{\beta}(y))+R_3(\hat{{\bf W}}(r_s,\vec{\beta}(y)),\hat{W}_6(\vec{\beta}(y)))\}(y_1,1,\tau)=0,\\\label{r4cp}
&&\p_a \{R_{4}(\hat{{\bf W}}(r_s,\vec{\beta}(y)),\hat{W}_6(\vec{\beta}(y)))\}(y_1,1,\tau)=0,
\ee
and
\be\no
\p_{a} W_4(y_1,1,\tau)=\frac{b_2}{b_1}\p_{a} W_1(r_s,1,\tau),\ \ \forall (y_1,\tau)\in [r_s,r_2]\times \mathbb{T}_{2\pi}.
\ee

\begin{remark}
{\it The $C^{3,\alpha}(\overline{E})$ norm of $\hat{W}_6$ is required for the $C^{2,\alpha}(\overline{\mathbb{D}})$ estimate of the first term in $R_4$.
}\end{remark}

{\bf Step 3.} Thanks to \eqref{vor400}-\eqref{vor401}, we solve the following problem for the first component of the vorticity
\be\label{vor501}\begin{cases}
\bigg(D_1^{\hat{W}_6}+\frac{(1+|y'|^2)(\hat{W}_2D_2^{\hat{W}_6}+\hat{W}_3D_3^{\hat{W}_6})}{2D_0^{\hat{W}_6}(\bar{U}(D_0^{\hat{W}_6})+\hat{W}_1)}\bigg) \tilde{{\omega}}_1 + \mu(\hat{{\bf W}},\hat{W}_6)\tilde{\omega}_1=H_0(\hat{{\bf W}},\hat{W}_6),\\
\tilde{\omega}_1(r_s,y')=R_6(\hat{{\bf W}}(r_s,y'), \hat{W}_6(y')),\ \ \forall y'\in \overline{E},
\end{cases}\ee
where
\be\no
&&R_6(\hat{{\bf W}}(r_s,y'), \hat{W}_6(y'))=\frac{(1+|y'|^2)^2}{2b_0r_s^2}\b(\p_{y_3} \b\{\frac{g_2(\hat{{\bf W}}(r_s,y'), \hat{W}_6(y'))}{1+|y'|^2}\b\}\\\no
&&\quad\quad\quad-\p_{y_2}\b\{\frac{g_3(\hat{{\bf W}}(r_s,y'), \hat{W}_6(y'))}{1+|y'|^2}\b\}\b)+g_4(\hat{{\bf W}}(r_s,y'), \hat{W}_6(y')).
\ee

Integrating the first equation in \eqref{vor501} along the trajectory $(\tau,\bar{y}_2(\tau;y),\bar{y}_3(\tau;y))$ yields
\begin{eqnarray}\label{vor502}
&&\tilde{\omega}_1(y)= R_6(\hat{{\bf W}}(r_s,\vec{\beta}(y)), \hat{W}_6(\vec{\beta}(y))) e^{-\int_{r_s}^{y_1} \mu(\hat{{\bf W}},\hat{W}_6)(t;\bar{y}_2(t;y),\bar{y}_3(t;y))dt}\\\nonumber
&&\quad\quad\quad+ \int_{r_s}^{y_1} H_0(\hat{{\bf W}},\hat{W}_6)(\tau,\bar{y}_2(\tau;y),\bar{y}_3(\tau;y)) e^{-\int_{\tau}^{y_1} \mu(\hat{{\bf W}},\hat{W}_6)(t;\bar{y}_2(t;y),\bar{y}_3(t;y))dt} d\tau.
\end{eqnarray}
Thus the following estimate holds
\be\no
&&\|\tilde{\omega}_1\|_{C^{1,\alpha}(\overline{\mathbb{D}})}\leq C_*(\|\tilde{\omega}_1(r_s,\cdot)\|_{C^{1,\alpha}(\overline{E})}+\|H_0(\hat{{\bf W}},\hat{W}_6)\|_{C^{1,\alpha}(\overline{\mathbb{D}})})\\\no
&&\leq C_*(\epsilon\|(\hat{{\bf W}}, \hat{W}_6)\|_{\Xi}+\|(\hat{{\bf W}}, \hat{W}_6)\|_{\Xi}^2)\leq C_*(\epsilon\delta_0+\delta_0^2).
\ee
It is verified in Appendix \S\ref{appendix} that
\be\label{om1}
&&\tilde{\omega}_1(r_s,1,\tau)=R_6(\hat{{\bf W}}(r_s,1,\tau), \hat{W}_6(1,\tau))=0,\ \ \ \forall \tau\in \mathbb{T}_{2\pi},\\\label{om2}
&&H_0(\hat{{\bf W}},\hat{W}_6)(y_1,1,\tau)=0,\ \ \ \forall (y_1,\tau)\in [r_s,r_2]\times\mathbb{T}_{2\pi}
\ee
Then it follows from \eqref{char21}-\eqref{char22} and \eqref{vor502} that
\be\label{om3}
\tilde{\omega}_1(y_1,1,\tau)=0,\ \ \ \forall (y_1,\tau)\in [r_s,r_2]\times\mathbb{T}_{2\pi}.
\ee

Substituting \eqref{vor502} and \eqref{ent43} into \eqref{vor404}-\eqref{vor406}, one gets
\be\label{vor504}
&&\frac{1+|y'|^2}{2y_1}(\p_{y_2} W_3-\p_{y_3} W_2++\frac{1}{y_1}(y_3 W_2-y_2 W_3)=G_1(\hat{{\bf W}},\hat{W}_6),\\\label{vor505}
&&\frac{1+|y'|^2}{2 y_1}\p_{y_3} W_1- \p_{y_1} W_3-\frac{W_3}{y_1}+ d_3(y_1)\frac{1+|y'|^2}{2y_1}\p_{y_3} W_1(r_s,y')\\\no
&&\q\q\q=G_2(W_5,\hat{{\bf W}},\hat{W}_6),\\\label{vor506}
&&\p_{y_1} W_2+\frac{W_2}{y_1}-\frac{1+|y'|^2}{2 y_1} \p_{y_2} W_1 - d_3(y_1) \frac{1+|y'|^2}{2y_1}\p_{y_2}W_1(r_s,y')\\\no
&&\q\q\q=G_3(W_5,\hat{{\bf W}},\hat{W}_6),
\ee
where
\be\no
&&d_3(y_1)=\frac{b_2}{b_1}\frac{\bar{B}-\frac12 \bar{U}^2(y_1)}{\gamma \bar{K} \bar{U}(y_1)},\ \ \ G_1(\hat{{\bf W}},\hat{W}_6)=\tilde{\omega}_1(y)+H_1(\hat{{\bf W}},\hat{W}_6),\\\no
&&G_2(W_5,\hat{{\bf W}},\hat{W}_6)=\frac{\hat{W}_2\tilde{\omega}_1}{\bar{U}(D_0^{\hat{W}_6})+\hat{W}_1}+\frac{(1+|y'|^2) D_3^{\hat{W}_6}W_5}{2D_0^{\hat{W}_6}(\bar{U}(D_0^{\hat{W}_6})+\hat{W}_1)}\\\no
&&\quad\quad+ H_2(\hat{{\bf W}},\hat{W}_6)+ \frac{\bar{B}-\frac12 \bar{U}^2(y_1)}{\gamma \bar{K} \bar{U}(y_1)} \frac{1+|y'|^2}{2y_1}\p_{y_3} \{R_4(\hat{{\bf W}},\hat{W}_6)\},\\\no
&&G_3(W_5,\hat{{\bf W}},\hat{W}_6)=\frac{\hat{W}_3\tilde{\omega}_1}{\bar{U}(D_0^{\hat{W}_6})+\hat{W}_1}-\frac{(1+|y'|^2) D_2^{\hat{W}_6}W_5}{2D_0^{\hat{W}_6}(\bar{U}(D_0^{\hat{W}_6})+\hat{W}_1)}\\\no
&&\quad\quad+H_3(\hat{{\bf W}},\hat{W}_6)-\frac{\bar{B}-\frac12 \bar{U}^2(y_1)}{\gamma \bar{K} \bar{U}(y_1)} \frac{1+|y'|^2}{2y_1}\p_{y_2}  \{R_4(\hat{{\bf W}},\hat{W}_6)\}.
\ee

Furthermore, \eqref{den20} implies that
\be\label{den99}
&&d_1(y_1) \p_{y_1} W_1+\frac{1+|y'|^2}{2y_1}\sum_{j=2}^3\p_{y_j} W_j+\frac{2W_1}{y_1} -\frac{1}{y_1}\sum_{j=2}^3 y_j W_j\\\no
&&\quad\quad\quad\quad\quad+ d_2(y_1)W_1=d_0(D_0^{\hat{W}_6}) W_5+G_0(\hat{{\bf W}},\hat{W}_6), \ \ \text{in }\mathbb{D}.
\ee

Using the equation \eqref{ent43}, the boundary condition \eqref{pres3} at the exit reduces
\be\label{pres4}
W_1(r_2,y')+ d_3(r_2) W_1(r_s,y')=q_4(y'),
\ee
where
\be\no
&&q_4(y')=d_3(r_2) R_4(\hat{{\bf W}}(r_s,\vec{\beta}(r_2,y')),\hat{W}_6(\vec{\beta}(r_2,y')))+
\frac{W_5(r_2,y')}{\bar{U}(r_2)}\\\no
&&\quad-\frac{\epsilon P_{ex}(y')}{(\bar{\rho}\bar{U})(r_2)}-\frac{1}{2\bar{U}(r_2)}\sum_{j=1}^3 \hat{W}_j^2(r_2,y')+ E(\hat{{\bf W}}(r_2,y'),\hat{W}_6(y')).
\ee
It follows from \eqref{pressure-cp},\eqref{cp100}, \eqref{ber48},\eqref{r4cp} and the exact expression of $E(\hat{{\bf W}}(r_2,\cdot),\hat{W}_6(\cdot))$ in \eqref{err3} that
\be\label{pres5}
\p_a q_4(1,\tau)=0,\ \ \ \forall \tau\in \mathbb{T}_{2\pi}.
\ee

{\bf Step 4.} We have derived a deformation-curl system for the velocity field which consists of the equations \eqref{vor504}-\eqref{vor506}, \eqref{den99} supplemented with the boundary conditions \eqref{shock19},\eqref{shock20},\eqref{slip2} and \eqref{pres4}, where $q_1, R_1$ and $g_i (i=2,3)$ are evaluated at $(\hat{{\bf W}},\hat{W}_6)$. However, due to the linearization, the vector field $(G_1,G_2,G_3)(\hat{{\bf W}},\hat{W}_6)$ may not be divergence free and thus the solvability condition for the curl system does not hold in general. To overcome this obstacle, we enlarge the deformation-curl system by adding a new unknown function $\Upsilon$ with additional mixed boundary condition for $\Upsilon$:
\be\lab{den32}\begin{cases}
d_1(y_1) \p_{y_1} W_1+\frac{1+|y'|^2}{2y_1}\sum_{j=2}^3\p_{y_j} W_j+\frac{2W_1}{y_1} \\
\quad\quad\quad\quad-\frac{1}{y_1}\sum_{j=2}^3 y_j W_j+ d_2(y_1)W_1=d_0(D_0^{\hat{W}_6}) W_5+G_0(\hat{{\bf W}},\hat{W}_6),\\
\frac{1+|y'|^2}{2y_1}(\p_{y_2} W_3-\p_{y_3} W_2)+ \frac{1}{y_1}(y_3 W_2-y_2 W_3)+\p_{y_1}\Upsilon=G_1(\hat{{\bf W}},\hat{W}_6),\\
\frac{1+|y'|^2}{2y_1}\p_{y_3}(W_1+d_3(y_1) W_1(r_s,y'))-\p_{y_1} W_3-\frac{W_3}{y_1}+\frac{1+|y'|^2}{2y_1}\p_{y_2}\Upsilon= G_2(W_5,\hat{{\bf W}},\hat{W}_6),\\
\p_{y_1} W_2+\frac{W_2}{y_1}-\frac{1+|y'|^2}{2 y_1}\p_{y_2}(W_1+d_3(y_1) W_1(r_s,y'))+\frac{1+|y'|^2}{2y_1}\p_{y_3}\Upsilon = G_3(W_5,\hat{{\bf W}},\hat{W}_6),
\end{cases}\ee
with the boundary conditions
\be\label{den320}\begin{cases}
\sum_{j=2}^3\p_{y_j}^2W_1(r_s,y')-b_0 b_1\p_{y_j}\b(\frac{2 r_s W_j}{1+|y'|^2}\b)(r_s,y')+q_1(\hat{{\bf W}}(r_s,y'),\hat{W}_6(y')),\forall y'\in E,\\
W_1(r_2,y')+ d_3(r_2)W_1(r_s,y')=q_4(y'),\ \forall y'\in E,\\
\p_{y_1}\Upsilon(r_s,y')=\p_{y_1}\Upsilon(r_2,y')=0,\ \forall y'\in E,\\
\sum_{j=2}^3y_j W_j(y_1,y')=\Upsilon(y_1,y')=0,\ \forall (y_1,y')\in \Sigma_w,\\
\sum_{j=2}^3 \b(y_j\p_{y_j} W_1(r_s,y')-b_0b_1 r_s y_j W_j(r_s,y')\b)=b_1\sum_{j=2}^3 y_j g_j(\hat{{\bf W}}(r_s,y'), \hat{W}_6(y'))\\
\quad+ \sum_{j=2}^3 y_j\p_{y_j}\{R_1(\hat{{\bf W}}(r_s,y'), \hat{W}_6(y'))\},\ \text{on }\ y_1=r_s, |y'|=1.
\end{cases}\ee

Due to \eqref{j211} and \eqref{rv41}, the last boundary condition in \eqref{den320} reduces to
\be\no
\sum_{j=2}^3\bigg(y_j\p_{y_j} W_1(r_s,y')-b_0b_1 r_s y_j W_j(r_s,y')\bigg)=0,\ \ \ \text{on }\ y_1=r_s \ |y'|=1.
\ee

One can verify the following compatibility conditions (See the Appendix \S\ref{appendix} for the detailed proof):
\be\label{g123}\begin{cases}
G_1(\hat{{\bf W}},\hat{W}_6)(y_1,1,\tau)=0,\ \ \ \forall (y_1,\tau)\in [r_s,r_2]\times \mathbb{T}_{2\pi},\\
G_{\tau}(W_5,\hat{{\bf W}},\hat{W}_6)(y_1,1,\tau)=0,\ \ \forall (y_1,\tau)\in [r_s,r_2]\times \mathbb{T}_{2\pi},\\
\p_a \{G_a(W_5,\hat{{\bf W}},\hat{W}_6)\}(y_1,1,\tau)=0,\ \ \forall (y_1,\tau)\in [r_s,r_2]\times \mathbb{T}_{2\pi},
\end{cases}\ee
where $G_a=\cos\tau G_2 +\sin\tau G_3$ and $G_{\tau}=-\sin\tau G_2+ \cos\tau G_3$.

The unique solvability of the problem \eqref{den32} is divided into several steps.

{\bf Step 4.1} First, taking the divergence operator for the second, third and fourth equations in \eqref{den32} leads to
\be\label{den33}\begin{cases}
\p_{y_1}^2 \Upsilon + \frac{1+|y'|^2}{4y_1^2}\sum_{j=2}^3\p_{y_j}((1+|y'|^2)\p_{y_j}\Upsilon) + \frac{2}{y_1}\p_{y_1}\Upsilon-\frac{1+|y'|^2}{2 y_1^2}\sum_{j=2}^3y_j\p_{y_j} \Upsilon\\
\quad\quad\quad= \p_{y_1} G_1 + \frac{1+|y'|^2}{2y_1}\sum_{j=2}^3\p_{y_j} G_j +\frac{2 G_1}{y_1}- \frac{1}{y_1}\sum_{j=2}^3 y_j G_j,\ \text{in }\mathbb{D},\\
\p_{y_1}\Upsilon(r_s,y')=\p_{y_1}\Upsilon(r_2,y')=0,\ \forall y'\in E,\\
\displaystyle\Upsilon(y_1,y')=0,\ \ \forall (y_1,y')\in \Sigma_w.
\end{cases}\ee

\begin{lemma}\label{lapace1}
{\it Given any vector field $(G_1,G_2,G_3)\in (C^{1,\alpha}(\overline{\mathbb{D}}))^3$ satisfying \eqref{g123}, there exists a unique solution $\Upsilon\in (C^{2,\alpha}(\overline{\mathbb{D}}))^3$ to the problem \eqref{den32} with
\be\no
\|\Upsilon\|_{C^{2,\alpha}(\overline{\mathbb{D}})}\leq C_*\sum_{i=1}^3\|\dot{G}_i\|_{C^{1,\alpha}(\overline{\mathbb{D}})}\leq C_*(\epsilon\|(\hat{{\bf W}}, \hat{W}_6)\|_{\Xi}+\|(\hat{{\bf W}}, \hat{W}_6)\|_{\Xi}^2).
\ee
}\end{lemma}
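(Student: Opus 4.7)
The plan is to recognize that the second-order operator on the left-hand side of \eqref{den33} is, modulo the coordinate change \eqref{st1}, the standard Euclidean Laplacian. Expanding $\p_{y_j}((1+|y'|^2)\p_{y_j}\Upsilon) = 2y_j\p_{y_j}\Upsilon + (1+|y'|^2)\p_{y_j}^2\Upsilon$, the $\sum y_j\p_{y_j}\Upsilon$ contributions cancel, and the LHS reduces to $\p_{y_1}^2\Upsilon + \frac{(1+|y'|^2)^2}{4y_1^2}\sum_{j=2}^3\p_{y_j}^2\Upsilon + \frac{2}{y_1}\p_{y_1}\Upsilon$, which by \eqref{z1} equals $\Delta_x\psi(x)$ for $\psi(x) := \Upsilon(\mathscr{S}(x))$ on the hemispherical shell $\Omega^+ = \{x : r_s < |x| < r_2,\ x_1 > 0\}$. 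The boundary data translate to Neumann conditions $\p_r\psi = 0$ on $|x| \in \{r_s,r_2\}$ and the Dirichlet condition $\psi = 0$ on the equatorial plane $\{x_1 = 0\}\cap \overline{\Omega^+}$.

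The main obstacle will be showing that the right-hand side $F$ of \eqref{den33} vanishes on $|y'| = 1$, which is what permits a regularity-preserving reflection across $\{x_1 = 0\}$. Writing in polar coordinates $y_2 = a\cos\tau$, $y_3 = a\sin\tau$, with $\sum_{j=2}^3\p_{y_j}G_j = \p_a G_a + \frac{G_a}{a} + \frac{1}{a}\p_\tau G_\tau$ and $\sum y_j G_j = aG_a$, evaluation at $a = 1$ gives
\[
F\big|_{a=1} = \Bigl(\p_{y_1}G_1 + \tfrac{2G_1}{y_1}\Bigr) + \tfrac{1}{y_1}\bigl(\p_a G_a + G_a + \p_\tau G_\tau\bigr) - \tfrac{G_a}{y_1}.
\]
The first bracket vanishes by differentiating the identity $G_1(y_1,1,\tau)\equiv 0$ in $y_1$; the compatibility conditions \eqref{g123} force $\p_a G_a = G_\tau = \p_\tau G_\tau = 0$ at $a=1$; and the two remaining $G_a/y_1$ contributions cancel exactly. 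This cancellation is delicate and depends on the precise coefficients produced by the spherical projection, so each term in \eqref{g123} is used critically.

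Once $F|_{x_1 = 0} = 0$ is established, I would extend $\psi$ and $F$ by odd reflection $x_1 \mapsto -x_1$ to the full shell $\Omega^\ast = \{r_s < |x| < r_2\}$. The extended $\tilde F$ lies in $C^{0,\alpha}(\overline{\Omega^\ast})$, since $F|_{x_1=0}=0$ implies $|F(x)| \le C|x_1|^\alpha$ near the equator and subadditivity of $t\mapsto t^\alpha$ handles the mixed-sign Hölder quotient. The extended equation $\Delta\tilde\psi = \tilde F$ on $\Omega^\ast$ with Neumann data on both spheres has $\int_{\Omega^\ast}\tilde F\,dx = 0$ automatically by oddness, so classical Schauder theory on the smooth shell yields $\tilde\psi\in C^{2,\alpha}(\overline{\Omega^\ast})$, unique modulo an additive constant. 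The even-in-$x_1$ part is harmonic with zero Neumann data, hence constant, so selecting the odd representative forces $\tilde\psi|_{x_1=0} = 0$ as required. Restricting to $\Omega^+$ and transforming back by $\mathscr{S}$ produces $\Upsilon \in C^{2,\alpha}(\overline{\mathbb{D}})$ with $\|\Upsilon\|_{C^{2,\alpha}} \le C\|\tilde F\|_{C^{0,\alpha}} \le C\sum_i \|G_i\|_{C^{1,\alpha}}$; the final quadratic-in-norm bound then follows from tracing the definitions of $G_0, G_1, G_2, G_3$ in terms of $(\hat{\bf W},\hat W_6)$. Uniqueness is immediate by an energy estimate applied to the homogeneous problem, using the Dirichlet condition on the wall to absorb boundary terms.
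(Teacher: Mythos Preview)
Your approach is correct and follows the same strategy as the paper: pull back to the hemispherical shell via the spherical projection so that the operator becomes the Euclidean Laplacian, verify that the compatibility conditions \eqref{g123} force the right-hand side to vanish on the equatorial plane, reflect oddly across $\{x_1=0\}$ to the full shell, and apply Schauder theory there. The only cosmetic differences are that the paper keeps the source in divergence form $\sum_i\partial_{x_i}\mathcal{G}_i$ and reflects the vector components separately (even for $\mathcal{G}_1$, odd for $\mathcal{G}_2,\mathcal{G}_3$), and that the paper first invokes Lieberman's mixed boundary value theory on the half-shell and then reflects to upgrade corner regularity, whereas you reflect the scalar data first and solve the pure Neumann problem on the full shell directly; both routes give the same $C^{2,\alpha}$ conclusion.
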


\begin{proof}

Define the spherical projection coordinates
\be\no
&&\mathscr{S}:x\in \Omega_+=\{(x_1,x_2,x_3): r_s<\sqrt{x_1^2+x_2^2+x_3^2}<r_2, x_1>0\}\to y\in \mathbb{D}\\\label{ss}
&&\quad\quad y_1=\sqrt{x_1^2+x_2^2+x_3^2}, \ \ y_i=\frac{x_i}{x_1+\sqrt{x_1^2+x_2^2+x_3^2}},\ \ i=2,3,
\ee
then $\mathscr{S}$ is a one to one and onto smooth mapping from $\Omega_+$ to $\mathbb{D}$ with its inverse $\mathscr{S}^{-1}$.

Define
\be\no
\bm{\mathcal{G}}(x)=\sum_{i=1}^3 \mathcal{G}_i(x) {\bf e}_i(x)=\left(\begin{array}{lll}
\mathcal{G}_1(x)\\
\mathcal{G}_2(x)\\
\mathcal{G}_3(x)
\end{array}\right)=\sum_{i=1}^3 G_i(W_5,\hat{{\bf W}},\hat{W}_6)\tilde{{\bf e}}_i(y),
\ee
where $\{\tilde{{\bf e}}_i(y)\}_{i=1,2,3}$ is the basis of $\mathbb{R}^3$ defined in \eqref{basis} with $z$ replaced by $y$. Then $\mathcal{G}_i\in C^{1,\alpha}(\overline{\Omega_+})$. By \cite[Theorem 1.1]{lie86}, there exists a unique smooth solution $\tilde{\Upsilon}(x)\in C^{2,\alpha}(\Omega_+)\cap C^0(\overline{\Omega_+})$ to the following boundary value problem:
\be\label{d331}\begin{cases}
\displaystyle\sum_{i=1}^3 \p_{x_i}^2 \tilde{\Upsilon}(x)= \sum_{i=1}^3 \p_{x_i}\mathcal{G}_i(x),\ \ \forall x\in \Omega_+,\\
\tilde{\Upsilon}(0,x')=0,\ \ \ \forall r_s\leq |x'|=\sqrt{x_2^2+x_3^2}\leq r_2,\\
\sum_{i=1}^3 x_j \p_{x_j}\tilde{\Upsilon}(x)=0,\ \ \forall x\in \{x\in \mathbb{R}^3: x_1>0, |x|=r_s\},\\
\sum_{i=1}^3 x_j \p_{x_j}\tilde{\Upsilon}(x)=0,\ \ \forall x\in \{x\in \mathbb{R}^3: x_1>0, |x|=r_2\}.
\end{cases}\ee
Away from the circles $\{(0,x'): |x'|=r_s \text{ or } |x'|=r_2\}$, $\tilde{\Upsilon}$ is $C^{2,\alpha}$ smooth up to the boundary. Define the function $\Upsilon(y)$ on $\mathbb{D}$ through the transformation \eqref{ss}:
\be\no
\Upsilon(y)=\tilde{\Upsilon}(\mathscr{S}^{-1}(y)), \ \ \forall y\in \mathbb{D}
\ee
then $\Upsilon\in C^{2,\alpha}(\mathbb{D})$ and away from the circles $\{(r_s,y'):|y'|=1\}\cup \{(r_2,y'):|y'|=1\}$ it is also $C^{2,\alpha}$ smooth up to the boundary. Thanks to \eqref{z1}, $\Upsilon$ solves the boundary value problem \eqref{den33}. It remains to improve the $C^{2,\alpha}$ regularity of $\Upsilon$ up to the whole boundary.

Note that
\be\no
&&\mathcal{G}_2(x_1,x')=\frac{x_2}{r}G_1+(1-\frac{x_2^2}{r(x_1+r)})G_2-\frac{x_2x_3}{r(x_1+r)} G_3,\\\no
&&\mathcal{G}_2(x_1,x')=\frac{x_3}{r}G_1-\frac{x_2x_3}{r(x_1+r)}G_2 +(1-\frac{x_3^2}{r(x_1+r)})G_3,\\\no
&&\p_{x_1}\mathcal{G}_1(x_1,x')=\sum_{i=1}^3\b(\frac{x_1 x_i}{r^2}\p_{z_1} G_i-\frac{x_i}{r^2(x_1+r)}(x_2\p_{z_2}+x_3\p_{y_3})G_i\b)\\\no
&&\q\q\q+\b(\frac1{r}-\frac{x_1^3}{r^3}\b)G_1+\frac{x_1}{r^2}(x_2G_2+x_3 G_3),\\\no
&& a\p_a G_a(y)=(y_2\p_{y_2}+y_3\p_{y_3})(\frac{y_2G_2+y_3 G_3}{a})=\sum_{i=2}^3y_i(y_2\p_{y_2}G_i+y_3 \p_{y_3} G_i).
\ee

Thus, in the coordinates $x=(x_1,x')$, the compatibility condition \eqref{g123} becomes
\be\no\begin{cases}
\p_{x_1}\mathcal{G}_1(0,x')=0,\ \ \ \forall x'\in \{x'\in\mathbb{R}^2: r_s<|x'|<r_2\},\\
\mathcal{G}_2(0,x')=\mathcal{G}_3(0,x')=0,\ \ \ \forall x'\in \{x'\in\mathbb{R}^2: r_s<|x'|<r_2\}.
\end{cases}\ee
Together with the first equation in \eqref{d331}, this implies that
\be\no
\tilde{\Upsilon}(0,x')=\p_{x_1}^2\tilde{\Upsilon}(0,x')=0,\ \ \ \forall x'\in \{x'\in\mathbb{R}^2: r_s<|x'|<r_2\}.
\ee

Extending $\tilde{\Upsilon}(x)$ and $\mathcal{G}_i(x), i=1,2,3$ from $\Omega_+$ to $\Omega_e=\{x\in \mathbb{R}^3: r_s<|x|<r_2\}$ as follows
\be\no
(\tilde{\Upsilon}_e,\mathcal{G}_{2e},\mathcal{G}_{3e})(x)=\begin{cases}
(\tilde{\Upsilon},\mathcal{G}_{2},\mathcal{G}_{3})(x_1,x'),\ \ &\forall x=(x_1,x')\in \overline{\Omega_+},\\
-(\tilde{\Upsilon},\mathcal{G}_{2},\mathcal{G}_{3})(-x_1,x'),\ \ &\forall x\in \Omega_e\setminus \overline{\Omega_+},
\end{cases}
\ee
and
\be\no
\mathcal{G}_{1e}(x)=\begin{cases}
\mathcal{G}_{1}(x_1,x'),\ \  &\forall x=(x_1,x')\in \overline{\Omega_+},\\
\mathcal{G}_{1}(-x_1,x'), \ \ &\forall x\in \Omega_e\setminus \overline{\Omega_+},
\end{cases}\ee
then $\tilde{\Upsilon}_e\in C^{2,\alpha}(\Omega_e),\mathcal{G}_{ie}\in C^{1,\alpha}(\overline{\Omega_e})$ and satisfies the following equation in $\Omega_e$:
\be\no\begin{cases}
\sum_{i=1}^3 \p_{x_i}^2 \tilde{\Upsilon}_e(x)= \sum_{i=1}^3 \p_{x_i}\mathcal{G}_{ie}(x),\ \ \forall x\in \Omega_e,\\
\sum_{i=1}^3 x_j \p_{x_j}\tilde{\Upsilon}_e(x)=0,\ \ \forall x\in \{x\in \mathbb{R}^3: |x|=r_s\},\\
\sum_{i=1}^3 x_j \p_{x_j}\tilde{\Upsilon}_e(x)=0,\ \ \forall x\in \{x\in \mathbb{R}^3: |x|=r_2\}.
\end{cases}\ee
Then by the standard theory in \cite[Chapter 6]{gt98}, $\tilde{\Upsilon}_e\in C^{2,\alpha}(\overline{\Omega_e})$ and $\Upsilon\in C^{2,\alpha}(\overline{\mathbb{D}})$ with the estimate
\be\no
&&\|\Upsilon\|_{C^{2,\alpha}(\overline{\mathbb{D}})}\leq C_*\|\tilde{\Upsilon}\|_{C^{2,\alpha}(\overline{\Omega_+})}\leq C_*\|\tilde{\Upsilon}_e\|_{C^{2,\alpha}(\overline{\Omega_e})}\\\no
&&\leq C_*\sum_{i=1}^3\|\mathcal{G}_{ie}\|_{C^{1,\alpha}(\overline{\Omega_e})}\leq C_*\sum_{i=1}^3\|\mathcal{G}_{i}\|_{C^{1,\alpha}(\overline{\Omega_+})}\leq C_*\sum_{i=1}^3\|G_{i}\|_{C^{1,\alpha}(\overline{\mathbb{D}})}\\\no
&&\leq C_*(\epsilon\|(\hat{{\bf W}}, \hat{W}_6)\|_{\Xi}+\|(\hat{{\bf W}}, \hat{W}_6)\|_{\Xi}^2)\leq C_*(\epsilon \delta_0 +\delta_0^2).
\ee
Furthermore, rewrite the first equation in \eqref{den33} in the polar coordinates $(y_1,a,\tau)$, one gets
\be\no
&&\p_{y_1}^2\Upsilon+ \frac{(1+a^2)^2}{4y_1^2}(\p_a^2 \Upsilon+ \frac1a\p_a \Upsilon+ \frac1{a^2}\p_{\tau}^2 \Upsilon)+ \frac2{y_1}\p_{y_1}\Upsilon\\\no
&&\quad\quad=\p_{y_1}G_1+\frac{1+a^2}{2y_1}(\p_a G_a+ \frac1a G_a+ \frac1a \p_{\tau} G_{\tau})+\frac{2G_1-a G_a}{y_1}.
\ee
Then employing the Dirichlet boundary and the compatibility condition \eqref{g123}, we find that
\be\label{d334}
\p_a^2 \Upsilon(y_1,1,\tau)+ \p_a\Upsilon(y_1,1,\tau)=0,\ \  \ \ \forall (y_1,\tau)\in [r_s,r_2]\times \mathbb{T}_{2\pi}.
\ee

\end{proof}

{\bf Step 4.2} Next we solve the divergence-curl system with normal boundary conditions
\be\label{den34}\begin{cases}
\p_{y_1} \dot{W}_1+\frac{1+|y'|^2}{2y_1}\sum_{j=2}^3\p_{y_j} \dot{W}_j +\frac{2\dot{W}_1}{y_1}-\frac{1}{y_1}\sum_{j=2}^3 y_j \dot{W}_j=0,\ \text{in }\mathbb{D},\\
\frac{1+|y'|^2}{2y_1}(\p_{y_2} \dot{W}_3-\p_{y_3} \dot{W}_2)+ \frac{1}{y_1}(y_3 \dot{W}_2-y_2 \dot{W}_3)=G_1-\p_{y_1}\Upsilon:=\dot{G}_1,\ \text{in }\mathbb{D},\\
\frac{1+|y'|^2}{2y_1}\p_{y_3} \dot{W}_1-\p_{y_1} \dot{W}_3-\frac{\dot{W}_3}{y_1}= G_2-\frac{1+|y'|^2}{2y_1}\p_{y_2}\Upsilon:=\dot{G}_2,\ \text{in }\mathbb{D},\\
\p_{y_1} \dot{W}_2+\frac{\dot{W}_2}{y_1}-\frac{1+|y'|^2}{2 y_1}\p_{y_2} \dot{W}_1= G_3-\frac{1+|y'|^2}{2y_1}\p_{y_3}\Upsilon:=\dot{G}_3,\ \text{in }\mathbb{D},\\
\dot{W}_1(r_s,y')= \dot{W}_1(r_2,y')=0,\ \forall y'\in E,\\
\sum_{j=2}^3y_j \dot{W}_j(y_1,y')=0,\ \forall (y_1,y')\in \Sigma_w.
\end{cases}\ee

It follows from \eqref{g123} and \eqref{d334} that on $\Sigma_w$ there holds
\be\label{g341}\begin{cases}
\dot{G}_1(\hat{{\bf W}},\hat{W}_6)(y_1,1,\tau)=0,\ \ \ \\
\dot{G}_{\tau}(W_5,\hat{{\bf W}},\hat{W}_6)(y_1,1,\tau)=(G_{\tau}-\frac{1}{y_1}\p_{\tau}\Upsilon)(y_1,1,\tau)=0,\ \ \\
\p_a \{\dot{G}_a(W_5,\hat{{\bf W}},\hat{W}_6)\}(y_1,1,\tau)=(\p_a G_a-\frac{1}{y_1}(\p_a^2 \Upsilon+\p_a \Upsilon))(y_1,1,\tau)=0,
\end{cases}\ee
where $\dot{G}_a=\cos\tau \dot{G}_2 +\sin\tau \dot{G}_3=G_a-\frac{1+a^2}{2y_1}\p_a \Upsilon$ and $\dot{G}_{\tau}=-\sin\tau \dot{G}_2 +\cos\tau \dot{G}_3$.

Since $\Upsilon$ satisfies the equation in \eqref{den33}, there holds
\be\label{g342}
\displaystyle \p_{y_1} \dot{G}_1+ \frac{1+|y'|^2}{2y_1}\sum_{j=2}^3\p_{y_j}\dot{G}_j+\frac{2}{y_1} \dot{G}_1-\frac{1}{y_1}\sum_{j=2}^3y_j\dot{G}_j\equiv 0,\ \text{in }\mathbb{D}.
\ee

\begin{lemma}\label{dc-1}
{\it Given any vector field $(\dot{G}_1,\dot{G}_2,\dot{G}_3)\in (C^{1,\alpha}(\overline{\mathbb{D}}))^3$ satisfying \eqref{g341} and \eqref{g342}, there exists a unique solution $(\dot{W}_1,\dot{W}_2,\dot{W}_3)\in (C^{2,\alpha}(\overline{\mathbb{D}}))^3$ to the problem \eqref{den34} with
\be\no
\sum_{i=1}^3\|\dot{W}_i\|_{C^{2,\alpha}(\overline{\mathbb{D}})}\leq C_*\sum_{i=1}^3\|\dot{G}_i\|_{C^{1,\alpha}(\overline{\mathbb{D}})}
\ee
and the compatibility conditions
\be\label{d350}\begin{cases}
(\dot{W}_a,\p_a \dot{W}_1,\p_a \dot{W}_{\tau})(y_1,1,\tau)=0,\ \ \ &\forall (y_1,\tau)\in [r_s,r_2]\times \mathbb{T}_{2\pi},\\
(\p_a^2\dot{W}_a+\p_a \dot{W}_a)(y_1,1,\tau)=0,\ \ \ &\forall (y_1,\tau)\in [r_s,r_2]\times \mathbb{T}_{2\pi},
\end{cases}\ee
where $\dot{W}_a=\cos\tau \dot{W}_2+ \sin \tau \dot{W}_3$ and $\dot{W}_{\tau}=-\sin\tau \dot{W}_2+ \cos\tau \dot{W}_3$.
}\end{lemma}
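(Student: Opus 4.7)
The plan is to transform \eqref{den34} to a standard divergence-curl system on the hemispherical shell via the inverse spherical projection $\mathscr{S}^{-1}$, extend the data across the plane $\{x_1=0\}$ by reflection to the full spherical shell $\Omega_e$, and solve the resulting problem through a Helmholtz decomposition combined with classical Schauder theory, paralleling the argument used in the proof of Lemma \ref{lapace1}.

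First I would set $\tilde{\mathbf{U}}(x) := \sum_{i=1}^3 \dot{W}_i(\mathscr{S}(x))\tilde{\mathbf{e}}_i(\mathscr{S}(x))$ and $\tilde{\mathbf{G}}(x)$ analogously on $\Omega_+ = \{r_s<|x|<r_2,\, x_1>0\}$. By \eqref{z3}-\eqref{z4}, the system \eqref{den34} is equivalent to $\operatorname{div}_x\tilde{\mathbf{U}} = 0$ and $\operatorname{curl}_x \tilde{\mathbf{U}} = \tilde{\mathbf{G}}$ in $\Omega_+$, with boundary conditions $\tilde{\mathbf{U}}\cdot\mathbf{e}_r = 0$ on the two spherical caps and $\tilde{u}_1 = 0$ on $\{x_1=0\}\cap\overline{\Omega_+}$, while \eqref{g342} reads $\operatorname{div}_x\tilde{\mathbf{G}} = 0$. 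I would then extend $\tilde{\mathbf{G}}$ to $\Omega_e = \{r_s<|x|<r_2\}$ by taking $\tilde{G}_1^e$ even and $\tilde{G}_2^e, \tilde{G}_3^e$ odd in $x_1$. Using the basis \eqref{basis}, a direct computation shows that the compatibility conditions \eqref{g341} on $\{|y'|=1\}$ translate precisely to $\partial_{x_1}\tilde{G}_1|_{x_1=0}=0$ and $\tilde{G}_2|_{x_1=0}=\tilde{G}_3|_{x_1=0}=0$, so that $\tilde{\mathbf{G}}^e\in C^{1,\alpha}(\overline{\Omega_e})$ with $\operatorname{div}_x\tilde{\mathbf{G}}^e=0$.

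Next, since $\Omega_e$ is simply connected and $\tilde{\mathbf{G}}^e$ is divergence-free, standard vector-potential theory yields $\mathbf{A}\in C^{2,\alpha}(\overline{\Omega_e})$ with $\operatorname{curl}\mathbf{A} = \tilde{\mathbf{G}}^e$ and $\operatorname{div}\mathbf{A} = 0$. Writing $\tilde{\mathbf{U}}^e = \mathbf{A} + \nabla\phi$, the divergence-free condition forces $\Delta\phi=0$ in $\Omega_e$, and the normal condition $\tilde{\mathbf{U}}^e\cdot\mathbf{e}_r=0$ on the two spheres produces the Neumann data $\partial_r\phi = -\mathbf{A}\cdot\mathbf{e}_r$, whose solvability $\int_{\partial\Omega_e}\partial_r\phi\,dS = -\int_{\Omega_e}\operatorname{div}\mathbf{A}\,dx = 0$ is automatic. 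Thus $\phi\in C^{2,\alpha}(\overline{\Omega_e})$ exists uniquely modulo constants, giving $\tilde{\mathbf{U}}^e\in C^{2,\alpha}(\overline{\Omega_e})$ with the claimed estimate. Uniqueness for the original problem follows because any two solutions differ by a curl-free, divergence-free field with zero normal component on both spheres and zero $x_1$-component on $\{x_1=0\}$; its symmetric extension is a harmonic gradient on $\Omega_e$ with vanishing Neumann data, hence constant, so the difference vanishes.

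Finally, by the parities of $\tilde{\mathbf{G}}^e$ and the uniqueness of the Helmholtz decomposition, $\tilde{\mathbf{U}}^e$ inherits $\tilde{u}_1^e$ odd and $\tilde{u}_2^e,\tilde{u}_3^e$ even in $x_1$; in particular $\tilde{u}_1^e|_{x_1=0}=0$, which on pulling back through $\mathscr{S}$ is exactly the slip condition $y_2\dot{W}_2+y_3\dot{W}_3=0$ on $\{|y'|=1\}$. Transforming back then gives $\dot{W}_i\in C^{2,\alpha}(\overline{\mathbb{D}})$. The remaining compatibilities in \eqref{d350} are obtained by translating the Cartesian parity identities $\tilde{u}_1^e|_{x_1=0}=0$ and $\partial_{x_1}\tilde{u}_j^e|_{x_1=0}=0$ for $j=2,3$ through \eqref{basis}: the first yields $\dot{W}_a|_{a=1}=0$, the latter two produce $\partial_a\dot{W}_1|_{a=1}=\partial_a\dot{W}_\tau|_{a=1}=0$, and after expanding $\dot{W}_a=\cos\tau\,\dot{W}_2+\sin\tau\,\dot{W}_3$ to second order in $a$ near $a=1$, the second-order identity $(\partial_a^2\dot{W}_a+\partial_a\dot{W}_a)|_{a=1}=0$. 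The main obstacle will be this last polar compatibility, whose verification relies on the curvature identity $\frac{d}{da}(a+\tfrac{1}{a})|_{a=1}=0$ (cf.\ Remark \ref{r-super}) that also underlies the supersonic-region analysis in \S\ref{appendix}.
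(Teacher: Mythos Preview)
Your overall strategy is correct and close to the paper's: transform to the Cartesian div--curl system on the hemispherical shell $\Omega_+$, exploit the even/odd reflection across $\{x_1=0\}$, and apply Schauder theory on the full shell $\Omega_e$. The translation of \eqref{g341} into $\p_{x_1}\tilde G_1|_{x_1=0}=0$, $\tilde G_2|_{x_1=0}=\tilde G_3|_{x_1=0}=0$ and of \eqref{g342} into $\operatorname{div}_x\tilde{\mathbf G}=0$ is exactly what the paper does. The two arguments differ mainly in order: the paper first solves the div--curl system on $\Omega_+$ (quoting the standard result in \cite{ky09}), reads off the wall identities \eqref{d342} from the equations, and \emph{then} extends the solution; you instead extend the data first, solve on $\Omega_e$ via an explicit Helmholtz decomposition $\tilde{\mathbf U}^e=\mathbf A+\nabla\phi$, and recover the flat-wall condition $\tilde u_1|_{x_1=0}=0$ from parity and uniqueness. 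Since the shell is simply connected, the space of harmonic Neumann fields is trivial, so your uniqueness/parity step is legitimate. One technical point you gloss over: to obtain $\mathbf A\in C^{2,\alpha}(\overline{\Omega_e})$ you must fix a boundary condition on $\mathbf A$ (e.g.\ tangential, $\mathbf A\times\mathbf n=0$) and invoke the corresponding elliptic regularity; ``standard vector-potential theory'' alone does not give global $C^{2,\alpha}$.

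The one place your proposal is genuinely weaker is the second-order compatibility $(\p_a^2\dot W_a+\p_a\dot W_a)|_{a=1}=0$. You propose to extract it from the Cartesian parity identities by ``expanding $\dot W_a$ to second order in $a$''; this is at best indirect, and the appeal to the curvature identity $\tfrac{d}{da}(a+\tfrac1a)|_{a=1}=0$ from Remark~\ref{r-super} is misplaced---that identity is used in the supersonic compatibility proof (Lemma~\ref{supersonic}), not here. The paper's route is cleaner and does not use parity at all for this step: once $\dot W_a|_{a=1}=0$, $\p_a\dot W_1|_{a=1}=0$, $\p_a\dot W_\tau|_{a=1}=0$ are known, one writes the \emph{first} (divergence) equation of \eqref{den34} in polar form,
\[
\p_{y_1}\dot W_1+\frac{1+a^2}{2y_1}\Big(\p_a\dot W_a+\frac1a\dot W_a+\frac1a\p_\tau\dot W_\tau\Big)+\frac{2\dot W_1}{y_1}-\frac{a\dot W_a}{y_1}=0,
\]
differentiates in $a$, and evaluates at $a=1$; all terms except $\tfrac1{y_1}(\p_a^2\dot W_a+\p_a\dot W_a)$ drop out. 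So the second-order identity is a consequence of the divergence equation together with the first-order compatibilities, not of the reflection symmetry per se.
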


\begin{proof}
Define
\be\no
\bm{\dot{\mathcal{G}}}(x)=\sum_{i=1}^3 \dot{\mathcal{G}}_i(x) {\bf e}_i(x)=\sum_{i=1}^3 \dot{G}_i(W_5,\hat{{\bf W}},\hat{W}_6)\tilde{{\bf e}}_i(y),
\ee
then in the coordinates $x=(x_1,x')$, the compatibility condition \eqref{g341} becomes
\be\label{g343}\begin{cases}
\p_{x_1}\dot{\mathcal{G}}_1(0,x')=0,\ \ \ \forall x'\in \{x'\in\mathbb{R}^2: r_s<|x'|<r_2\},\\
\dot{\mathcal{G}}_2(0,x')=\dot{\mathcal{G}}_3(0,x')=0,\ \ \ \forall x'\in \{x'\in\mathbb{R}^2: r_s<|x'|<r_2\}.
\end{cases}\ee
Using \eqref{z3}, the formula \eqref{g342} implies that
\be\label{g344}
\sum_{i=1}^3 \p_{x_i} \dot{\mathcal{G}}_i(x)=0,\ \ \ \forall x\in \Omega_+.
\ee

Consider the following divergence-curl system in $\Omega_+$:
\be\label{d341}\begin{cases}
\sum_{i=1}^3 \p_{x_i}\dot{\mathcal{W}}_i(x)=0,\ \ \text{in } \Omega_+,\\
\p_{x_2} \dot{\mathcal{W}}_3(x)- \p_{x_3} \dot{\mathcal{W}}_2(x)= \dot{\mathcal{G}}_1(x),\ \ \text{in } \Omega_+,\\
\p_{x_3} \dot{\mathcal{W}}_1(x)- \p_{x_1} \dot{\mathcal{W}}_3(x)= \dot{\mathcal{G}}_2(x),\ \ \text{in } \Omega_+,\\
\p_{x_1} \dot{\mathcal{W}}_2(x)- \p_{x_2} \dot{\mathcal{W}}_1(x)= \dot{\mathcal{G}}_3(x),\ \ \text{in } \Omega_+,\\
\dot{\mathcal{W}}_1(0,x')=0,\ \ \forall r_s\leq |x'|=\sqrt{x_2^2+x_3^2}\leq r_2,\\
\sum_{i=1}^3 x_j \dot{\mathcal{W}}_i(x)=0,\ \ \forall x\in \{x\in \mathbb{R}^3: x_1>0, |x|=r_s \ \text{or } |x|=r_2\}.
\end{cases}\ee
Thanks to \eqref{g344}, the standard result (cf. \cite{ky09} and the references therein) implies that there exists a unique solution $\bm{\dot{\mathcal{W}}}(x)=\sum_{i=1}^3 \dot{\mathcal{W}}_i(x) {\bf e}_i(x)\in C^{2,\alpha}(\Omega_+)$ to \eqref{d341}, and away from the circles $\{(0,x'): |x'|=r_s\}\cup \{(0,x'): |x'|=r_2\}$, the regularity can be improved to be $C^{2,\alpha}$ up to the boundary. It follows from \eqref{g343} and \eqref{d341} that
\be\label{d342}
(\p_{x_1} \dot{\mathcal{W}}_2,\p_{x_1} \dot{\mathcal{W}}_3,\dot{\mathcal{W}}_1,\p_{x_1}^2\dot{\mathcal{W}}_1)(0,x')=0,\ \forall x'=\{x'\in\mathbb{R}^2: r_s<|x'|<r_2\}.
\ee
Define the vector field $(\dot{W}_1,\dot{W}_2,\dot{W}_3)(y)\in C^{2,\alpha}(\mathbb{D})\cap C^0(\overline{\mathbb{D}})$ through the transformation \eqref{ss}
\be\no
\bm{\dot{\mathcal{W}}}(x)=\sum_{i=1}^3 \dot{\mathcal{W}}_i(x) {\bf e}_i(x)=\sum_{i=1}^3 \dot{W}_i(y) \tilde{{\bf e}}_i(y),
\ee
then by \eqref{z3} and \eqref{z4}, $\dot{W}_i(y), i=1,2,3$ solves the problem \eqref{den34}.

To improve the regularity near the corner circles, we extend $\dot{\mathcal{W}}_i(x)$ and $\mathcal{G}_i(x), i=1,2,3$ from $\Omega_+$ to $\Omega_e=\{x\in \mathbb{R}^3: r_s<|x|<r_2\}$ as follows
\be\no
(\dot{\mathcal{W}}_{1e},\dot{\mathcal{G}}_{2e},\dot{\mathcal{G}}_{3e})(x)=\begin{cases}
(\dot{\mathcal{W}}_1,\dot{\mathcal{G}}_{2},\dot{\mathcal{G}}_{3})(x_1,x'), \forall x=(x_1,x')\in \Omega_+,\\
-(\dot{\mathcal{W}}_1,\dot{\mathcal{G}}_{2},\dot{\mathcal{G}}_{3})(-x_1,x'), \forall x\in\{(x_1,x'): x_1<0, r_s<|x|<r_2\}
\end{cases}
\ee
and
\be\no
(\dot{\mathcal{W}}_{2e},\dot{\mathcal{W}}_{3e},\dot{\mathcal{G}}_{1e})(x)=\begin{cases}
(\dot{\mathcal{W}}_2,\dot{\mathcal{W}}_{3},\dot{\mathcal{G}}_{1})(x_1,x'), \forall x=(x_1,x')\in \Omega_+,\\
(\dot{\mathcal{W}}_2,\dot{\mathcal{W}}_{3},\dot{\mathcal{G}}_{1})(-x_1,x'), \forall x\in\{(x_1,x'): x_1<0, r_s<|x|<r_2\}
\end{cases}\ee

Then \eqref{d342} yields that $\dot{\mathcal{W}}_{ie}\in C^{2,\alpha}(\Omega_e)\cap C^0(\overline{\Omega_e}), i=1,2,3$ and they satisfy
\be\no\begin{cases}
\sum_{i=1}^3 \p_{x_i}\dot{\mathcal{W}}_{ie}(x)=0,\ \ &\forall x\in \Omega_e,\\
\p_{x_2} \dot{\mathcal{W}}_{3e}(x)- \p_{x_3} \dot{\mathcal{W}}_{2e}(x)= \dot{\mathcal{G}}_{1e}(x),\ \ &\forall x\in \Omega_e,\\
\p_{x_3} \dot{\mathcal{W}}_{1e}(x)- \p_{x_1} \dot{\mathcal{W}}_{3e}(x)= \dot{\mathcal{G}}_{2e}(x),\ \ &\forall x\in \Omega_e,\\
\p_{x_1} \dot{\mathcal{W}}_{2e}(x)- \p_{x_2} \dot{\mathcal{W}}_{1e}(x)= \dot{\mathcal{G}}_{3e}(x),\ \ &\forall x\in \Omega_e,\\
\sum_{i=1}^3 x_i \dot{\mathcal{W}}_{ie}(x)=0,\ \ &\forall x\in \{x\in \mathbb{R}^3: |x|=r_s\ \text{or } |x|=r_2 \}.
\end{cases}\ee
Therefore $\dot{\mathcal{W}}_{ie}\in C^{2,\alpha}(\overline{\Omega_e}), i=1,2,3$ with the estimate
\be\no
&&\sum_{i=1}^3\|\dot{W}_{i}\|_{C^{2,\alpha}(\overline{\mathbb{D}})}\leq C_*\sum_{i=1}^3\|\dot{\mathcal{W}}_i\|_{C^{2,\alpha}(\overline{\Omega_+})}\leq C_*\sum_{i=1}^3\|\dot{\mathcal{W}}_{ie}\|_{C^{2,\alpha}(\overline{\Omega_e})}\\\no
&&\leq C_*\sum_{i=1}^3\|\dot{\mathcal{G}}_{ie}\|_{C^{1,\alpha}(\overline{\Omega_e})}\leq C_*\sum_{i=1}^3\|\dot{\mathcal{G}}_{i}\|_{C^{1,\alpha}(\overline{\Omega_+})}\leq C_*\sum_{i=1}^3\|\dot{G}_{i}\|_{C^{1,\alpha}(\overline{\mathbb{D}})}\\\no&&\leq C_*(\sum_{j=1}^3\|G_j\|_{C^{1,\alpha}(\overline{\mathbb{D}})}+
\|\Upsilon\|_{C^{2,\alpha}(\overline{\mathbb{D}})})\leq C_*\sum_{j=1}^3\|G_j\|_{C^{1,\alpha}(\overline{\mathbb{D}})}\\\no
&&\leq C_*(\epsilon\|(\hat{{\bf W}}, \hat{W}_6)\|_{\Xi}+\|(\hat{{\bf W}}, \hat{W}_6)\|_{\Xi}^2)\leq C_*(\epsilon \delta_0 +\delta_0^2).
\ee
It follows from the second, the third and fourth equations in \eqref{den34} that
\be\no\begin{cases}
\frac{1+a^2}{2y_1}(\p_a \dot{W}_{\tau}+\frac1a \dot{W}_{\tau}-\frac{1}{a}\p_{\tau}\dot{W}_{a})(y_1,a,\tau)-\frac{a}{y_1} \dot{W}_{\tau}(y_1,a,\tau)=\dot{G}_{1}(y_1,a,\tau),\\
\frac{1+a^2}{2y_1}\p_a \dot{W}_1(y_1,a,\tau)-\p_{y_1} \dot{W}_{a}(y_1,a,\tau)-\frac{1}{y_1} \dot{W}_{a}(y_1,a,\tau)= -\dot{G}_{\tau}(y_1,a,\tau).
\end{cases}\ee
Since $\dot{W}_a(y_1,1,\tau)=0$, then $(\p_a\dot{W}_{\tau},\p_a \dot{W}_1)(y_1,1,\tau)=0$ for all $(y_1,\tau)\in \Sigma_w$. Rewrite the first equation in \eqref{den34} in the coordinates $(y_1,a,\tau)$:
\be\no
\p_{y_1}\dot{W}_1+\frac{1+a^2}{2y_1}(\p_a \dot{W}_a+ \frac1a \dot{W}_a+ \frac1a\p_{\tau} \dot{W}_{\tau})+\frac{2\dot{W}_1}{y_1}-\frac{a \dot{W}_a}{y_1}=0.
\ee
Differentiating the above equation and evaluating at $a=1$, one obtains that $(\p_a^2 \dot{W}_a+\p_a \dot{W}_a)(y_1,1,\tau)=0$ for all $(y_1,\tau)\in \Sigma_w$.

\end{proof}


Let $(W_1,W_2,W_3)$ be the solution to \eqref{den32}, and denote
\be\no
M_j(y)= W_j(y)- \dot{W}_j(y), j=1,2,3.
\ee
Then $M_j, j=1,2,3$ satisfy the following problem
\be\label{den36}\begin{cases}
d_1(y_1)\p_{y_1} M_1+\frac{1+|y'|^2}{2y_1}\sum_{j=2}^3 \p_{y_j} M_j +\frac{2M_1}{y_1}-\frac{1}{y_1}\sum_{j=2}^3 y_j M_j\\
\q\q+ d_2(y_1) M_1 =G_4(\hat{{\bf W}},\hat{W}_6),\ \text{in }\mathbb{D},\\
\frac{1+|y'|^2}{2y_1}(\p_{y_2} M_3-\p_{y_3} M_2)+ \frac{1}{y_1}(y_3 M_2-y_2 M_3)=0,\  \text{in }\mathbb{D},\\
\frac{1+|y'|^2}{2y_1}\p_{y_3} (M_1+d_3(y_1) M_1(r_s,y'))-\p_{y_1} M_3-\frac{M_3}{y_1}= 0,\ \text{in }\mathbb{D},\\
\p_{y_1} M_2-\frac{1+|y'|^2}{2y_1}\p_{y_2} (M_1+d_3(y_1) M_1(r_s,y'))+\frac{M_2}{y_1} = 0,\ \text{in }\mathbb{D},\\
\sum_{i=2}^3\p_{y_i}^2M_1(r_s,y')-b_0 b_1\p_{y_i}\b(\frac{2r_s M_i}{1+|y'|^2}\b)(r_s,y')=q_5(\hat{{\bf W}}(r_s,y'),\hat{W}_6(y')),\ \forall y'\in E,\\
M_1(r_2,y')+ d_3(r_2) M_1(r_s,y')=q_4(y'),\ \forall y'\in E,\\
y_2 M_2(y_1,y')+y_3 M_3(y_1,y')=0,\ \forall (y_1,y')\in \Sigma_w,\\
\sum_{j=2}^3 y_j\p_{y_j} M_1(r_s,y')-\frac{2b_0b_1 r_s}{1+|y'|^2}\sum_{j=2}^3 y_j M_j(r_s,y')=0, \text{on }\ \ y_1=r_s, |y'|=1.
\end{cases}\ee
where
\be\no
&&G_4(\hat{{\bf W}},\hat{W}_6)=G_0(\hat{{\bf W}},\hat{W}_6)+\bar{M}^2(y_1)\p_{y_1}\dot{W}_1-d_2(y_1)\dot{W}_1,\\\no
&&q_5(\hat{{\bf W}}(r_s,y'),\hat{W}_6(y'))=q_1(\hat{{\bf W}}(r_s,y'),\hat{W}_6(y'))+2r_s b_0b_1\sum_{j=2}^3\p_{y_j}\b(\frac{\dot{W}_j}{1+|y'|^2}\b)(r_s,y').
\ee

The second, third and fourth equations in \eqref{den36} can be rewritten as
\be\no
&&\p_{y_2}\b(\frac{2 y_1 M_3}{1+|y'|^2}\b)-\p_{y_3}\b(\frac{2 y_1 M_2}{1+|y'|^2}\b)=0,\\\no
&&\p_{y_3}(M_1(y_1,y') + d_3(y_1) M_1(r_s,y'))-\p_{y_1}\b(\frac{2 y_1 M_3}{1+|y'|^2}\b)=0,\\\no
&&\p_{y_1}\b(\frac{2 y_1 M_2}{1+|y'|^2}\b)-\p_{y_2}(M_1(y_1,y') + d_3(y_1) M_1(r_s,y'))=0,
\ee
which implies that the existence of a potential function $\phi$ such that
\be\no
&&M_1(y_1,y') + d_3(y_1) M_1(r_s,y')=\p_{y_1}\phi(y_1,y'),\\\no
&& M_j(y_1,y')=\frac{1+|y'|^2}{2y_1}\p_{y_j}\phi(y_1,y'),\ \ j=2,3.
\ee
Therefore
\be\no
M_1(r_s,y')=\frac 1{a_3}\p_{y_1}\phi(r_s,y'),\ M_1(y_1,y')=\p_{y_1}\phi(y_1,y')- \frac{1}{b_3} d_3(y_1)\p_{y_1}\phi(r_s,y')
\ee
with
\be\no
b_3=\frac{(\gamma-2) \bar{M}^2(r_s)+2}{\gamma \bar{M}^2(r_s)}>0.
\ee
Thus the problem \eqref{den36} is equivalent to
\be\label{den37}\begin{cases}
d_1(y_1) \p_{y_1}^2 \phi+\frac{1+|y'|^2}{4 y_1^2}\sum_{j=2}^3\p_{y_j}(1+|y'|^2)\p_{y_j}\phi)+\frac{2}{y_1}\p_{y_1}\phi+ d_2(y_1)\p_{y_1}\phi\\
\quad\quad-\frac{1+|y'|^2}{2y_1^2}\sum_{j=2}^3 y_j \p_{y_j}\phi-\frac{1}{b_3} d_4(y_1)\p_{y_1}\phi(r_s,y')=G_4(\hat{{\bf W}},\hat{W}_6),\ \text{in }\mathbb{D},\\
y_2 \p_{y_2}\phi(y_1,y') + y_3 \p_{y_3}\phi (y_1,y')=0,\ \forall (y_1,y')\in\Sigma_w,\\
\p_{y_1}\phi(r_2,y')=q_4(y'),\ \forall y'\in E,\\
(\p_{y_2}^2+\p_{y_3}^2)\left(\p_{y_1}\phi(r_s,y')-b_4 \phi(r_s,y')\right)=b_3q_5(\hat{{\bf W}}(r_s,y'),\hat{W}_6(y')),\ \forall y'\in E,\\
(y_2\p_{y_2}+y_3\p_{y_3})(\p_{y_1}\phi-b_4 \phi)(r_s,y')=0,\ \forall \ y_1=r_s, |y'|=1
\end{cases}\ee
where
\be\no
&&d_4(y_1)=d_1(y_1)d_3'(y_1)+(\frac{2}{y_1}+d_2(y_1)) d_3(y_1)\\\no
&&\quad=- \frac{b_2}{b_1}d_1(y_1)\frac{\bar{B}+\frac{1}{2}\bar{U}^2(y_1)}{\gamma\bar{K}}\frac{\bar{U}'(y_1)}{\bar{U}^2(y_1)}+(\frac{2}{y_1}+d_2(y_1)) d_3(y_1)\\\no
&&\quad=\frac{2b_2}{b_1\gamma\bar{K}y_1\bar{U}(y_1)}\bigg(\bar{B}+\frac{1}{2}\bar{U}^2(y_1)+(\bar{B}-\frac{1}{2}\bar{U}^2(y_1))\bigg(1+\frac{\bar{M}^2(2+(\gamma-1)\bar{M}^2)}{1-\bar{M}^2}\bigg)\bigg)\\\no
&&\quad=\frac{2b_2}{\gamma b_1\bar{K} y_1 \bar{U}(y_1)}\bigg(2\bar{B}+\frac{\bar{U}^2(y_1)(2+(\gamma-1)\bar{M}^2(y_1))}{(\gamma-1)(1-\bar{M}^2(y_1))}\bigg)>0,\\\no
&& b_4=b_0 b_1 b_3>0.
\ee

A key issue here is the solvability of the Poisson equation with homogeneous Neumann boundary condition (the last two boundary conditions in \eqref{den37}) for the derivation of the oblique boundary condition for the potential $\phi$ on the boundary $\{(r_s,y'): y'\in E\}$.
\begin{lemma}\label{oblique}({\bf The oblique boundary condition on the shock front.})
On the shock front $\{(r_s, y'): y'\in E\}$, there exists a unique $C^{2,\alpha}(\overline{E})$ function $m_s(y')$ such that
\be\no
\p_{y_1}\phi(r_s,y')-b_4 \phi(r_s,y')= m_s(y')
\ee
where $m_s(y')$ satisfies the Poisson equation with Neumann boundary conditions
\be\label{den38}\begin{cases}
(\p_{y_2}^2+\p_{y_3}^2)m_s(y')=b_3q_5(\hat{{\bf W}}(r_s,y'),\hat{W}_6(y')),\ \ &\text{in } E,\\
(y_2\p_{y_2}+y_3\p_{y_3})m_s(y')=0,\ &\forall |y'|=1,\\
\iint_{E} m_s(y') dy'=0
\end{cases}\ee
and the following estimate holds
\be\label{den39}
\|m_s\|_{C^{2,\alpha}(\overline{E})}\leq C_*\|q_5(\hat{{\bf W}}(r_s,y'),\hat{W}_6(y'))\|_{C^{\alpha}(\overline{E})}.
\ee
\end{lemma}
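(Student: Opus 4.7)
The plan is as follows. The last two boundary conditions in \eqref{den37} amount to asserting that the trace function
\be\no
M(y') := \p_{y_1}\phi(r_s,y') - b_4 \phi(r_s,y')
\ee
satisfies the Poisson equation $(\p_{y_2}^2+\p_{y_3}^2)M = b_3 q_5$ on the disk $E$ together with the homogeneous Neumann boundary condition $(y_2\p_{y_2}+y_3\p_{y_3})M=0$ on $\p E=\{|y'|=1\}$. I will construct $m_s$ as the unique zero-mean solution of this Poisson--Neumann problem; the identification $\p_{y_1}\phi(r_s,y')-b_4\phi(r_s,y')=m_s(y')$ then delivers the oblique boundary condition used throughout the remainder of Section \ref{proof}. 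Since $\phi$ itself is determined only up to an additive constant by the remaining conditions in \eqref{den37} and the operator $\phi\mapsto \p_{y_1}\phi-b_4\phi$ acts nontrivially on constants (because $b_4>0$), the zero-mean normalization of $m_s$ corresponds exactly to fixing this residual freedom.

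The only nontrivial input needed is the Fredholm compatibility condition $\iint_E b_3 q_5(\hat{{\bf W}}(r_s,y'),\hat{W}_6(y'))\,dy'=0$. To verify this, I would write $q_5 = q_1 + 2 r_s b_0 b_1 \sum_{j=2}^3 \p_{y_j}(\dot{W}_j/(1+|y'|^2))(r_s,\cdot)$ with $q_1$ in its already-divergence form involving $g_j/(1+|y'|^2)$ and $\p_{y_j}R_1$, and then apply the divergence theorem on $E$. Each of the three resulting boundary integrals on $\{|y'|=1\}$ reduces to a quantity of the form $y_2 F_2(r_s,y')+y_3 F_3(r_s,y') = a(\cos\tau F_2+\sin\tau F_3)\big|_{a=1}$ for $(F_2,F_3)$ equal to $(\dot{W}_2,\dot{W}_3)$, $(g_2,g_3)$, or $(\p_{y_2}R_1,\p_{y_3}R_1)$. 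Each of these tangential traces vanishes on $\{a=1\}$: the first by the wall slip condition in \eqref{den34}, the second by \eqref{j211}, and the third by \eqref{rv41} together with $y_2\p_{y_2}+y_3\p_{y_3}=a\p_a$. Summing gives $\iint_E q_5\,dy'=0$.

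Once the compatibility condition is secured, existence and uniqueness of $m_s\in C^{2,\alpha}(\overline{E})$ solving \eqref{den38} follow from standard Schauder theory for the Neumann problem on the smooth planar domain $E$, with the zero-mean normalization removing the one-dimensional kernel of constants. The estimate \eqref{den39} is the standard Schauder estimate for Neumann problems on a domain with $C^{2,\alpha}$ boundary, applied to the right-hand side $b_3 q_5\in C^{\alpha}(\overline{E})$.

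The main obstacle, though essentially organizational, lies in the compatibility integral; once the three trace identities above are in place the remainder of the argument is routine elliptic theory. This step also explains why the considerable effort earlier in the paper to propagate the compatibility conditions \eqref{comp1} and \eqref{pressure-cp} both through the iteration class $\Xi$ and across the shock via \eqref{j211} and \eqref{rv41} was indispensable: without them the Poisson--Neumann problem \eqref{den38} would fail to be solvable, and with it the entire reduction of the deformation--curl system to a scalar elliptic equation for $\phi$ with oblique boundary condition on the shock front would collapse.
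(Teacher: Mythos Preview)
Your proposal is correct and follows essentially the same route as the paper: verify the Fredholm compatibility $\iint_E q_5\,dy'=0$ by writing $q_5$ in divergence form, applying the divergence theorem, and killing the three boundary traces via the slip condition for $\dot{W}_j$ in \eqref{den34}, the identity for $g_j$ in \eqref{j211}, and $\p_a R_1|_{a=1}=0$ from \eqref{rv41}; then invoke standard Schauder theory for the Neumann problem on the disk. Your added remarks on the zero-mean normalization and on why the earlier compatibility machinery was needed are helpful exposition but do not depart from the paper's argument.
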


\begin{proof}
Due to the compatibility conditions \eqref{cp100}, \eqref{j211} and \eqref{rv41}, the following solvability condition for \eqref{den38} holds
\be\no
&&\iint_{E} q_5(\hat{{\bf W}}(r_s,y'),\hat{W}_6(y')) dy'=\iint_{E} q_1(\hat{{\bf W}}(r_s,y'),\hat{W}_6(y')) dy'\\\no
&&=\oint_{|y'|=1}\sum_{j=2}^3 y_j g_j(\hat{{\bf W}}(r_s,y'), \hat{W}_6(y'))+y_j \p_{y_j}\{R_1(\hat{{\bf W}}(r_s,y'), \hat{W}_6(y'))\} ds =0.
\ee
There exists a unique solution $m_s(y')\in C^{2,\alpha}(\overline{E})$ to \eqref{den38} satisfying \eqref{den39} with the estimate
\be\no
&&\|m_s\|_{C^{2,\alpha}(\overline{E})}\leq C_*\|q_5(\hat{{\bf W}}(r_s,y'),\hat{W}_6(y'))\|_{C^{\alpha}(\overline{E})}\\\no
&&\leq C_*(\|R_1(y')\|_{C^{2,\alpha}(\overline{E})}+\sum_{j=2}^3\|\dot{W}_j(r_s,\cdot)\|_{C^{1,\alpha}(\overline{E})}+\|g_j(\hat{{\bf W}}(r_s,y'), \hat{W}_6(y'))\|_{C^{1,\alpha}(\overline{E})})\\\no
&&\leq C_*(\epsilon\|(\hat{{\bf W}}, \hat{W}_6)\|_{\Xi}+\|(\hat{{\bf W}}, \hat{W}_6)\|_{\Xi}^2\leq C_*(\epsilon \delta_0 +\delta_0^2).
\ee

\end{proof}

Using the oblique boundary condition at $y_1=r_s$, we could replace the nonlocal term $\frac{d_4(y_1)}{b_3}\p_{y_1}\phi(r_s,y')$ in the first equation of \eqref{den37} by $b_0b_1d_4(y_1)\phi(r_s,y')$. Thus the problem \eqref{den37} is simplified to a second order elliptic equation with a nonlocal term involving only the trace of the potential function and oblique boundary conditions:
\be\label{den41}\begin{cases}
\displaystyle d_1(y_1) \p_{y_1}^2 \phi+\frac{1+|y'|^2}{4 y_1^2}\sum_{j=2}^3\p_{y_j}(1+|y'|^2)\p_{y_j}\phi)+\frac{2}{y_1}\p_{y_1}\phi+ d_2(y_1)\p_{y_1}\phi\\
\quad\quad-\frac{1+|y'|^2}{2y_1^2}\sum_{j=2}^3 y_j \p_{y_j}\phi-b_0 b_1 d_4(y_1)\phi(r_s,y')=G_5(\hat{{\bf W}},\hat{W}_6),\ \text{in }\mathbb{D},\\
\p_{y_1}\phi(r_s,y')-b_4 \phi(r_s,y')=m_s(y'),\ \forall y'\in E,\\
y_2 \p_{y_2}\phi(y_1,y') + y_3 \p_{y_3}\phi (y_1,y')=0,\ \forall (y_1,y')\in\Sigma_w,\\
\p_{y_1}\phi(r_2,y')=m_2(y'),\ \forall y'\in E,
\end{cases}\ee
where
\be\no
&&G_5(y)=G_4(y)+ \frac{d_4(y_1)}{b_3} m_s(y'),\\\no
&&m_2(y')=q_4(\hat{{\bf W}}(r_s,y'),\hat{W}_6(y')).
\ee
It follows from \eqref{pres5} and the boundary condition in \eqref{den38} that
\be\label{mcp}
\p_a m_j(1,\tau)=0,\ \ \ \forall \tau\in \mathbb{T}_{2\pi}, j=s,2.
\ee
Furthermore, one can verify that (See the Appendix \S\ref{appendix}):
\be\label{g5cp}
&&\p_a G_5(y_1,1,\tau)=0,\ \ \ \forall (y_1,\tau)\in [r_s,r_2]\times \mathbb{T}_{2\pi}.
\ee

To show the existence and uniqueness of smooth solutions to \eqref{den41}, we first investigate the following eigenvalue problem.
\begin{lemma}\label{eig}{\it There exists a monotone increasing nonnegative real number sequence $\{\lambda_m\}_{m=1}^{\infty}$ of eigenvalues and the associated eigenvectors $\{Y_m(y')\}_{j=1}^{\infty}$ to the following eigenvalue problem
\be\label{eigen}\begin{cases}
\displaystyle (1+|y'|^2)\sum_{j=2}^3\p_{y_j}((1+|y'|^2)\p_{y_j} Y)-2(1+|y'|^2)\sum_{j=2}^3 y_j \p_{y_j} Y + \lambda Y=0,\ \ in \ E,\\
(y_2\p_{y_2} + y_3 \p_{y_3}) Y=0,\ \ \ \text{on}\ \ \p E.
\end{cases}\ee
Furthermore, the eigenvectors $\{Y_m(y')\}_{j=1}^{\infty}$ consists of an orthonormal basis of $L^2(E)$ with the inner product
\be\label{l2}
(f_1(y'), f_2(y'))_0=\iint_E\frac{f_1(y') f_2(y')}{(1+|y'|^2)^2} dy'.
\ee
}\end{lemma}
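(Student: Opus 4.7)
The plan is to first reduce \eqref{eigen} to a standard weighted Neumann eigenvalue problem for the Laplacian on the disk $E$ by an elementary computation. Expanding $\p_{y_j}((1+|y'|^2)\p_{y_j} Y) = 2y_j \p_{y_j} Y + (1+|y'|^2)\p_{y_j}^2 Y$, summing over $j=2,3$, and multiplying by $(1+|y'|^2)$, one sees that the first-order drift term $2(1+|y'|^2)\sum_j y_j \p_{y_j}Y$ exactly cancels the explicit drift in \eqref{eigen}, so \eqref{eigen} simplifies to
\begin{equation*}
-\Delta_{y'} Y \;=\; \frac{\lambda}{(1+|y'|^2)^2}\, Y \quad \text{in } E, \qquad \p_\nu Y = 0 \quad \text{on } \p E,
\end{equation*}
where $\p_\nu$ is the outward normal derivative on the unit circle (on $|y'|=1$ the boundary operator $y_2\p_{y_2}+y_3\p_{y_3}$ coincides with the radial, hence normal, derivative). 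The weighted inner product \eqref{l2} is precisely the natural $L^2_w(E)$-inner product with weight $w(y'):=(1+|y'|^2)^{-2}$. This simplification is not an accident: via the stereographic projection $\mathscr{S}$ introduced in \eqref{st1}, the operator on the left-hand side is a scalar multiple of the Laplace-Beltrami operator on the upper hemisphere, and the boundary condition becomes the natural vanishing-normal-derivative condition on the equator.

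The second step is to invoke standard spectral theory for compact self-adjoint operators. Since $\overline{E}$ is compact and $w$ is bounded between two positive constants on $\overline{E}$, the space $L^2_w(E)$ is equivalent as a topological vector space to $L^2(E)$, and the embedding $H^1(E)\hookrightarrow L^2_w(E)$ is compact by Rellich-Kondrachov. Define the solution operator $T:L^2_w(E)\to L^2_w(E)$ by setting $Tf\equiv 0$ on constants in the $(\cdot,\cdot)_0$-orthogonal decomposition and, for $f$ with $(f,1)_0=0$, by letting $Tf$ be the unique weak solution $u\in H^1(E)$ to the Neumann problem $-\Delta u = wf$ in $E$, $\p_\nu u =0$ on $\p E$, normalized by $(u,1)_0=0$. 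The Fredholm solvability condition $\iint_E wf\,dy'=(f,1)_0 = 0$ is built into the definition, so $T$ is well-defined and bounded; self-adjointness with respect to $(\cdot,\cdot)_0$ follows from integration by parts, since $(Tf,g)_0 = \iint_E \nabla (Tf)\cdot \nabla (Tg)\, dy'$ (the Neumann boundary term vanishes), which is manifestly symmetric in $f,g$; and compactness follows from the compact embedding noted above.

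The third and final step is to apply the Hilbert-Schmidt spectral theorem to $T$: it produces an orthonormal basis $\{Y_m\}_{m=1}^\infty$ of $L^2_w(E)$ consisting of eigenvectors, with real eigenvalues $\mu_m \to 0$ of finite multiplicity. Reciprocating and reordering yields the required monotone sequence $0=\lambda_1<\lambda_2\le\lambda_3\le\cdots\to+\infty$, where $\lambda_1=0$ corresponds to the constants (which are precisely the kernel of the Dirichlet form $a(u,v):=\iint_E \nabla u\cdot\nabla v\,dy'$ with Neumann boundary condition). Nonnegativity of each $\lambda_m$ is immediate from $\lambda_m (Y_m,Y_m)_0 = a(Y_m,Y_m)\ge 0$. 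The only mildly delicate issue is verifying the self-adjointness of $T$ on $L^2_w(E)$, which, as observed above, reduces after two integration by parts to the symmetric Dirichlet form once the Neumann boundary terms are shown to vanish; once this is in hand, the remainder is the standard Hilbert-Schmidt machinery.
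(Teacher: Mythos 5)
Your proposal is correct, and at its core it runs on the same engine as the paper's proof: a compact embedding of $H^1(E)$ into the weighted $L^2$ space with weight $(1+|y'|^2)^{-2}$, a self-adjoint variational formulation, the spectral theorem, and nonnegativity of $\lambda$ from the identity $\lambda\,(Y,Y)_0=\iint_E|\nabla Y|^2\,dy'$. The packaging differs in two respects. First, you explicitly carry out the cancellation that reduces \eqref{eigen} to $-\Delta Y=\lambda(1+|y'|^2)^{-2}Y$ with the plain Neumann condition; the paper never writes this down and instead keeps the operator in its divergence-plus-drift form, verifying the energy identity by a slightly more involved integration by parts. Your reduction is a genuine clarification: it makes the self-adjointness and the choice of the inner product \eqref{l2} transparent, and it matches the paper's remark that this weight is the pullback of the spherical measure under stereographic projection. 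Second, to handle the degenerate Neumann form you build the solution operator $T$ on the orthogonal complement of the constants (with the Fredholm condition $(f,1)_0=0$ built in) and apply the Hilbert--Schmidt theorem, whereas the paper sidesteps the kernel by working with the coercive shifted form $Q(f,f)=\iint_E(|\nabla f|^2+(1+|y'|^2)^{-2}f^2)\,dy'$, quoting a spectral theorem for coercive forms and then setting $\lambda_m=\mu_m-1$. These are the two standard equivalent devices for the Neumann zero mode; both yield the same orthonormal basis and the same spectrum $0=\lambda_1<\lambda_2\le\cdots\to+\infty$, so nothing is lost either way.
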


\begin{proof}
We use the spectral theorem \cite[Theorem 6.3.4]{bb92} to prove the Lemma. Let $H^1(E)$ be the Hilbert space with the inner product
\be\no
(f_1, f_2)_1= \iint_{E} \b(\sum_{j=2}^3 \p_{y_j} f_1 \p_{y_j} f_2 +\frac{f_1(y') f_2(y')}{(1+|y'|^2)^2} \b)dy'
\ee
Then $H^1(E)$ is dense in $L^2(E)$ and the identical embedding $H^1(E)\hookrightarrow L^2(E)$ is continuous and compact. Consider the symmetric bilinear form $Q$ in $H^1(E)$ as follows
\be\no
Q(f_1,f_2)=\iint_{E} \b(\sum_{j=2}^3 \p_{y_j} f_1 \p_{y_j} f_2 +\frac{f_1(y') f_2(y')}{(1+|y'|^2)^2} \b)dy',
\ee
then $Q$ is continuous and coercive. Then by the spectral theorem \cite[Theorem 6.3.4]{bb92}, there exists a monotone increasing sequence $\{\mu_m\}_{m=1}^{\infty}$ of eigenvalues
\be\no
0<\mu_1\leq \mu_2\leq \cdots\leq \mu_m\to +\infty
\ee
and an orthonormal basis $\{Y_m(y')\}_{j=1}^{\infty}\subset H^1(E)$ of $L^2(E)$ with the inner product in \eqref{l2} such that
\be\no
Q(Y_m, Y)= (\mu_m Y_m, Y)_0,\ \ \forall Y\in H^1(E),\ \ \forall m\in \mathbb{N}.
\ee
A standard argument shows that $Y_m$ satisfies the problem \eqref{eigen} with $\lambda_m=\mu_m-1$ for all $m\in \mathbb{N}$. Multiplying the equation in \eqref{eigen} by $\frac{Y(y')}{(1+|y'|^2)^2}$ and integrating over $E$, after an integration by parts, one gets
\be\no
&&\lambda \iint_{E} \frac{Y^2(y')}{(1+|y'|^2)^2} dy'=\sum_{j=2}^3\iint_{E}\p_{y_j}\big(\frac{Y}{1+|y'|^2}\big)(1+|y'|^2)\p_{y_j} Y+\frac{2 y_j Y\p_{y_j} Y}{1+|y'|^2} dy'\\\no
&&= \iint_{E} \sum_{j=2}^3 |\p_{y_j} Y|^2dy'.
\ee
Thus $0=\lambda_1 <\lambda_2\leq\cdots\leq \lambda_m \to +\infty$ as $m\to\infty$.

\end{proof}

\begin{remark}
{\it The measure $\frac{1}{(1+|y'|^2)^2} dy'$ used in \eqref{l2} and \eqref{den411} is the correspondence of the canonical measure $\sin\theta d\theta d\varphi$ on the unit sphere under the stereographic projection.
}\end{remark}

Now we can use the Lax-Milgram theorem and the Fredholm alternative of the second order elliptic equation to prove the existence and uniqueness of weak solution to \eqref{den41}. A function $\phi\in H^1(\mathbb{D})$ is said to be a weak solution to \eqref{den41}, if for any $\psi\in H^1(\mathbb{D})$, the following equality holds
\be\label{den411}
\mathcal{B}(\phi,\psi)= \mathcal{L}(\psi),\ \ \forall \psi\in H^1(\mathbb{D}),
\ee
where
\be\no
&&\mathcal{B}(\phi,\psi)=\iiint_{\mathbb{D}} \frac{d_1(y_1)\p_{y_1}\phi \p_{y_1}\psi}{(1+|y'|^2)^2} + \frac{1}{4y_1^2}\sum_{j=2}^3\p_{y_j}\phi \p_{y_j}\psi - \frac{d_5(y_1) \p_{y_1}\phi \psi}{(1+|y'|^2)^2}\\\no
&&\quad\quad +\frac{b_0b_1 d_4(y_1)\phi(r_s,y')\psi(y_1,y')}{(1+|y'|^2)^2}dy_1 dy' + \iint_{E}\frac{d_1(r_s)b_4 \phi(r_s,y')\psi(r_s,y')}{(1+|y'|^2)^2} dy',\\\no
&&\mathcal{L}(\psi)= -\iiint_{\mathbb{D}} \frac{\psi G_5}{(1+|y'|^2)^2} dy_1 dy'\\\no
&&\q\q\q\q\q\q + \iint_{E} \frac{d_1(r_2) m_2(y')\psi(r_2,y') -d_1(r_s) m_s(y')\psi(r_s,y')}{(1+|y'|^2)^2} dy',\\\no
&&d_5(y_1):=\frac{2}{y_1}+d_2(y_1)-d_1'(y_1).
\ee

\begin{lemma}\label{weak}
{\it There exists a positive constant $K$ depending only on the background solution such that the following problem has a unique weak solution in $H^1(\mathbb{D})$
\be\label{den42}\begin{cases}
\p_{y_1}(d_1(y_1) \p_{y_1} \phi)+\frac{1+|y'|^2}{4y_1^2}\sum_{j=2}^3\p_{y_j}((1+|y'|^2)\p_{y_j}\phi)-\frac{1+|y'|^2}{2y_1^2} \sum_{j=2}^3 y_j\p_{y_j}\phi\\
\quad\quad+ d_5(y_1)\p_{y_1}\phi-b_0b_1 d_4(y_1)\phi(r_s,y')-K\phi=G_5(y),\ \text{in }\mathbb{D},\\
\p_{y_1}\phi(r_s,y')-b_4 \phi(r_s,y')=m_s(y'),\ \ \forall y'\in E,\\
y_2 \p_{y_2}\phi(y_1,y') + y_3 \p_{y_3}\phi (y_1,y')=0,\ \ \forall (y_1,y')\in\Sigma_w,\\
\p_{y_1}\phi(r_2,y')=m_2(y'),\ \ \forall y'\in E.
\end{cases}\ee
}\end{lemma}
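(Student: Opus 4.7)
The strategy is to reformulate \eqref{den42} as a coercive variational problem on $H^1(\mathbb{D})$ and apply the Lax--Milgram theorem. Let $w(y')=(1+|y'|^2)^{-2}$; since $|y'|<1$ on $\mathbb{D}$, $w$ is bounded above and below by positive constants, so the weighted inner product appearing in Lemma \ref{eig} is equivalent to the standard $H^1(\mathbb{D})$ inner product. I would multiply the equation in \eqref{den42} by $\psi\,w$ for a test function $\psi\in H^1(\mathbb{D})$, integrate by parts in $y_1$ and $y'$, and substitute the oblique condition $\p_{y_1}\phi(r_s,\cdot)=b_4\phi(r_s,\cdot)+m_s$, the Neumann condition at $y_1=r_2$, and the natural tangential condition on $\Sigma_w$. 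This yields the weak formulation
\[
\tilde{\mathcal{B}}(\phi,\psi):=\mathcal{B}(\phi,\psi)+K\iiint_{\mathbb{D}}\frac{\phi\psi}{(1+|y'|^2)^2}\,dy_1\,dy'=\mathcal{L}(\psi),\qquad\forall\psi\in H^1(\mathbb{D}),
\]
with $\mathcal{B},\mathcal{L}$ as in \eqref{den411}.

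Continuity of $\tilde{\mathcal{B}}$ and $\mathcal{L}$ on $H^1(\mathbb{D})$ follows from Cauchy--Schwarz once one notes that $d_1,d_2,d_4,d_5$ are bounded on $[r_s,r_2]$; the boundary integral $\iint_E d_1(r_s)b_4\,\phi(r_s,\cdot)\psi(r_s,\cdot)\,w\,dy'$ and the nonlocal volume term $\iiint_{\mathbb{D}}b_0b_1 d_4(y_1)\phi(r_s,y')\psi(y_1,y')\,w\,dy$ are each absorbed via the trace inequality $\|u(r_s,\cdot)\|_{L^2(E)}\le C\|u\|_{H^1(\mathbb{D})}$, and $\mathcal{L}$ is handled analogously using $G_5\in L^2(\mathbb{D})$ and $m_s,m_2\in L^2(E)$.

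The decisive step is coercivity. Setting $\psi=\phi$, the leading quadratic terms give
\[
\iiint_{\mathbb{D}}\Big(\frac{d_1(y_1)|\p_{y_1}\phi|^2}{(1+|y'|^2)^2}+\frac{|\nabla_{y'}\phi|^2}{4y_1^2}\Big)\,dy+\iint_E\frac{d_1(r_s)b_4\,\phi(r_s,y')^2}{(1+|y'|^2)^2}\,dy'+K\iiint_{\mathbb{D}}\phi^2\,w\,dy,
\]
all nonnegative because $d_1>0$ on $[r_s,r_2]$ (subsonic) and $b_4>0$. The drift contribution $-\iiint d_5\,\p_{y_1}\phi\cdot\phi\,w\,dy$ is absorbed by Young's inequality with a small parameter $\eta$, producing at worst $\eta\|\p_{y_1}\phi\|_{L^2}^2+C_\eta\|\phi\|_{L^2}^2$. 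The nonlocal term is handled by combining Young's inequality with the trace theorem,
\[
\Big|\iiint_{\mathbb{D}}b_0b_1 d_4\,\phi(r_s,y')\,\phi(y_1,y')\,w\,dy\Big|\le\eta\|\phi\|_{H^1(\mathbb{D})}^2+C_\eta\|\phi\|_{L^2(\mathbb{D})}^2.
\]
Choosing $\eta$ so small that the gradient terms are retained, and then $K$ large enough (depending on $\eta$ and the background solution) to dominate the accumulated $C_\eta\|\phi\|_{L^2}^2$ terms as well as to supply an $L^2$ lower bound in place of an otherwise missing Poincar\'e inequality, I obtain $\tilde{\mathcal{B}}(\phi,\phi)\ge c_0\|\phi\|_{H^1(\mathbb{D})}^2$. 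Lax--Milgram then delivers a unique weak solution.

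The expected main obstacle is precisely the nonlocal coupling $b_0b_1d_4(y_1)\phi(r_s,y')$: because it links the bulk unknown to its own trace, standard elliptic coercivity is not automatic even with the positive Robin contribution, and a Poincar\'e-type lower bound is unavailable since all boundary conditions are of Neumann/Robin type. The role of the $-K\phi$ stabilization is exactly to furnish, by brute force, the $L^2$ control that is needed to close the coercivity estimate against the drift and nonlocal error terms; this is why Lemma \ref{weak} is stated for the perturbed problem rather than directly for \eqref{den41}.
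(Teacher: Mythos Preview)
Your proposal is correct and follows essentially the same route as the paper: derive the weak formulation \eqref{den421} against the weight $(1+|y'|^2)^{-2}$, verify boundedness by Cauchy--Schwarz plus the trace inequality, prove coercivity by absorbing the drift term $d_5\,\p_{y_1}\phi\cdot\phi$ and the nonlocal trace term via Young's inequality with a small parameter, and then choose $K$ large to dominate the resulting $L^2$ errors before invoking Lax--Milgram. The only cosmetic difference is that the paper writes the trace control in the explicit form $\iint_E \phi(r_s,\cdot)^2\,w\,dy'\le \epsilon\|\p_{y_1}\phi\|_{L^2}^2+C_0\epsilon^{-1}\|\phi\|_{L^2}^2$ and retains the positive Robin boundary contribution $d_1(r_s)b_4\|\phi(r_s,\cdot)\|_{L^2(E)}^2$ as a separate good term in the lower bound, whereas you fold the trace directly into $\eta\|\phi\|_{H^1}^2$; both versions close the same way.
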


\begin{proof}
The weak formulation to \eqref{den42} is the existence of a $H^1(\mathbb{D})$ function $\phi$ such that
\be\label{den421}
\mathcal{B}_K(\phi,\psi):=\mathcal{B}(\phi,\psi) + K\iiint_{\mathbb{D}} \frac{\phi\psi}{(1+|y'|^2)^2} dy= \mathcal{L}(\psi),\ \ \forall \psi\in H^1(\mathbb{D}).
\ee

For any $\epsilon>0$, there holds
\be\no
\iint_{E}\frac{\phi^2(r_s,y')}{(1+|y'|^2)^2}dy'\leq  \iiint_{\mathbb{D}} \frac{C_0}{\epsilon}\frac{\phi^2(y_1,y')}{(1+|y'|^2)^2}+ \frac{\epsilon(\p_{y_1}\phi)^2(y_1,y')}{(1+|y'|^2)^2} dy' dy_1.
\ee
One can verify the boundedness and coercivity of $\mathcal{B}_K$ as follows
\be\no
&&|\mathcal{B}_K(\phi,\psi)|\leq C_0\|\phi\|_{H^1(\mathbb{D})}\|\psi\|_{H^1(\mathbb{D})},\\\no
&&|\mathcal{L}(\psi)|\leq C_0(\|G_5\|_{L^2(\mathbb{D})}+\sum_{j=s,2}\|m_j\|_{L^2(E)})\|\psi\|_{H^1(\mathbb{D})}
\ee
and
\be\no
&&\mathcal{B}_K(\phi,\phi)=\iiint_{\mathbb{D}} \frac{d_1(y_1)(\p_{y_1}\phi)^2}{(1+|y'|^2)^2} +\frac{1}{4y_1^2}\sum_{j=2}^3(\p_{y_j}\phi)^2-\frac{d_5(y_1) \p_{y_1}\phi \phi}{(1+|y'|^2)^2} \\\no
&&+ \frac{b_0b_1 d_4\phi(r_s,y')\phi(y_1,y')+K (\phi(y_1,y'))^2}{(1+|y'|^2)^2}dy_1 dy' + \iint_{E} \frac{d_1(r_s)b_4 (\phi(r_s,y'))^2}{(1+|y'|^2)^2} dy',\\\no
&&\geq C_*(\|\nabla\phi\|_{L^2(\mathbb{D})}^2+\|\phi(r_s,\cdot)\|_{L^2(E)}^2)+K\|\phi\|_{L^2(\mathbb{D})}^2-\frac{C_*}{4}\|\p_{y_1}\phi\|_{L^2(\mathbb{D})}^2\\\no
&&\quad-\frac{C_*}{4}\|\phi(r_s,\cdot)\|_{L^2(E)}^2-\tilde{C}_*\|\phi\|_{L^2(\mathbb{D})}^2\\\no
&&\geq \frac{C_*}{2}(\|\nabla\phi\|_{L^2(\mathbb{D})}^2+\|\phi(r_s,\cdot)\|_{L^2(E)}^2)+\frac{K}{2}\|\phi\|_{L^2(\mathbb{D})}^2,
\ee
provided that $K$ is large enough. Then by the Lax-Milgram theorem, there exists a unique $H^1(\mathbb{D})$ solution $\phi$ satisfying \eqref{den421}, which completes the proof of Lemma \ref{weak}.
\end{proof}

Now we are ready to solve the problem \eqref{den41}.

\begin{proposition}\label{solvability}
{\it Suppose that $(m_s,m_2)\in (C^{2,\alpha}(\overline{E}))^2$ and $G_5\in C^{1,\alpha}(\overline{\mathbb{D}})$ satisfy the compatibility conditions \eqref{mcp} and \eqref{g5cp}. Then there exists a unique solution $\phi\in C^{3,\alpha}(\overline{\mathbb{D}})$ to the problem \eqref{den41} with the estimate
\be\label{den414}
\|\phi\|_{C^{3,\alpha}(\overline{\mathbb{D}})}\leq C_*(\|G_5\|_{C^{1,\alpha}(\overline{\mathbb{D}})}+\sum_{j=s,2}\|m_j\|_{C^{2,\alpha}(\overline{E})}),
\ee
where the constant $C_*$ depends only on the coefficients $d_1,d_4,d_5, b_3,b_4$ and thus depends only the background solution.
}\end{proposition}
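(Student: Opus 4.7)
The plan is to first combine Lemma~\ref{weak} with the Fredholm alternative to reduce the solvability of \eqref{den41} in $H^1(\mathbb{D})$ to a uniqueness statement for the homogeneous problem, then establish uniqueness by expanding along the eigenbasis of Lemma~\ref{eig}, and finally upgrade the weak solution to $C^{3,\alpha}(\overline{\mathbb{D}})$ via Schauder theory combined with the symmetric reflection device from Lemma~\ref{lapace1}. For the Fredholm step, denote by $T_K:H^1(\mathbb{D})\to (H^1(\mathbb{D}))^*$ the operator associated with the bilinear form $\mathcal{B}_K$ from Lemma~\ref{weak}, which is invertible. The weak formulation of \eqref{den41} can be written as $(I - K\,T_K^{-1}\iota)\phi = T_K^{-1}\mathcal{L}$, where $\iota:H^1\hookrightarrow L^2\hookrightarrow (H^1)^*$ is compact by Rellich--Kondrachov, so this is a compact perturbation of the identity on $H^1(\mathbb{D})$. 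Hence existence and uniqueness of a weak solution reduce to uniqueness for the homogeneous version.

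For uniqueness, I would take any $\phi\in H^1(\mathbb{D})$ solving \eqref{den41} with $G_5=m_s=m_2\equiv 0$ and expand $\phi(y_1,y')=\sum_{m\ge 1}\phi_m(y_1)Y_m(y')$ in the basis of Lemma~\ref{eig}. Substituting into the PDE and using the eigenvalue identity decouples the problem into the family of ODEs
\[
d_1(y_1)\phi_m''+\Bigl(\tfrac{2}{y_1}+d_2(y_1)\Bigr)\phi_m'-\tfrac{\lambda_m}{4y_1^2}\phi_m = b_0b_1\,d_4(y_1)\,\phi_m(r_s),
\]
with $\phi_m'(r_s)=b_4\phi_m(r_s)$ and $\phi_m'(r_2)=0$. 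Setting $c=\phi_m(r_s)$, the Cauchy data $(\phi_m(r_s),\phi_m'(r_s))=(c,b_4 c)$ together with the source $b_0b_1d_4(y_1)c$ uniquely determine $\phi_m$ on $[r_s,r_2]$, and the remaining condition $\phi_m'(r_2)=0$ becomes a linear scalar equation $\Lambda_m c=0$. The plan is to show $\Lambda_m\ne 0$ for every $m$; this is where the explicit positivity of the coefficients $d_1>0$ (subsonic), $d_4>0$, $b_4>0$, together with $\lambda_m\ge 0$ from Lemma~\ref{eig}, enters decisively, and can be verified either by a direct shooting/energy estimate or by multiplying the ODE by a carefully chosen integrating factor and integrating by parts. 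The zero mode $\lambda_1=0$ is handled separately by a direct integration argument that uses only the positivity of $d_4$ and $b_4$.

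For regularity, once $\phi\in H^1(\mathbb{D})$ is obtained, I would iterate: interior and boundary Schauder estimates for the second-order elliptic equation with oblique (Robin at $y_1=r_s$ and Neumann at $y_1=r_2$) boundary conditions apply, treating the nonlocal term $b_0b_1d_4(y_1)\phi(r_s,y')$ as a known source whose regularity is inherited from the boundary trace of $\phi$. Regularity up to the nozzle wall $\Sigma_w$ is obtained by the symmetric extension across $\{x_1=0\}$ in the original spherical projection coordinates exactly as in the proof of Lemma~\ref{lapace1}, with the compatibility conditions \eqref{mcp} and \eqref{g5cp} ensuring that the extended boundary data and right-hand side remain $C^{2,\alpha}$ and $C^{1,\alpha}$, thereby eliminating corner singularities along the edges $\{(r_s,y'):|y'|=1\}\cup\{(r_2,y'):|y'|=1\}$. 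The main obstacle is the uniqueness step: the nonlocal term $\phi(r_s,y')$ prevents a direct maximum principle argument, and the nondegeneracy of the functional $\Lambda_m$ must be read off from the sign structure of $d_1,d_4,b_4$ established in Section~\S\ref{22}. This is precisely the mechanism highlighted in Remark~\ref{r3} that allows the removal of the ``Structure Condition'' of \cite{lxy16}.
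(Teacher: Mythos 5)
Your proposal follows essentially the same route as the paper: existence via the Fredholm alternative built on Lemma \ref{weak}, uniqueness via the eigenfunction expansion of Lemma \ref{eig} reducing to a family of two-point ODE problems whose nondegeneracy rests on the signs of $d_1,d_4,b_4$ and $\lambda_m\ge 0$ (the paper invokes the maximum principle and Hopf's lemma where you propose a shooting/energy argument), and the $C^{3,\alpha}$ upgrade via Schauder estimates combined with the symmetric reflection across $\{x_1=0\}$ using the compatibility conditions \eqref{mcp} and \eqref{g5cp}. The only organizational difference is that the paper carries out the regularity improvement for an arbitrary $H^1$ weak solution \emph{before} the uniqueness step, so that the eigenfunction-expansion ODEs can be read classically.
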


\begin{proof}

We first improve the regularity of any $H^1(\mathbb{D})$ weak solutions to \eqref{den41}. The goal is to show that for any weak solution $\phi\in H^1(\mathbb{D})$ to \eqref{den41}, $\phi$ indeed has a better regularity $\phi\in C^{3,\alpha}(\overline{\mathbb{D}})$ with the following estimate:
\be\label{den412}
\|\phi\|_{C^{3,\alpha}(\overline{\mathbb{D}})}\leq C_*(\|\phi\|_{H^1(\mathbb{D})}+\|G_5\|_{C^{1,\alpha}(\overline{\mathbb{D}})}+\sum_{j=1}^2\|m_j\|_{C^{2,\alpha}(\overline{E})}).
\ee

To this end, one can rewrite the first equation in \eqref{den41} as
\be\no
\p_{y_1}(d_1(y_1) \p_{y_1} \phi)+\frac{1+|y'|^2}{4y_1^2}\sum_{j=2}^3\p_{y_j}((1+|y'|^2)\p_{y_j}\phi)-\frac{1+|y'|^2}{2y_1^2} \sum_{j=2}^3 y_j\p_{y_j}\phi\\\no
\quad\quad+ d_5(y_1)\p_{y_1}\phi=G_6(y):=G_5+b_0b_1 d_4(y_1)\phi(r_s,y').
\ee
Since $\phi\in H^1(\mathbb{D}_+)$, $\phi(r_s,y')\in L^2(E)$ by the trace theorem. Together with the boundary conditions in \eqref{den41}, one can apply \cite[Theorems 5.36 and 5.45]{lie13} to obtain global $L^{\infty}$ bound and $C^{\alpha_1}$ estimates (for some $\alpha_1\in (0,1)$) for $\phi$ as follows,
\be\no
\|\phi\|_{C^{0,\alpha_1}(\overline{\mathbb{D}})}&\leq& C_*\bigg(\|a_0a_1d_4(y_1)\phi(r_s,y')\|_{L^2(E)}+\|G_5\|_{L^4(\mathbb{D})}+\sum_{j=1}^2\|m_j\|_{C^{1,\alpha}(\overline{E})}\bigg)\\\no
&\leq&C_*(\|\phi\|_{H^1(\mathbb{D})}+\|G_5\|_{C^{1,\alpha}(\overline{\mathbb{D}})}+\sum_{j=1}^2\|m_j\|_{C^{1,\alpha}(\overline{E})}).
\ee
Hence the term $b_0b_1 d_4(y_1) \phi(r_s,y')\in C^{\alpha_1}(\overline{\mathbb{D}})$ and the Schauder type estimate (cf. \cite[Theorem 4.6]{lie13}) would imply that
\be\no
\|\phi\|_{C^{1,\alpha}(\overline{\mathbb{D}})}\leq C_*(\|\phi\|_{H^1(\mathbb{D})}+\|G_5\|_{C^{1,\alpha}(\overline{\mathbb{D}})}+\sum_{j=1}^2\|m_j\|_{C^{\alpha}(\overline{E})}).
\ee
To further improve the regularity of $\phi$, we transform the function $\phi$ on $\mathbb{D}$ to a function $\Phi$ on $\Omega_+$ by the spherical projection coordinates defined in \eqref{ss}:
\be\no
\Phi(x)=\phi(\mathscr{S}(x)), \ \ \mathcal{G}_5(x)= G_5(\hat{{\bf W}},\hat{W}_6)(\mathscr{S}(x)), \ \ \forall x\in \Omega_+
\ee
then $\Phi(x)$ satisfies a uniformly elliptic equation with oblique boundary conditions:
\be\no\begin{cases}
\sum\limits_{i=1}^3 \p_{x_i}^2 \Phi-\frac{\bar{M}^2(|x|)}{|x|^2} \sum\limits_{i,j=1}^3 x_i x_j\p_{x_i x_j}^2 \Phi+\frac{d_2(|x|)}{|x|}\sum\limits_{j=1}^3 x_j\p_{x_j}\Phi\\
\quad\quad =b_0 b_1 d_4(|x|)f(x_1,x')+\mathcal{G}_5(x),\ \ \text{in }\Omega_+,\\
\bigg(\frac{1}{r_s}\sum\limits_{j=1}^3 x_j\p_{x_j}\Phi-b_4 \Phi\bigg)(\sqrt{r_s^2-|x'|^2},x')=\tilde{m}_s(\sqrt{r_s^2-|x'|^2},x'),\ \forall |x'|<r_s,\\
\p_{x_1}\Phi(0,x')=0,\ \forall r_s<|x'|<r_2,\\
\frac{1}{r_2}\sum\limits_{j=1}^3 x_j\p_{x_j}\Phi(\sqrt{r_2^2-|x'|^2},x')=\tilde{m}_2(\sqrt{r_2^2-|x'|^2},x'),\ \forall |x'|<r_2,
\end{cases}\ee
where
\be\no
f(x_1,x')=\phi\b(r_s,\frac{x_2}{|x|+x_1},\frac{x_3}{|x|+x_1}\b),\ \ \ \forall (x_1,x')\in \Omega_+
\ee
and
\be\no\begin{cases}
\tilde{m}_1(\sqrt{r_s^2-|x'|^2},x')=m_1\b(\frac{x_2}{r_s+\sqrt{r_s^2-|x'|^2}},\frac{x_3}{r_s+\sqrt{r_s^2-|x'|^2}}\b),\ \ &\forall |x'|<r_s,\\
\tilde{m}_2(\sqrt{r_2^2-|x'|^2},x')=m_2\b(\frac{x_2}{r_2+\sqrt{r_2^2-|x'|^2}},\frac{x_3}{r_2+\sqrt{r_2^2-|x'|^2}}\b),\ \ &\forall |x'|<r_2.
\end{cases}\ee
The Neumann boundary condition for $\phi$ on $\Sigma_w$ in \eqref{den41} and the compatibility condition \eqref{g5cp} imply that
\be\label{g5cp2}\begin{cases}
\p_{x_1}f(0,x')=-\frac{1}{|x'|}\sum\limits_{j=2}^3\frac{x_j}{|x'|}\p_{y_j}\phi(r_s,\frac{x'}{|x'|})=0\ \ \ &\forall r_s\leq |x'|\leq r_2,\\
\p_{x_1}\mathcal{G}_5(0,x')=0,\ \ &\forall r_s\leq |x'|\leq r_2.
\end{cases}\ee

We extend $\Phi, \mathcal{G}_5$ from $\Omega_+$ to $\Omega_e$ as follows:
\be\no
(\Phi_e,f_e,\mathcal{G}_{5e})(x)=\begin{cases}
(\Phi,f,\mathcal{G}_{5})(x_1,x'), &\forall x=(x_1,x')\in \Omega_+,\\
(\Phi,f,\mathcal{G}_{5})(-x_1,x'), &\forall x\in\{(x_1,x'): x_1<0, r_s<|x|<r_2\}
\end{cases}
\ee
and extend $\tilde{m}_j ,j=s,2$ from the half sphere $S_{r_j}^+=\{(x_1,x'): x_1>0, x_1^2+|x'|^2=r_j^2\}, j=s,2$ to $S_{r_j}=\{(x_1,x'):x_1^2+|x'|^2=r_j^2\}, j=s,2$:
\be\no
\tilde{m}_{je}(x_1,x')=\begin{cases}
\tilde{m}_{j}(\sqrt{r_j^2-|x'|^2},x'),\ \  &\forall x=(x_1,x')\in S_{r_j}^+,\\
\tilde{m}_{j}(-\sqrt{r_j^2-|x'|^2},x'),\ \ &\forall x=(x_1,x')\in S_{r_j}\setminus S_{r_j}^+.
\end{cases}\ee
Thanks to \eqref{g5cp2}, $\Phi_e, \mathcal{G}_{5e}\in C^{2,\alpha}(\Omega_e)\cap C^{1,\alpha}(\overline{\Omega_e})$ and $f_e\in C^{1,\alpha}(\overline{\Omega_e})$. Then $\Phi_e$ would satisfy
\be\no\begin{cases}
\sum_{i=1}^3 \p_{x_i}^2 \Phi_e-\frac{\bar{M}^2(|x|)}{|x|^2} \sum_{i,j=1}^3 x_i x_j\p_{x_i x_j}^2 \Phi_e+\frac{d_2(|x|)}{|x|}\sum_{j=1}^3 x_j\p_{x_j}\Phi_e\\
\quad\quad\quad=b_0 b_1 d_4(|x|)f_e(x_1,x')+\mathcal{G}_{5e}(x_1,x'),\ \text{in } \Omega_e,\\
\frac{1}{r_s}\sum_{j=1}^3 x_j\p_{x_j}\Phi(x_1,x')-b_4 \Phi(x_1,x')=\tilde{m}_{se}(x_1,x'),\ \text{on } x_1^2+|x'|^2=r_s^2,\\
\frac{1}{r_2}\sum_{j=1}^3 x_j\p_{x_j}\Phi(x_1,x')=\tilde{m}_{2e}(x_1,x'),\ \ \text{on } x_1^2+|x'|^2=r_2^2.
\end{cases}\ee

To show that $\tilde{m}_{je}\in C^{2,\alpha}(S_{r_j})$ for $j=s,2$, we use the stereographic projection $\mathscr{S}:(x_1,x')\in S_{r_j}\setminus\{(-r_j,0,0)\}\rightarrow (y_2,y_3)\in \mathbb{R}^2$
\be\no
(y_2,y_3):=\mathscr{S}(x_1,x')=\left(\frac{x_2}{r_j+x_1},\frac{x_3}{r_j+x_1}\right)
\ee
and pull $\tilde{m}_{je}$ back to $m_{je}$ on the $y'$ coordinates so that $\tilde{m}_{je}(x_1,x')=m_{je}(\mathscr{S}(x))$ for all $(x_1,x')\in S_{r_j}\setminus\{(-r_j,0,0)\}$. Simple calculations yield that
\be\no
m_{je}(y')=\begin{cases}
m_j(y'),\  &\forall |y'|\leq 1,\\
m_j(y'/|y'|^2),\ &\forall |y'|>1
\end{cases}
\ee
or in $(a,\tau)$ coordinates
\be\no
m_{je}(a,\tau)=\begin{cases}
m_j(a,\tau),\  &\forall a\in [0,1],\tau\in \mathbb{T}_{2\pi},\\
m_j(\frac1a,\tau),\ &\forall a\in (1,\infty),\tau\in \mathbb{T}_{2\pi}.
\end{cases}
\ee

Since $m_j\in C^{2,\alpha}(\overline{E}),j=s,2$ satisfy the compatibility conditions \eqref{g5cp}, then $m_{je}\in C^{2,\alpha}(\mathbb{R}^2)$ and thus $\tilde{m}_{je}\in C^{2,\alpha}(S_{r_j})$ with
\be\no
\|\tilde{m}_{je}\|_{C^{2,\alpha}(S_{r_j})}\leq C_*\|m_j\|_{C^{2,\alpha}(\overline{E})}, \ \ \ j=s,2.
\ee

Finally, a bootstrap argument and the standard Schauder estimate for the elliptic equation (See \cite[Chapter 6]{gt98}) implies that $\Phi_e\in C^{3,\alpha}(\overline{\Omega_e})$ and
\be\no
&&\|\phi\|_{C^{3,\alpha}(\overline{\mathbb{D}})}\leq \|\Phi\|_{C^{3,\alpha}(\overline{\Omega_+})}\leq \|\Phi_e\|_{C^{3,\alpha}(\overline{\Omega_e})}\\\no
&&\leq C_*(\|F\|_{C^{1,\alpha}(\overline{\Omega_e})}+\sum_{j=s,2}\|\tilde{m}_{je}\|_{C^{2,\alpha}(S_{r_j})})\\\no
&&\leq C_*(\|\Phi\|_{C^{1,\alpha}(\overline{S_{r_s}^+})}+\|G_5\|_{C^{1,\alpha}(\overline{\Omega_+})}+\sum_{j=s,2}\|\tilde{m}_{j}\|_{C^{2,\alpha}(S_{r_j})})\\\no
&&\leq C_*(\|G_5\|_{C^{1,\alpha}(\overline{\mathbb{D}})}+ \sum_{j=s,2}\|m_j\|_{C^{2,\alpha}(\overline{E})}).
\ee

Next we show the uniqueness of the $H^1(\mathbb{D})$ weak solution to \eqref{den41}, i.e. if $G_5\equiv 0, m_1=m_2=0$, and $\phi(y)\in H^1(\mathbb{D})$ is a weak solution to \eqref{den41}, then $\phi\equiv 0$ on $\mathbb{D}$.

Suppose $\phi(y)\in H^1(\mathbb{D})$, by \eqref{den412} one has $\phi\in C^{3,\alpha}(\overline{\mathbb{D}})$. Using Lemma \ref{eig}, $\phi(y_1,y')$ can be represented as
\be\no
\phi(y_1,y')= \displaystyle\sum_{n=1}^{\infty} X_n(y_1) Y_n(y'),
\ee
where $X_n(y_1)= \iint_{E} \phi(y_1,y') Y_n(y') dy'\in C^{2,\alpha}([r_s,r_2])$ solves the problem
\be\no\begin{cases}
d_1(y_1) X_n''(y_1) +(\frac{2}{y_1}+d_2(y_1)) X_n'(y_1)-\frac{\lambda_n}{4y_1^2}  X_n(y_1)-a_0 a_1 d_4(y_1)X_n(r_s)=0,\\
X_n'(r_s)-a_4 X_n(r_s)=0,\\
X_n'(r_2)=0.
\end{cases}\ee
Since $\lambda_n\geq 0$ for all $n\geq 1$, a simple argument using the maximum principle and Hopf's Lemma will imply that $X_n(y_1)\equiv 0$ for $\forall y_1\in [r_s,r_2]$ and thus $\phi(y)\equiv 0$ for all $y\in \mathbb{D}$.

Thus we have proved the uniqueness of $H^1$ weak solution to \eqref{den41}. This together with Lemma \ref{weak} and the Fredholm alternative for the elliptic equation, we obtain the existence and uniqueness of $H^1(\mathbb{D})$ weak solution to \eqref{den41} (See the argument in \cite[Theorem 8.6]{gt98}). With the aid of uniqueness, the estimate \eqref{den414} follows from \eqref{den412}. Hence the proof of the proposition is completed.

\end{proof}

Thus $M_1(y)=\p_{y_1}\phi(y)-\frac{1}{b_3} d_3(y_1)\p_{y_1}\phi(r_s,y'), M_j(y)=\frac{1+|y'|^2}{2y_1}\p_{y_j}\phi(y_1,y'), j=2,3$ would solve the problem \eqref{den36}. Furthermore, one can derive the following compatibility conditions as we have done in Lemma \ref{dc-1}:
\be\no\begin{cases}
M_a(y_1,1,\tau)=\p_a M_1(y_1,1,\tau)=\p_a M_{\tau}(y_1,1,\tau)=0,\ \ \ &\forall (y_1,\tau)\in [r_s,r_2]\times \mathbb{T}_{2\pi},\\
(\p_a^2 M_a+\p_a M_a)(y_1,1,\tau)=0,\ \ \ &\forall (y_1,\tau)\in [r_s,r_2]\times \mathbb{T}_{2\pi},
\end{cases}\ee
where $M_a=\cos\tau M_2+ \sin \tau M_3$ and $M_{\tau}=-\sin\tau M_2+ \cos\tau M_3$.

Then
\be\no
&&W_1(y_1,y')=\dot{W}_1(y)+M_1(y)=\dot{W}_1(y)+\p_{y_1}\phi(y)-\frac{1}{b_3} d_3(y_1)\p_{y_1}\phi(r_s,y'),\\\no
&&W_j(y_1,y')=\dot{W}_j(y_1,y')+M_j(y)=\dot{W}_j(y)+\frac{1+|y'|^2}{2y_1}\p_{y_j}\phi(y_1,y'),\ j=2,3,
\ee
will solve the problem \eqref{den32} and satisfy the estimate
\be\no
&&\sum_{j=1}^3\|W_j\|_{C^{2,\alpha}(\overline{\mathbb{D}})}\leq C_*(\sum_{j=1}^3\|\dot{W}_j\|_{C^{2,\alpha}(\overline{\mathbb{D}})}+\|\nabla \phi\|_{C^{2,\alpha}(\overline{\mathbb{D}})}+ \|\p_{y_1}\phi(r_s,y')\|_{C^{2,\alpha}(\overline{E})})\\\label{den46}
&&\leq C_*(\epsilon +C_*(\epsilon\|(\hat{{\bf W}}, \hat{W}_6)\|_{\Xi}+\|(\hat{{\bf W}}, \hat{W}_6)\|_{\Xi}^2)\leq C_*(\epsilon+\epsilon \delta_0 +\delta_0^2).
\ee
and the compatibility conditions
\be\no\begin{cases}
W_a(y_1,1,\tau)=\p_a W_1(y_1,1,\tau)=\p_a W_{\tau}(y_1,1,\tau)=0,\ \ \ &\forall (y_1,\tau)\in [r_s,r_2]\times \mathbb{T}_{2\pi},\\
(\p_a^2W_a+\p_a W_a)(y_1,1,\tau)=0,\ \ \ &\forall (y_1,\tau)\in [r_s,r_2]\times \mathbb{T}_{2\pi},
\end{cases}\ee
where $W_a=\cos\tau W_2+ \sin \tau W_3$ and $W_{\tau}=-\sin\tau W_2+ \cos\tau W_3$.

{\bf Step 5.} Once $W_1,W_2,W_3$ is obtained, the function $W_4$ is uniquely determined by \eqref{ent43}:
\be\no
W_4(y_1,y')=\frac{b_2}{b_1} W_1(r_s,y')+R_4(\hat{{\bf W}}(r_s,\beta_2(y),\beta_3(y)),\hat{W}_6(\beta_2(y),\beta_3(y))).
\ee
There also holds with the estimate
\be\no
&&\|W_4\|_{C^{2,\alpha}(\overline{\mathbb{D}})}\leq C_*(\|W_1(r_s,\cdot)\|_{C^{2,\alpha}(\overline{E})}+ C_*(\epsilon \|(\hat{{\bf W}}, \hat{W}_6)\|_{\Xi}+\|(\hat{{\bf W}}, \hat{W}_6)\|_{\Xi}^2)\\\label{ent54}
&&\leq C_*(\epsilon \delta_0+ \delta_0^2)
\ee
and the compatibility condition
\be\no
\p_a W_4(y_1,1,\tau)= \frac{b_2}{b_1}\p_a W_1(r_s,1,\tau)=0,\ \ \forall (y_1,\tau)\in\Sigma_w.
\ee

Finally, the shock front is uniquely determined by
\be\label{shock50}
W_6(y')=\frac{1}{b_1} W_1(r_s,y')-\frac{1}{b_1} R_1(\hat{{\bf W}}(r_s,y'),\hat{W}_6(y'))
\ee
which implies that $W_6\in C^{2,\alpha}(\overline{E})$. And the compatibility condition also holds
\be\no
\p_a W_6(1,\tau)=\frac{1}{b_1} \p_a W_1(r_s,1,\tau)-\frac{1}{b_1}\p_a\{R_1(\hat{{\bf W}}(r_s,\cdot),\hat{W}_6)\}(1,\tau)=0,\ \ \forall \tau\in \mathbb{T}_{2\pi}.
\ee
It remains to improve the regularity of $W_6$ to be $C^{3,\alpha}(\overline{E})$. To this end, define

\be\no
&&F_i(y'):= \p_{y_i} W_1(r_s,y')-\frac{2 b_0 r_s W_i(r_s,y')}{1+|y'|^2}-\frac{2b_1g_i(\hat{{\bf W}}(r_s,y'),\hat{W}_6(y'))}{1+|y'|^2}\\\no
&&\q\q\q\q-\p_{y_i} \{R_1(\hat{{\bf W}}(r_s,y'),\hat{W}_6(y'))\},\ \ i=2,3.
\ee
Then it follows from the first boundary condition in \eqref{den32}, the boundary data in \eqref{vor501} that
\be\no\begin{cases}
\p_{y_2} F_2 + \p_{y_3} F_3=0,\ \ &\text{in }E,\\
\p_{y_2} F_3-\p_{y_3} F_2=0,\ \ &\text{in }E,\\
y_2 F_2 + y_3 F_3 =0,\ \ \ &\text{on }\partial E.
\end{cases}\ee
Thus by Lemma \ref{equi0}, $F_2=F_3\equiv 0$ in $E$. Using the equation \eqref{shock50}, there holds
\be\no\begin{cases}
\p_{y_2} W_6(y')= \frac{2b_0r_s W_2(r_s,y')}{1+|y'|^2} + \frac{2g_2(\hat{{\bf W}}(r_s,y'),\hat{W}_6(y'))}{1+|y'|^2}, \ \ &\text{in }E,\\
\p_{y_3} W_6(y')= \frac{2b_0r_s W_3(r_s,y')}{1+|y'|^2} + \frac{2g_3(\hat{{\bf W}}(r_s,y'),\hat{W}_6(y'))}{1+|y'|^2}, \ \ &\text{in }E.
\end{cases}\ee
Therefore $W_6\in C^{3,\alpha}(\overline{E})$ with the estimate
\be\no
&&\|W_6\|_{C^{3,\alpha}(\overline{E})}\leq C_*\|W_1(r_s,\cdot)\|_{C^{2,\alpha}(\overline{E})}+ C_*\|R_1(\hat{{\bf W}}(r_s,y'),\hat{W}_6(y'))\|_{C^{2,\alpha}(\overline{E})})\\\label{shock54}
&&\quad + C_*\sum_{j=2}^3(\|W_j(r_s,\cdot)\|_{C^{2,\alpha}(\overline{E})}+\|g_j(\hat{{\bf W}}(r_s,y'),\hat{W}_6(y'))\|_{C^{2,\alpha}(\overline{E})})\\\no
&&\leq C_*(\epsilon + \epsilon  \|(\hat{{\bf W}}, \hat{W}_6)\|_{\Xi}+\|(\hat{{\bf W}}, \hat{W}_6)\|_{\Xi}^2)\leq C_*(\epsilon +\epsilon \delta_0+\delta_0^2).
\ee

Combining the estimates \eqref{ber43}, \eqref{den46}, \eqref{ent54} and \eqref{shock54}, one concludes that
\be\no
\|({\bf W}, W_6)\|_{\Xi}= \sum_{j=1}^5 \|W_j\|_{C^{2,\alpha}(\overline{\mathbb{D}})}+\|W_6\|_{C^{3,\alpha}(\overline{E})}\leq C_*(\epsilon +\epsilon \delta_0+\delta_0^2)\leq C_*(\epsilon +\delta_0^2).
\ee
Choose $\delta_0=\sqrt{\epsilon}$ and let $\epsilon<\epsilon_0=\frac{1}{4 C_*^2}$. Then $\|({\bf W}, W_6)\|_{\Xi}\leq 2C_*\epsilon\leq \delta_0$, thus $({\bf W}, W_6)\in \Xi$. We now can define the operator $\mathcal{T}:(\hat{{\bf W}}, \hat{W}_6)\mapsto ({\bf W}, W_6)$ which maps $\Xi$ to itself.

{\bf Step 6.} It remains to show that $\mathcal{T}$ is a contraction in the weak norm
\be\no
\|({\bf W}, W_6)\|_w:=\sum_{j=1}^5 \|W_j\|_{C^{1,\alpha}(\overline{\mathbb{D}})}+\|W_6\|_{C^{2,\alpha}(\overline{E})}.
\ee
This can be done by taking the difference for two solutions, we omit the details. Since $\delta_0=\sqrt{\epsilon}$, if $\epsilon<\epsilon_0=\frac{1}{16C_*^2}$, then $\|({\bf Z}, Z_6)\|_w\leq \frac{1}{2}\|(\hat{{\bf Z}}, \hat{Z}_6)\|_w$ so that the mapping $\mathcal{T}$ is a contraction operator in the weak norm $\|\cdot\|_{w}$. Thus there exists a unique point $({\bf W},W_6)\in \Xi$ such that $\mathcal{T}({\bf W},W_6)=({\bf W},W_6)$. Let us recall the auxiliary function $\Upsilon$ that is associated with the point $({\bf W},W_6)$ in solving the problem \eqref{den32}. We still need to prove that $\Upsilon\equiv 0$ in $\mathbb{D}$. Thanks to the definitions of $G_j({\bf W},W_6)$ for $j=1,2,3$, one may infer from \eqref{den32} that
\be\label{pi0}\begin{cases}
-\p_{y_1}\Upsilon=\frac{1+|y'|^2}{2 D_0^{W_6}}(D_2^{W_6} W_3- D_3^{W_6} W_2)+\frac{1}{D_0^{W_6}}(y_3 W_2-y_2 W_3)-\tilde{\omega}_1,\\
-\frac{1+|y'|^2}{2y_1}\p_{y_2}\Upsilon=\frac{1+|y'|^2}{2D_0^{W_6}} D_3^{W_6} W_1- D_1^{W_6} W_3-\frac{W_3}{D_0^{W_6}}-\frac{W_2\tilde{\omega}_1}{\bar{U}(D_0^{W_6})+W_1}\\
\quad-\frac{(1+|y'|^2)D_3^{W_6} W_5}{2D_0^{W_6}(\bar{U}(D_0^{W_6})+W_1)}+\frac{\bar{B}+W_5-\frac{1}{2}(\bar{U}(D_0^{W_6})+W_1)^2-\frac{1}{2}(W_2^2+W_3^2)}{\gamma(\bar{K}+W_4)}\frac{(1+|y'|^2)D_3^{W_6} W_4}{2D_0^{W_6}(\bar{U}(D_0^{W_6})+W_1)},\\
-\frac{1+|y'|^2}{2y_1}\p_{y_3}\Upsilon=D_1^{W_6} W_2+\frac{W_2}{D_0^{W_6}}-\frac{1+|y'|^2}{2D_0^{W_6}} D_2^{W_6} W_1-\frac{W_3\tilde{\omega}_1}{\bar{U}(D_0^{W_6})+W_1}\\
\quad\quad+\frac{(1+|y'|^2)D_2^{W_6} W_5}{2D_0^{W_6}(\bar{U}(D_0^{W_6})+W_1)}-\frac{\bar{B}+W_5-\frac{1}{2}(\bar{U}(D_0^{W_6})+W_1)^2-\frac{1}{2}(W_2^2+W_3^2)}{\gamma(\bar{K}+W_4)}\frac{(1+|y'|^2)D_2^{W_6} W_4}{2D_0^{W_6}(\bar{U}(D_0^{W_6})+W_1)}.
\end{cases}\ee
Since the vorticity $\tilde{\omega}_1$ satisfies the equation \eqref{vor400} and the following commutator relations hold
\be\no
D_1^{W_6}D_2^{W_6}=D_2^{W_6}D_1^{W_6},\ \ D_2^{W_6}D_3^{W_6}=D_3^{W_6}D_2^{W_6}, \ \ D_1^{W_6}D_3^{W_6}=D_3^{W_6}D_1^{W_6},
\ee
one can conclude from \eqref{pi0} that in $\mathbb{D}$
\be\no
D_1^{W_6}(\p_{y_1}\Upsilon)+\frac{1+|y'|^2}{4y_1 D_0^{W_6}}\sum_{j=2}^3 D_j^{W_6}((1+|y'|^2)\p_{y_j}\Upsilon)+\frac{2\p_{y_1}\Upsilon}{D_0^{W_6}}-\frac{1+|y'|^2}{2 y_1 D_0^{W_6}}\sum_{j=2}^3 y_j\p_{y_j}\Upsilon=0.
\ee
Since $\|W_6\|_{C^{3,\alpha}(\overline{E})}\leq \delta_0$, where $\delta_0$ is sufficiently small, thus $\Upsilon$ satisfies a second order uniformly elliptic equation without zeroth order term. Thanks to the homogeneous mixed boundary conditions for $\Upsilon$ on $\partial\mathbb{D}$ in \eqref{den32}, it follows directly from the maximum principle that $\Upsilon\equiv0$ in $\mathbb{D}$. Thus $({\bf W},W_6)$ is the desired solution. The proof of Theorem \ref{existence} is completed.

\section{Appendix}\label{appendix}

\subsection{The compatibility conditions in Lemma \ref{supersonic}} We first prove the compatibility conditions hold for the purely supersonic flows and they are also preserved when the flows move across the shock front.

\begin{proof}[Proof of Lemma \ref{supersonic}.] We only prove the compatibility conditions \eqref{comp2}. The slip boundary condition \eqref{slip} becomes
\be\label{slip-p}
U_a(z_1,1,\tau)=0,\ \ \ \forall(z_1,\tau)\in [r_1,r_2]\times \mathbb{T}_{2\pi}.
\ee
By \eqref{euler-p}, this implies that
\be\no
\p_{a} P^-(z_1,1,\tau)= 0,\ \ \forall (z_1,\tau)\in [r_1,r_2]\times \mathbb{T}_{2\pi}.
\ee

Differentiating the second, fourth and fifth equations in \eqref{euler-p} with respect to $a$, restricting the resulting equations on the surface $a=1$, one obtains
\be\no\begin{cases}
(U_1^-\p_{z_1}+\frac{1}{z_1}U_{\tau}^-\p_{\tau})(\p_a U_1^-) + (\p_{z_1} U_1^-+\frac{1}{z_1}\p_{a} U_a^-) \p_{a} U_1^-\\
\q\q + \frac{1}{z_1}(\p_{\tau} U_1^-- 2 U_{\tau}^-)\p_{a} U_{\tau}^- -\frac{\p_{z_1} P^-}{\rho^2}\frac{\p \rho^-}{\p K} \p_{a} K^-=0,\\
(U_1^-\p_{z_1}+\frac{1}{z_1}U_{\tau}^-\p_{\tau})(\p_{a} U_{\tau}^-) + (\p_{z_1}U_{\tau}^-+\frac{1}{z_1}U_{\tau}^-)\p_{a} U_1^-\\
\quad\quad+ \frac{1}{z_1}(\p_{a} U_a^-+ \p_{\tau} U_{\tau}^-+U_{1}^-)\p_{a} U_{\tau}^- -\frac{1}{z_1\rho^2}\p_{\tau} P^- \frac{\p \rho^-}{\p K}\p_{a} K^-=0,\\
(U_1^-\p_{z_1}+\frac{1}{z_1}U_{\tau}^-\p_{\tau})(\p_{a} K^-) + \p_{z_1} K^- \p_{a} U_1^- + \frac{1}{z_1}\p_{\tau} K^- \p_{a} U_{\tau}^- + \frac{1}{z_1}\p_{a}U_a^- \p_{a} K^-=0.
\end{cases}\ee
This is a homogeneous system of transport equations. It follows from \eqref{comp1} that
\be\no
(\p_{a} U_1^-,\p_{a} U_{\tau}^-, \p_{a} K^-)(z_1,1,\tau)=0,\ \ \forall (z_1,\tau)\in [r_1,r_2]\times \mathbb{T}_{2\pi}.
\ee
Differentiating the first equation in \eqref{euler-p} with respect to $a$, restricting the resulting equations on $a=1$, one gets
\be\no
\p_{a}^2 U_a^-(z_1,1,\tau)+\p_{a} U_a^-(z_1,1,\tau)=0,\ \ \forall (z_1,\tau)\in [r_1,r_2]\times \mathbb{T}_{2\pi}.
\ee

\end{proof}

We further prove that the compatibility conditions \eqref{comp2} is preserved when the supersonic flow moves across the shock. This is proved under the assumption that the existence of a transonic shock solution with $C^{2,\alpha}(\overline{\mathbb{D}_+})$ regularity in subsonic region and the $C^{3,\alpha}(\overline{E})$ regularity of the shock front.

\begin{lemma}({\bf Compatibility conditions on the intersection of the shock front and the wall.})\label{c11}
{\it Suppose the supersonic incoming flow satisfies the compatibility conditions \eqref{comp1}. Assume further that the system \eqref{euler-s},\eqref{rh} and \eqref{slip}-\eqref{pres} has a piecewise smooth solution $(U_1^{\pm},U_2^{\pm},U_3^{\pm},P^{\pm},K^{\pm})$ defined on $\mathcal{N}_{\pm}$ respectively and the shock front $z_1=\xi(z')$ with the properties $(U_1^+,U_2^+,U_3^+,P^+,K^+)\in C^{2,\alpha}(\overline{\mathcal{N}_+})$ and $\xi\in C^{3,\alpha}(\overline{E})$. Then the following compatibility conditions hold on the intersection of the shock front and the cylinder wall
\be\label{c1}\begin{cases}
(\p_{a}U_{1}^+,\p_a U_{\tau}^+, \p_a P^+,\p_a K^+)(z_1,1, \tau)=0,\ \forall \xi(1,\tau)\leq z_1\leq r_2, \tau \in \mathbb{T}_{2\pi},\\
U_a^+(z_1,1,\tau)=(\p_a^2 U_a^++\p_a U_a^+)(z_1,1,\tau)=0,\ \forall \xi(1,\tau)\leq z_1\leq r_2, \tau \in \mathbb{T}_{2\pi}.
\end{cases}\ee

}\end{lemma}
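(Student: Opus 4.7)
The plan is to argue in two stages: first establish the compatibility conditions on the intersection curve $\mathcal{C}_s:=\{(\xi(1,\tau),1,\tau):\tau\in\mathbb{T}_{2\pi}\}$ of the shock with the wall by differentiating the Rankine–Hugoniot conditions in $a$ at $a=1$; then propagate these along the wall $\{(z_1,1,\tau): \xi(1,\tau)\le z_1\le r_2\}$ using the system \eqref{euler-p} restricted to $a=1$, in close parallel with the proof of Lemma \ref{supersonic}.

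\textbf{Stage one.} The preliminary fact is $\partial_a\xi(1,\tau)=0$. Writing the third and fourth equations of \eqref{rh} in polar form (analogous to \eqref{shock12}) gives expressions of the form $\partial_a\xi=\frac{2\xi}{1+a^2}(b_0 r_s U_a^+(\xi,a,\tau)+g_a)$ and $\frac{1}{a}\partial_\tau\xi=\frac{2\xi}{1+a^2}(b_0 r_s U_\tau^+(\xi,a,\tau)+g_\tau)$. Restricting to $a=1$, the slip boundary $U_a^+(z_1,1,\tau)\equiv 0$ combined with the identity \eqref{j211} (the cross term $g_a$ vanishes on $a=1$) yields $\partial_a\xi(1,\tau)=0$. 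Next, differentiate each equation of the linearized Rankine–Hugoniot system \eqref{rh2} in $a$ and evaluate at $a=1$. Because $\partial_a\xi(1,\tau)=0$, $U_a^\pm(z_1,1,\tau)\equiv0$, and the supersonic compatibility \eqref{comp2} makes every supersonic $\partial_a$ term vanish, the resulting linear algebraic system for $(\partial_a U_1^+,\partial_a\dot\rho,\partial_a K^+)(\xi(1,\tau),1,\tau)$ is homogeneous with invertible coefficient matrix (the same matrix that defined $b_1,b_2$ in \eqref{shock13}). Hence these traces vanish on $\mathcal{C}_s$. The tangential component of the Rankine–Hugoniot conditions, differentiated in $a$, similarly forces $\partial_a U_\tau^+(\xi(1,\tau),1,\tau)=0$, and the state relation \eqref{denw2} gives $\partial_a P^+(\xi(1,\tau),1,\tau)=0$.

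\textbf{Stage two.} Restrict the second, fourth, and fifth equations in \eqref{euler-p} to $a=1$ and differentiate each in $a$. Using $U_a^+(z_1,1,\tau)\equiv0$ to eliminate the singular coefficients and the identity $\frac{d}{da}(a+\frac{1}{a})|_{a=1}=0$ to kill the remaining lower-order forcing (exactly as in the proof of Lemma \ref{supersonic}), one obtains a first-order linear transport system along the wall $\{a=1\}$ for $(\partial_a U_1^+,\partial_a U_\tau^+,\partial_a K^+)$ with characteristic field $U_1^+\partial_{z_1}+\frac{1}{z_1}U_\tau^+\partial_\tau$. By Stage one the inflow data on $\mathcal{C}_s$ is zero, so ODE uniqueness yields vanishing on the entire subsonic wall $\xi(1,\tau)\le z_1\le r_2$. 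The $a$-momentum equation restricted to $a=1$ then gives $\partial_a P^+(z_1,1,\tau)=0$, and finally differentiating the continuity equation in $a$ at $a=1$ produces $(\partial_a^2 U_a^++\partial_a U_a^+)(z_1,1,\tau)=0$.

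The main obstacle is Stage one: verifying $\partial_a\xi(1,\tau)=0$ and that the $a$-differentiated Rankine–Hugoniot system becomes homogeneous on $\mathcal{C}_s$. Both hinge on the compatibility identity \eqref{j211} for the jump quantities $g_a, J, R_{0j}$ (whose direct proof from the explicit polar formulas is relegated to the appendix); the structural reason is that the polar incoming data inherit an even/odd symmetry in $a$ across $a=1$ from \eqref{comp2}, and the Rankine–Hugoniot map respects this symmetry. Once these algebraic identities are granted, Stage two is a direct transcription of the supersonic argument.
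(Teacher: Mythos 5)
Your proposal is correct and follows essentially the same route as the paper: first show $\p_a\xi(1,\tau)=0$ from the normal Rankine--Hugoniot component together with the slip condition, then differentiate the Rankine--Hugoniot conditions in $a$ at $a=1$ to obtain a homogeneous, invertible linear system for the subsonic traces $(\p_a U_1^+,\p_a U_\tau^+,\p_a\rho^+)$ on the intersection circle, and finally propagate along the wall exactly as in Lemma \ref{supersonic}. The only caution is your appeal to \eqref{j211}/\eqref{rv41} as established facts -- in the paper those identities are verified for iterates already assumed to satisfy \eqref{cp100}, so here they must instead be read as saying the $a$-differentiated jump relations are \emph{homogeneous} in the unknown traces (which is what your "invertible homogeneous system" phrasing correctly captures), not as prior vanishing statements.
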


\begin{proof}

Assume that the shock front is represented as $z_1=\xi(a,\tau)$, then it follows from \eqref{rh} that
\be\lab{rh-p}\begin{cases}
[\rho U_1] -\frac{1+a^2}{2\xi(a,\tau)}(\p_{a}\xi [\rho U_a] + \frac{1}{a}\p_{\tau}\xi [\rho U_{\tau}])=0,\\
[\rho U_1^2+ P]-\frac{1+a^2}{2\xi(a,\tau)}(\p_{a}\xi [\rho U_1 U_a] +\frac{1}{a}\p_{\tau}\xi[\rho U_1 U_3])=0,\\
[\rho U_1 U_a] -\frac{1+a^2}{2\xi(a,\tau)}(\p_{a}\xi [\rho U_a^2+P] +\frac{1}{a}\p_{\tau}\xi[\rho U_a U_{\tau}])=0,\\
[\rho U_1 U_{\tau}]-\frac{1+a^2}{2\xi(a,\tau)}(\p_{a}\xi [\rho U_a U_{\tau}] +\frac{1}{a}\p_{\tau}\xi[\rho U_{\tau}^2+P])=0,\\
[B]=0.
\end{cases}\ee

It suffices to show that \eqref{c1} holds on the intersection of the shock front and the nozzle wall, the rest can be done as for Lemma \ref{supersonic}.  It follows from the third equation in \eqref{rh-p} and \eqref{slip-p} that
\be\label{perpen}
\p_{a}\xi(1, \tau)=0,\ \ \forall \tau \in \mathbb{T}_{2\pi}.
\ee
Substitute \eqref{perpen} into the last equation in \eqref{rh} to get
\be\label{shock-cyl1}
\frac{1}{\xi(1,\tau)}\p_{\tau}\xi(1,\tau) =\frac{[\rho U_1 U_{\tau}]}{[P+\rho U_{\tau}^2]},\ \ \forall \tau\in \mathbb{T}_{2\pi}.
\ee

Differentiating the Rankine-Hugoniot jump conditions in \eqref{rh-p} with respect to $a$ and restricting the resulting equations on $a=1$, utilizing \eqref{slip-p}, \eqref{perpen} and \eqref{shock-cyl1}, then at points $(\xi(1,\tau),1,\tau)$ one obtains
\be\no\begin{cases}
\p_{a}(\rho^+ U_1^+)-\frac{[\rho U_1 U_{\tau}]}{[P+\rho U_{\tau}^2]} \p_{a}(\rho^+ U_{\tau}^+)=0,\\
\rho^+ U_1^+\p_{\theta} U_1^+-\frac{[\rho U_1 U_{\tau}]}{[P+\rho U_{\tau}^2]} \rho^+ U_{\tau}^+ \p_{a} U_1^+=0,\\
\rho^+ U_1^+\p_{\theta} U_{\tau}^+-\frac{[\rho U_1 U_{\tau}]}{[P+\rho U_{\tau}^2]}\rho^+ U_{\tau}^+\p_{\theta} U_{\tau}^+=0.
\end{cases}\ee

Then
\be\no
(\p_{a} U_1^+,\p_{a} U_{\tau}^+,\p_{a}\rho^+)(\xi(1,\tau),1, \tau)=0,\ \ \ \forall \tau\in \mathbb{T}_{2\pi}.
\ee
The proof of Lemma \eqref{c11} is completed.

\end{proof}

\subsection{The verification of the compatibility conditions in \S\ref{proof}}

We give the explicit expressions of $J_i({\bf W}(r_s,y'), W_6) (i=2,3),J({\bf W}(r_s,y'), W_6)$ and $R_{0i}, i=1,2,3$ needed in \eqref{g21}-\eqref{shock400} and \eqref{ent31}:
\be\no
&& J_2({\bf W}(r_s,y'), W_6)\\\no
&&=\big(\tilde{\rho}({\bf W}(r_s,y'), W_6)W_3^2(r_s,y')+\tilde{P}({\bf W}(r_s,y'), W_6)-(\rho^-(U_3^-)^2+P^-)(r_s+W_6,y')\big)\\\no
&&\times \big(\tilde{\rho}({\bf W}(r_s,y'), W_6)(\bar{U}(r_s+W_6)+W_1(r_s,y'))W_2(r_s,y')-(\rho^- U_1^- U_2^-)(r_s+W_6,y')\big)\\\no
&&-\big(\tilde{\rho}({\bf W}(r_s,y'), W_6)(\bar{U}(r_s+W_6)+W_1(r_s,y'))W_3(r_s,y')-(\rho^- U_1^- U_3^-)(r_s+W_6,y')\big)\\\no
&&\quad\times \big(\tilde{\rho}({\bf W}(r_s,y'), W_6)(W_2W_3)(r_s,y')-(\rho^-U_2^-U_3^-)(r_s+W_6,y')\big),\\\no
&& J_3({\bf W}(r_s,y'), W_6)\\\no
&&=\big(\tilde{\rho}({\bf W}(r_s,y'), W_6)W_2^2(r_s,y')+\tilde{P}({\bf W}(r_s,y'), W_6)-(\rho^-(U_2^-)^2+P^-)(r_s+W_6,y')\big)\\\no
&&\times\big(\tilde{\rho}({\bf W}(r_s,y'), W_6)(\bar{U}(r_s+W_6)+W_1(r_s,y'))W_3(r_s,y')-(\rho^- U_1^- U_3^-)(r_s+W_6,y')\big)\\\no
&&-\big(\tilde{\rho}({\bf W}(r_s,y'), W_6)(\bar{U}(r_s+W_6)+W_1(r_s,y'))W_2(r_s,y')-(\rho^- U_1^- U_2^-)(r_s+W_6,y')\big)\\\no
&&\times \big(\tilde{\rho}({\bf W}(r_s,y'), W_6)(W_2W_3)(r_s,y')-(\rho^-U_2^-U_3^-)(r_s+W_6,y')\big),
\ee
and
\be\no
&& J({\bf W}(r_s,y'), W_6)\\\no
&&=\big(\tilde{\rho}({\bf W}(r_s,y'), W_6)W_2^2(r_s,y')+\tilde{P}({\bf W}(r_s,y'), W_6)-(\rho^-(U_2^-)^2+P^-)(r_s+W_6,y')\big)\\\no
&&\times\big(\tilde{\rho}({\bf W}(r_s,y'), W_6)W_3^2(r_s,y')+\tilde{P}({\bf W}(r_s,y'), W_6)-(\rho^-(U_3^-)^2+P^-)(r_s+W_6,y')\big)\\\label{j10}
&&-\big(\tilde{\rho}({\bf W}(r_s,y'), W_6)(W_2W_3)(r_s,y')-(\rho^- U_2^- U_3^-)(r_s+W_6,y')\big)^2,
\ee
and
\be\no
&&R_{01}({\bf W}(r_s,\cdot), W_6)=-[\bar{\rho} \bar{U}](r_s+W_6)+(\rho^- U_1^-)(r_s+W_6,\cdot)\\\no
&&\quad-(\bar{\rho}^-\bar{U}^-)(r_s+W_6)-(\bar{\rho}^+(r_s+W_6)-\bar{\rho}^+(r_s))W_1(r_s,\cdot)\\\no
&&\q-(W_1(r_s,y')+\bar{U}^+(r_s+W_6)-\bar{U}^+(r_s))(\tilde{\rho}({\bf W}(r_s,\cdot),W_6)-\bar{\rho}^+(r_s+W_6))\\\no
&&\quad+ \sum_{i=2}^3(\rho({\bf W}(r_s,\cdot),W_6)W_{i}(r_s,\cdot)-(\rho^- U_i^-)(r_s+W_6,y'))\frac{J_i({\bf W}(r_s,\cdot),W_6)}{J({\bf W}(r_s,\cdot),W_6)},
\ee
\be\no
&&R_{02}({\bf W}(r_s,\cdot), W_6)\\\no
&&=-\big\{[\bar{\rho} \bar{U}^2+ \bar{P}](r_s +W_6)-\frac{2}{r_s}[\bar{P}(r_s)] W_6\big\}+ (\rho^- (U_1^-)^2 +P^-)(r_s+W_6,y')\\\no
&&-(\bar{\rho}^-(\bar{U}^-)^2+\bar{P}^-)(r_s+W_6)-\bigg\{\tilde{\rho}({\bf W}(r_s,y'),W_6)(\bar{U}(r_s+W_6)+W_1(r_s,y'))^2 \\\no
&&+\tilde{P}({\bf W}(r_s,\cdot),W_6)-(\bar{\rho}^+(\bar{U}^+)^2+\bar{P}^+)(r_s+W_6)-2(\bar{\rho}^+ \bar{U}^+)(r_s)W_1(r_s,\cdot)\\\no
&&- \{(\bar{U}^+(r_s))^2+c^2(\bar{\rho}^+(r_s),\bar{K}^+)\}(\tilde{\rho}({\bf W}(r_s,\cdot),W_6)-\bar{\rho}^+(r_s+W_6))-(\bar{\rho}^+(r_s))^{\gamma} W_4(r_s,\cdot)\bigg\}
\\\no
&&+\sum_{i=2}^3\big(\tilde{\rho}({\bf W}(r_s,\cdot),W_6)(\bar{U}(r_s+W_6)+W_1(r_s,\cdot))W_i(r_s,\cdot)\\\no
&&-(\rho^- U_1^- U_i^-)(r_s+W_6,\cdot)\big)\frac{J_i({\bf W}(r_s,\cdot), W_6)}{J({\bf W}(r_s,\cdot), W_6)},\\\no
&&R_{03}({\bf W}(r_s,\cdot), W_6)= B^-(r_s+ W_6,\cdot)- \bar{B}^-- \bar{U}^+(r_s+W_6) W_1(r_s,\cdot)-\frac{1}{2}\sum_{j=1}^3 W_j^2(r_s,\cdot)\\\no
&&\quad -\frac{\gamma}{\gamma-1}\big((\bar{K}^+ +W_4(r_s,\cdot))(\tilde{\rho}({\bf W}(r_s,\cdot), W_6))^{\gamma-1}-\bar{K}^+ (\bar{\rho}^+(r_s+W_6))^{\gamma-1}\big)\\\no
&&\quad + \bar{U}^+(r_s) W_1(r_s,\cdot)+ \frac{c^2(\bar{\rho}^+(r_s),\bar{K}^+)}{\bar{\rho}^+(r_s)} (\tilde{\rho}({\bf W}(r_s,\cdot), W_6)-\bar{\rho}^+(r_s+W_6))\\\no
&&\quad+ \frac{\gamma (\bar{\rho}^+(r_s))^{\gamma-1}}{(\gamma-1)} W_4(r_s,\cdot).
\ee

Now we start to verify the compatibility conditions needed in \S\ref{proof}. Before we do it, we make some preparations.  Since
\be\no
\p_{y_2}=\cos\tau\p_a -\frac{1}{a}\sin\tau \p_{\tau},\ \ \p_{y_3}=\sin\tau\p_a +\frac{1}{a}\cos\tau \p_{\tau},
\ee
then there holds
\be\label{ad1}
&&\frac{1+|y'|^2}{2}(\p_{y_2}\hat{W}_3-\p_{y_3}\hat{W}_2)+(y_3\hat{W}_2-y_2\hat{W}_3)\\\no
&&\q\q=\frac{1+a^2}{2}(\p_a \hat{W}_{\tau}-\frac1a\p_{\tau} \hat{W}_{a})+\frac{(1-a^2)\hat{W}_{\tau}}{2a},\\\label{ad2}
&&\p_{y_2}\hat{W}_6\p_{y_1}\hat{W}_3-\p_{y_3}\hat{W}_6\p_{y_1} \hat{W}_2=\p_a \hat{W}_6 \p_{y_1}\hat{W}_{\tau}-\frac{1}{a}\p_{\tau}\hat{W}_6 \p_{y_1}\hat{W}_a,\\\label{ad3}
&&\p_{y_2}\hat{W}_i\p_{y_3}\hat{W}_j-\p_{y_3}\hat{W}_i\p_{y_2}\hat{W}_j=\frac1a \p_a \hat{W}_i\p_{\tau}\hat{W}_{j} -\frac1a \p_{\tau} \hat{W}_i\p_{a} \hat{W}_j,\ \\\label{ad4}
&&\frac1a\sum_{i=2}^3y_i D_i^{\hat{W}_6} \hat{W}_j=\p_a\hat{W}_j-\frac{r_2-y_1}{r_2-r_s-\hat{W}_6}\p_{y_1}\hat{W}_j \p_a \hat{W}_6, j=1,4,5.
\ee

\framebox{{\bf The verification of \eqref{j211}, \eqref{rv11} and \eqref{rv41}.}} Some computations yield that
\be\no
&&J_a(r_s,a,\tau)=\cos\tau J_2 + \sin\tau J_3\\\no
&&=(\tilde{P}(\hat{{\bf W}}(r_s,\cdot),\hat{W}_6)-P^-(r_s+\hat{W}_6,\cdot))\bigg(\tilde{\rho}(\hat{{\bf W}}(r_s,\cdot), \hat{W}_6)(\bar{U}(r_s+\hat{W}_6)+\hat{W}_1(r_s,\cdot))\hat{W}_a(r_s,\cdot)\\\no
&&\quad-(\rho^- U_1^- U_a^-)(r_s+\hat{W}_6,\cdot)\bigg)+\tilde{\rho}(\hat{{\bf W}}(r_s,\cdot), \hat{W}_6)\rho^-(r_s+\hat{W}_6,\cdot)\bigg(U_1^-(r_s+\hat{W}_6,\cdot)\hat{W}_{\tau}(r_s,\cdot)\\\no
&&\quad+U_{\tau}^-(r_s+\hat{W}_6,\cdot)\hat{W}_{1}(r_s,\cdot)\bigg)\times \underline{(U_3^-(r_s+\hat{W}_6,\cdot)\hat{W}_{2}(r_s,\cdot)-U_2^-(r_s+\hat{W}_6,\cdot)\hat{W}_{3}(r_s,\cdot))},
\ee
\be\no
&&J_{\tau}(r_s,a,\tau)=-\sin\tau J_2 + \cos\tau J_3\\\no
&&=(\tilde{P}(\hat{{\bf W}}(r_s,\cdot),\hat{W}_6)-P^-(r_s+\hat{W}_6,\cdot))\bigg(\tilde{\rho}(\hat{{\bf W}}(r_s,\cdot), \hat{W}_6)(\bar{U}(r_s+\hat{W}_6)+\hat{W}_1(r_s,\cdot))\hat{W}_{\tau}(r_s,\cdot)\\\no
&&\quad-(\rho^- U_1^- U_{\tau}^-)(r_s+\hat{W}_6,\cdot)\bigg)+\tilde{\rho}(\hat{{\bf W}}(r_s,\cdot), \hat{W}_6)\rho^-(r_s+\hat{W}_6,\cdot)\bigg(U_{a}^-(r_s+\hat{W}_6,\cdot)\hat{W}_{1}(r_s,\cdot)\\\no
&&\quad-U_1^-(r_s+\hat{W}_6,\cdot)\hat{W}_{a}(r_s,\cdot)\bigg)\times \underline{(U_3^-(r_s+\hat{W}_6,\cdot)\hat{W}_{2}(r_s,\cdot)-U_2^-(r_s+\hat{W}_6,\cdot)\hat{W}_{3}(r_s,\cdot))}.
\ee
Since $\hat{W}_a(r_s,1,\tau)=0$ for all $\tau\in \mathbb{T}_{2\pi}$ and $U^-_a(y_1,1,\tau)=0$ for all $(y_1,\tau)\in [r_1,r_2]\times \mathbb{T}_{2\pi}$, then
\be\no
&&(\hat{W}_2,\hat{W}_3)(r_s,1,\tau)=\hat{W}_{\tau}(r_s,1,\tau)(-\sin\tau,\cos\tau),\\\no &&(U_2^-,U_3^-)(r_s+\hat{W}_6(1,\tau),1,\tau)=U_{\tau}^-(r_s+\hat{W}_6(1,\tau),1,\tau)(-\sin\tau,\cos\tau).
\ee
Thus both the above underlined terms vanish at $a=1$. Note that
\be\no
&&\p_a(\tilde{P}(\hat{{\bf W}},\hat{W}_6))(r_s,1,\tau)=\p_a(\tilde{\rho}(\hat{{\bf W}}))(r_s,1,\tau)=0,\ \ \tau\in \mathbb{T}_{2\pi},\\\no
&&(\cos\tau g_2+\sin\tau g_{3})(r_s,1,\tau)=\b[(r_s+W_6)\frac{J_a}{J}-b_0r_s W_a\b](r_s,1,\tau)=0,\ \ \tau\in \mathbb{T}_{2\pi},
\ee
then the first and second equations in \eqref{j211} follow immediately. Applying $\p_a$ on \eqref{j10} and evaluating at $a=1$ yields
\be\no
&&\p_a J(r_s,1,\tau)=2\tilde{\rho}(\hat{{\bf W}}(r_s,\cdot), \hat{W}_6)\hat{W}_2\p_a \hat{W}_2(\tilde{\rho}\hat{W}_3^2+\tilde{P}-(\rho^- (U_3^-)^2+P^-)(r_s+\hat{W}_6,y'))\\\no
&&\quad+ 2\tilde{\rho}(\hat{{\bf W}}(r_s,\cdot), \hat{W}_6)\hat{W}_3\p_a \hat{W}_3(\tilde{\rho}\hat{W}_2^2+\tilde{P}-(\rho^- (U_2^-)^2+P^-)(r_s+\hat{W}_6,y'))\\\no
&&\quad-2(\tilde{\rho}\hat{W}_2\hat{W}_3-(\rho^- U_2^- U_3^-)(r_s+\hat{W}_6,y'))\tilde{\rho}(\hat{W}_3\p_a\hat{W}_2+\hat{W}_2\p_a\hat{W}_3)|_{a=1}\\\no
&&=\tilde{\rho}(\hat{{\bf W}}(r_s,\cdot), \hat{W}_6)(\tilde{P}(\hat{{\bf W}}(r_s,\cdot),\hat{W}_6)-P^-(r_s+\hat{W}_6,\cdot))\p_a(\hat{W}_a^2+\hat{W}_{\tau}^2)\\\no
&&\quad+2\tilde{\rho}(\hat{{\bf W}}(r_s,\cdot), \hat{W}_6)\underline{(U_3^-(r_s+\hat{W}_6,\cdot)\hat{W}_{2}(r_s,\cdot)-U_2^-(r_s+\hat{W}_6,\cdot)\hat{W}_{3}(r_s,\cdot))}\\\no
&&\quad\times \rho^-(r_s+\hat{W}_6,\cdot)(U_2^-(r_s+\hat{W}_6,\cdot)\p_a \hat{W}_3(r_s,\cdot)-U_3^-(r_s+\hat{W}_6,\cdot)\p_a \hat{W}_2(r_s,\cdot)) \bigg|_{a=1}\\\no
&&=0,\ \ \forall \tau\in \mathbb{T}_{2\pi}.
\ee
It remains to prove \eqref{rv11} and \eqref{rv41}. Here we only prove that
\be\label{rv110}
\p_a \{R_{01}(\hat{{\bf W}}(r_s,\cdot), \hat{W}_6)\}(1,\tau)=0,\ \ \forall \tau\in \mathbb{T}_{2\pi}.
\ee
The other cases can be proved similarly. We rewrite $R_{01}$ as
\be\no
&&R_{01}(\hat{{\bf W}}(r_s,\cdot), \hat{W}_6)=(\rho^- U_1^-)(r_s+\hat{W}_6,\cdot)-(\bar{\rho}^-\bar{U}^-)(r_s+\hat{W}_6)\\\no
&&-(\bar{\rho}^+(r_s+\hat{W}_6)-\bar{\rho}^+(r_s))\hat{W}_1(r_s,\cdot)-[\bar{\rho} \bar{U}](r_s+\hat{W}_6)\\\no
&&\quad-(\hat{W}_1(r_s,y')+\bar{U}^+(r_s+\hat{W}_6)-\bar{U}^+(r_s))(\tilde{\rho}(\hat{{\bf W}}(r_s,\cdot),\hat{W}_6)-\bar{\rho}^+(r_s+\hat{W}_6))\\\no
&&\quad+ \sum_{i=a,\tau}(\tilde{\rho}(\hat{{\bf W}}(r_s,\cdot),\hat{W}_6)\hat{W}_{i}(r_s,\cdot)-(\rho^- U_i^-)(r_s+\hat{W}_6,y'))\frac{J_i(\hat{{\bf W}}(r_s,\cdot),\hat{W}_6)}{J(\hat{{\bf W}}(r_s,\cdot),\hat{W}_6)}.
\ee
Then \eqref{rv110} will follow from \eqref{comp2},\eqref{cp100} and \eqref{j211}.

\framebox{{\bf The verification of \eqref{om1} and \eqref{om2}.}} By \eqref{vor402}, we see that
\be\no
&&g_4(\hat{{\bf W}}(r_s,\cdot),\hat{W}_6)=\frac{-\hat{W}_6}{r_s(r_s+\hat{W}_6)}(\frac{1+a^2}{2}(\p_{y_2}\hat{W}_3-\p_{y_3}\hat{W}_2)+y_3\hat{W}_2-y_2\hat{W}_3)\\\no
&&\quad-\frac{1+a^2}{2}\frac{r_2-r_s}{(r_s+\hat{W}_6)(r_2-r_s-\hat{W}_6)}(\p_{y_2}\hat{W}_6\p_{y_1}\hat{W}_3-\p_{y_3}\hat{W}_6\p_{y_1}\hat{W}_2),
\ee
then \eqref{cp100}, \eqref{ad1} and \eqref{ad2} imply that
\be\no
g_4(\hat{{\bf W}}(r_s,\cdot),\hat{W}_6)(r_s,1,\tau)=0,\ \ \forall \tau\in \mathbb{T}_{2\pi}.
\ee
The equation \eqref{om1} immediately follows from the following calculations:
\be\no
&&(1+|y'|^2)\left\{\p_{y_3}\left(\frac{g_2}{1+|y'|^2}\right)-\p_{y_2}\left(\frac{g_3}{1+|y'|^2}\right)\right\}(y_1,a,\tau)\bigg|_{a=1}\\\no
&&=\frac{1}{J}(J_2\p_{y_3}\hat{W}_6-J_3 \p_{y_2}\hat{W}_6)+\frac{r_s+\hat{W}_6}{J}(\p_{y_3} J_2-\p_{y_2} J_3-(y_3 J_2-y_2 J_3))\bigg|_{a=1}\\\no
&&\quad-\frac{r_s+W_6}{J^2}(J_2\p_{y_3} J-J_3 \p_{y_2} J)- b_0 r_s(\p_{y_3} \hat{W}_2-\p_{y_2} \hat{W}_3-(y_3 \hat{W}_2-y_2 \hat{W}_3))\bigg|_{a=1}\\\no
&&=\frac{1}{J}(-J_{\tau}\p_a \hat{W}_6+ J_a \p_{\tau} \hat{W}_6)+\frac{r_s+\hat{W}_6}{J}(-\p_a J_{\tau} + \frac1a \p_{\tau} J_a+(a-\frac1a) J_{\tau})\bigg|_{a=1}\\\no
&&\quad-\frac{r_s+\hat{W}_6}{J^2}(-J_{\tau}\p_a J+\frac1a J_a\p_{\tau} J)-b_0r_s(-\p_a \hat{W}_{\tau} + \frac1a \p_{\tau} \hat{W}_a+(a-\frac1a) \hat{W}_{\tau})\bigg|_{a=1}\\\no
&&=0.
\ee
To verify \eqref{om2}, we need to show that at $a=1$ there holds
\be\no
\bigg(D_2^{\hat{W}_6}\hat{W}_i D_3^{\hat{W}_6}\hat{W}_j- D_3^{\hat{W}_6}\hat{W}_i D_2^{\hat{W}_6}\hat{W}_j\bigg)(y_1,1,\tau)=0,\ \ \forall  i,j=1,4,5,6,a,\tau.
\ee
We compute the case $i=1,j=5$, the other cases are similar. Using \eqref{ad3}, evaluating at $a=1$ one has
\be\no
&&D_2^{\hat{W}_6}\hat{W}_1D_3^{\hat{W}_6}\hat{W}_5- D_3^{\hat{W}_6}\hat{W}_1 D_2^{\hat{W}_6}\hat{W}_5\bigg|_{a=1}=\p_{y_2}\hat{W}_1\p_{y_3}\hat{W}_5-\p_{y_3}\hat{W}_1\p_{y_2} \hat{W}_5\bigg|_{a=1} \\\no
&&\quad+\frac{y_1-r_2}{r_2-r_s-\hat{W}_6}\bigg(\p_{y_1}\hat{W}_5(\p_{y_2}\hat{W}_1\p_{y_3}\hat{W}_6-\p_{y_3}\hat{W}_1\p_{y_2}\hat{W}_6)
\\\no
&&+\p_{y_1}\hat{W}_1(\p_{y_2}\hat{W}_6\p_{y_3}\hat{W}_5-\p_{y_3}\hat{W}_6\p_{y_2}\hat{W}_5)\bigg)\bigg|_{a=1}\\\no
&&=\frac1a (\p_a\hat{W}_1\p_{\tau}\hat{W}_5- \p_a\hat{W}_5\p_{\tau}\hat{W}_1)\bigg|_{a=1}+\p_{y_1} \hat{W}_1(\p_a\hat{W}_6\p_{\tau}\hat{W}_5-\p_a\hat{W}_5\p_{\tau}\hat{W}_6)\bigg|_{a=1}\\\no
&&\quad+\frac{y_1-r_2}{a(r_2-r_s-\hat{W}_6)}\p_{y_1} \hat{W}_5(\p_a\hat{W}_1\p_{\tau}\hat{W}_6-\p_a\hat{W}_6\p_{\tau}\hat{W}_1)\bigg|_{a=1}=0.
\ee

\framebox{{\bf The verification of \eqref{g123}.}} Since
\be\no
&&H_1(\hat{{\bf W}},\hat{W}_6)=\bigg(\frac1{y_1}-\frac1{D_0^{\hat{W}_6}}\bigg)(\frac{1+a^2}{2}(\p_{y_2}\hat{W}_3-\p_{y_3}\hat{W}_2)+y_3\hat{W}_2-y_2\hat{W}_3)\\\no
&&\quad\quad -\frac{1+a^2}{2D_0^{\hat{W}_6}}\frac{r_2-y_1}{r_2-r_s-\hat{W}_6}(\p_{y_2}\hat{W}_6\p_{y_1}\hat{W}_3-\p_{y_3}\hat{W}_6\p_{y_1} \hat{W}_2),
\ee
then it follows from \eqref{om3},\eqref{ad1} and \eqref{ad2} that
\be\no
G_1(\hat{{\bf W}},\hat{W}_6)(y_1,1,\tau)=H_1(\hat{{\bf W}},\hat{W}_6)(y_1,1,\tau)=0,\ \ \forall (y_1,\tau)\in [r_s,r_2]\times \mathbb{T}_{2\pi}.
\ee
Furthermore, there holds
\be\no
&&G_{\tau}=\frac{\tilde{\omega}_1 \hat{W}_{\tau}}{\bar{U}(D_0^{\hat{W}_6})+\hat{W}_1}- \frac{1+a^2}{ D_0^{\hat{W}_6}(\bar{U}(D_0^{\hat{W}_6})+\hat{W}_1)}(\p_a\hat{W}_5+\frac{(y_1-r_2)\p_{y_1}\hat{W}_5\p_a \hat{W}_6}{r_2-r_s-\hat{W}_6})\\\no
&&\quad \quad-\sin\tau H_2+\cos\tau H_3-\frac{\bar{B}-\frac12 \bar{U}^2(y_1)}{\gamma \bar{K} \bar{U}(y_1)} \frac{(1+a^2)}{2y_1}\p_a \{R_4(\hat{{\bf W}},\hat{W}_6)\},\\\no
&&H_{\tau}=-\sin\tau H_2+\cos\tau H_3=-\frac{\hat{W}_6\p_{y_1}\hat{W}_a}{r_2-r_s-\hat{W}_6}- (\frac{1}{D_0^{\hat{W}_6}}-\frac1{y_1}) \hat{W}_a\\\no
&&+\frac{1+a^2}{2}\bigg(\frac{(y_1-r_2)\p_a \hat{W}_6\p_{y_1}\hat{W}_1}{D_0^{\hat{W}_6}(r_2-r_s-\hat{W}_6)}- \frac{(\bar{B}-\frac12 \bar{U}^2(y_1))\p_{a}\hat{W}_4}{\gamma \bar{K} y_1 \bar{U}(y_1)} + (\frac{1}{D_0^{\hat{W}_6}}-\frac1{y_1})\p_a \hat{W}_1\bigg)\\\no
&&\quad+\frac{(1+a^2)\{\hat{W}_5-\bar{U}(D_0^{\hat{W}_6})\hat{W}_1-\frac{1}{2}\sum_{j=1,a,\tau}\hat{W}_j^2\}}{2\gamma D_0^{\hat{W}_6}(\bar{U}(D_0^{\hat{W}_6})+\hat{W}_1)(\bar{K}+\hat{W}_4)}(\p_a\hat{W}_4+\frac{(y_1-r_2)\p_{y_1}\hat{W}_4\p_a \hat{W}_6}{r_2-r_s-\hat{W}_6})\\\no
&&\quad+\frac{(1+a^2)}{2aD_0^{\hat{W}_6}(\bar{U}(D_0^{\hat{W}_6})+\hat{W}_1)}\frac{\bar{B}-\frac12 \bar{U}^2(D_0^{\hat{W}_6})}{\gamma (\bar{K}+\hat{W}_4)}(\p_a\hat{W}_4+\frac{(y_1-r_2)\p_{y_1}\hat{W}_4\p_a \hat{W}_6}{r_2-r_s-\hat{W}_6}).
\ee
and
\be\no
&&G_a=\frac{\tilde{\omega}_1 \hat{W}_{a}}{\bar{U}(D_0^{\hat{W}_6})+\hat{W}_1}+ \frac{1+a^2}{2a D_0^{\hat{W}_6}(\bar{U}(D_0^{\hat{W}_6})+\hat{W}_1)}(\p_{\tau} \hat{W}_5+\frac{(y_1-r_2)\p_{\tau}\hat{W}_6 \p_{y_1}\hat{W}_5}{r_2-r_s-\hat{W}_6})\\\no
&&\q+\cos\tau H_2+\sin\tau H_3+\frac{\bar{B}-\frac12 \bar{U}^2(y_1)}{\gamma \bar{K} \bar{U}(y_1)} \frac{(1+a^2)}{2y_1 a}\p_{\tau}\{R_4(\hat{{\bf W}},\hat{W}_6)\},\\\no
&&H_a=\cos\tau H_2 +\sin\tau H_3\\\no
&&= \frac{\hat{W}_6\p_{y_1}\hat{W}_{\tau}}{r_2-r_s-\hat{W}_6}+(\frac{1}{D_0^{\hat{W}_6}}-\frac1{y_1})\hat{W}_{\tau}-\frac{1+a^2}{2 a D_0^{\hat{W}_6}}\frac{(y_1-r_2)\p_{\tau}\hat{W}_6 \p_{y_1}\hat{W}_1}{r_2-r_s-\hat{W}_6}\\\no
&&\quad-\frac{(1+a^2)(\hat{W}_5-\bar{U}(D_0^{\hat{W}_6})\hat{W}_1-\frac{1}{2}\sum_{j=1,a,\tau}\hat{W}_j^2)} {2\gamma a D_0^{\hat{W}_6}(\bar{U}(D_0^{\hat{W}_6})+\hat{W}_1) (\bar{K}+\hat{W}_4)}(\p_{\tau}\hat{W}_4+\frac{(y_1-r_2)\p_{\tau}\hat{W}_6\p_{y_1}\hat{W}_4}{r_2-r_s-\hat{W}_6})\\\no
&&\quad-\frac{1+a^2}{2a}(\frac{1}{D_0^{\hat{W}_6}}-\frac1{y_1})\p_{\tau}\hat{W}_1+ \frac{\bar{B}-\frac12 \bar{U}^2(y_1)}{\gamma \bar{K} \bar{U}(y_1)} \frac{1}{2y_1}\frac{1+a^2}{a}\p_{\tau} \hat{W}_4\\\no
&&\quad-\frac{(1+a^2)(\bar{B}-\frac12 \bar{U}^2(D_0^{\hat{W}_6}))}{2\gamma a D_0^{\hat{W}_6}(\bar{U}(D_0^{\hat{W}_6})+\hat{W}_1)(\bar{K}+\hat{W}_4)}(\p_{\tau} \hat{W}_4+\frac{(y_1-r_2)\p_{\tau} \hat{W}_6}{r_2-r_s-\hat{W}_6}\p_{y_1}\hat{W}_4).
\ee
Then \eqref{g123} will follow from a direct computation using the compatibility condition \eqref{cp100} and \eqref{ad4}.

\framebox{{\bf The verification of \eqref{g5cp}.}} By definition and \eqref{d350}, one has
\be\no
\p_a G_5(y_1,1,\tau)=\p_a \{G_0(\hat{{\bf W}},\hat{W}_6)\}(y_1,1,\tau),\ \forall (y_1,\tau)\in\Sigma_w.
\ee

In the $(y_1,a,\tau)$ coordinates, one has
\be\no
&&G_0(\hat{{\bf W}},\hat{W}_6)\\\no
&&= \mathbb{F}(\hat{{\bf W}},\hat{W}_6)-\b(d_1(D_0^{\hat{W}_6})D_1^{\hat{W}_6} \hat{W}_1-d_1(y_1)\p_{y_1} \hat{W}_1\b)-(d_2(D_0^{\hat{W}_6})-d_2(y_1)) \hat{W}_1\\\no
&&\quad-2\bigg(\frac{1}{D_0^{\hat{W}_6}}-\frac{1}{y_1}\bigg)\hat{W}_1-\frac{1+a^2}{2D_0^{\hat{W}_6}}\frac{(y_1-r_2)(\p_{y_1}\hat{W}_a\p_a +\frac{1}{a}\p_{y_1}\hat{W}_{\tau}\p_{\tau})\hat{W}_6}{r_2-r_s-\hat{W}_6}\\\label{g01}
&&\quad-\bigg(\frac{1}{D_0^{\hat{W}_6}}-\frac{1}{y_1}\bigg)\bigg\{\frac{1+a^2}{2}(\p_a \hat{W}_a+\frac1a\hat{W}_a+\frac1a\p_{\tau}\hat{W}_{\tau})-a\hat{W}_a\bigg\}
\ee
and
\be\no
&&\bar{c}^2(D_0^{\hat{W}_6})\mathbb{F}(\hat{{\bf W}},\hat{W}_6)=-\bigg((\gamma-1)\hat{W}_5-(\gamma+1)\bar{U}(D_0^{\hat{W}_6})\hat{W}_1\bigg)D_{1}^{\hat{W}_6} \hat{W}_1\\\no
&&\quad-\frac{(\gamma-1)}{D_0^{\hat{W}_6}}\bigg(\hat{W}_5-\bar{U}(D_0^{\hat{W}_6})\hat{W}_1-\frac{1}{2}\sum_{j=1,a,\tau} W_j^2\bigg)\times\bigg(2\hat{W}_1-a\hat{W}_a\\\no
&&\quad+\frac{1+a^2}{2}\bigg\{\p_a \hat{W}_a+\frac{\hat{W}_a} a+\frac1a\p_{\tau}\hat{W}_{\tau}+\frac{y_1-r_2}{r_2-r_s-\hat{W}_6}(\p_{y_1}\hat{W}_a\p_a +\frac{\p_{y_1}\hat{W}_{\tau}}a\p_{\tau})\hat{W}_6\bigg\} \bigg)\\\no
&&\quad+ \frac{\bar{U}'(D_0^{\hat{W}_6})+D_1^{\hat{W}_6}\hat{W}_1}{2}\big(\hat{W}_1^2+(\gamma-1)\sum_{j=1,a,\tau}\hat{W}_j^2)\big)+\frac{(\gamma-1)\bar{U}(D_0^{\hat{W}_6})}{ D_0^{\hat{W}_6}}\sum_{j=1,a,\tau} \hat{W}_j^2\\\no
&&\quad+ (\bar{U}(D_0^{\hat{W}_6})+\hat{W}_1)\sum_{j=a,\tau} \hat{W}_j D_1^{\hat{W}_6} \hat{W}_j + \frac{(1+a^2)(\bar{U}(D_0^{\hat{W}_6})+\hat{W}_1)}{2D_0^{\hat{W}_6} }\\\no
&&\quad\quad\quad\times \bigg\{(\hat{W}_a\p_a +\frac{\hat{W}_{\tau} } a\p_{\tau})\hat{W}_1+\frac{(y_1-r_2)\p_{y_1}\hat{W}_1}{r_2-r_s-\hat{W}_6}(\hat{W}_a\p_a+a^{-1}\hat{W}_{\tau}\p_{\tau}) \hat{W}_6\bigg\}\\\no
&&\q+ \frac{(1+a^2)}{4D_0^{\hat{W}_6}}\b\{(\hat{W}_a\p_a+a^{-1}\hat{W}_{\tau}\p_{\tau})(\hat{W}_a^2+\hat{W}_{\tau}^2)\\\no
&&\quad\quad\quad+\frac{(y_1-r_2)\p_{y_1}(\hat{W}_a^2+\hat{W}_{\tau}^2)}{r_2-r_s-\hat{W}_6}(\hat{W}_a\p_a+a^{-1}\hat{W}_{\tau}\p_{\tau}) \hat{W}_6\bigg\}.
\ee
Utilizing the compatibility condition \eqref{cp100}, a direct calculation yields \eqref{g5cp}. We remark that $(\p_a^2 \hat{W}_a+\p_a \hat{W}_a)(y_1,1,\tau)=0$ is used to verify the last term in \eqref{g01}.

{\bf Acknowledgment.} Weng is supported by National Natural Science Foundation of China (Grants No. 12071359, 12221001). The author would like to express his sincere gratitude to Prof. Zhouping Xin for his insightful discussions, comments and constant supports. 

{\bf Data Availability Statement.} No data, models or code were generated or used during the study.

{\bf Conflict of interest.} The author states that there is no conflict of interests.


\end{document}